\documentclass[12pt]{amsart}
\usepackage{amsmath}
\usepackage{amssymb,amsmath,amscd}
\usepackage{bm}
\usepackage{amsfonts}
\usepackage{epsf}
\usepackage{graphicx}
\input epsf%
\usepackage{multirow}

\usepackage{geometry}
\geometry{a4paper,left=3cm, right=3cm}

\usepackage{stmaryrd}

\usepackage{soul} 
\usepackage{color, xcolor}

\newtheorem{theorem}{Theorem}[section]
\newtheorem{lemma}[theorem]{Lemma}

\newtheorem{remark}[theorem]{Remark}

\newcommand{\eps}{\varepsilon}

\newcommand{\T}{\mathcal{T}}
\newcommand{\M}{\mathcal{M}}
\newcommand{\R}{\mathbb{R}}

\newcommand{\Sym}{\mathbb{S}}
\newcommand{\dv}{\operatorname{div}}
\newcommand{\Span}{\text{span}}

\newcommand{\norm}[1]{\left\lVert#1\right\rVert}

\newcommand{\jumpE}[2]{\left\llbracket#1\right\rrbracket_{#2}}
\newcommand{\dx}[1]{\mathrm{d}#1}
\newcommand{\abs}[1]{\left|#1\right|}
\newcommand{\ip}[2]{\left(#1\right)_{#2}}

\newcommand{\trace}[2]{\left.#1\right|_{#2}}

\numberwithin{equation}{section}

\long\def\comment#1{}

\usepackage{tikz}
\usetikzlibrary{calc}
\usetikzlibrary{angles,quotes}

\usepackage{subcaption}
\usepackage{overpic}                
\usepackage{tikz-3dplot}

\definecolor{color1}{RGB}{52, 152, 219}   
\definecolor{color2}{RGB}{230, 126, 34}   
\definecolor{color3}{RGB}{155, 89, 182}   
\definecolor{color4}{RGB}{46, 204, 113}   
\definecolor{color5}{RGB}{231, 76, 60}    
\definecolor{color6}{RGB}{241, 196, 15}   

\begin{document}
	
	\title[]{
		Lower order mixed elements for the linear elasticity problem in 2D and 3D
	}
	
	
	\author {Jun Hu}
	\address{LMAM and School of Mathematical Sciences, Peking University,
		Beijing 100871, P. R. China.\\ Chongqing Research Institute of Big Data, Peking University, Chongqing 401332, P. R. China. hujun@math.pku.edu.cn}
	
	\author{Rui Ma}
	\address{Beijing Institute of Technology,
		Beijing 100081, P. R. China. rui.ma@bit.edu.cn}
	
	\author{Yuanxun Sun}
	\address{LMAM and School of Mathematical Sciences, Peking University,
		Beijing 100871, P. R. China. 1901112048@pku.edu.cn}

	\thanks{The first author was supported by NSFC
		project 12288101, the second author was supported by NSFC project 12301466.}
	
	\begin{abstract}
		
		\vskip 15pt
		
		In this paper, we construct two lower order mixed elements for the linear elasticity problem in the Hellinger-Reissner formulation, one for the 2D problem and one for the 3D problem, both on macro-element meshes. The discrete stress spaces enrich the analogous $P_k$ stress spaces in {[J. Hu and S. Zhang, arxiv, 2014, J. Hu and S. Zhang, Sci. China Math., 2015]} with simple macro-element bubble functions, and the discrete displacement spaces are discontinuous piecewise $P_{k-1}$ polynomial spaces, with $k=2,3$, respectively. Discrete stability and optimal convergence is proved by using the macro-element technique. As a byproduct, the discrete stability and optimal convergence of the $P_2-P_1$ {mixed} element in [L. Chen and X. Huang, SIAM J. Numer. Anal., 2022] in 3D is proved on another macro-element mesh. For the mixed element in 2D, an $H^2$-conforming composite element is constructed and an exact discrete elasticity sequence is presented. {Numerical experiments confirm the theoretical results.}

		\vskip 15pt
		
		\noindent{\bf Keywords.}{
			linear elasticity, lower order mixed elements, macro-element, $H^2$-conforming composite element, discrete elasticity sequence.}
		
		\vskip 15pt
		
		\noindent{\bf AMS subject classifications.}
		{ 65N30, 74B05.}
	\end{abstract}
	\maketitle
	
	\section{Introduction}

	For a bounded polygonal domain $\Omega\subset\R^n$ with $n=2$ {(resp. a bounded polyhedral domain for $n=3$)}, the linear elasticity {problem} based on the Hellinger-Reissner variational principle reads: Given $f\in V$, find $(\sigma,u)\in \Sigma\times V$  such that
	{\begin{equation}
		\left\{
		\begin{aligned}
		(A\sigma,\tau)_{\Omega}+(\dv\tau,u)_{\Omega}&=0,\\
		(\dv\sigma,v)_{\Omega}&=(f,v)_{\Omega}
		\end{aligned}
		\right.
		\label{continuousP}
		\end{equation}}holds for any $(\tau,v)\in\Sigma\times V$. Here $\Sigma:=H({\rm div},\Omega;\mathbb {S})$ is {the space of symmetric stress fields}, $V:=L^2(\Omega;\mathbb{R}^n)$ is {the space of displacement fields}, the compliance tensor $A(x):\Sym\rightarrow\Sym$ is bounded and symmetric positive
	definite uniformly for $x\in\Omega$, with $\Sym:=\R^{n\times n}_{\text{sym}}$ being the set of symmetric tensors. The space $H(\dv,\Omega;\Sym)$ is defined by
	\begin{equation*}
	H(\dv,\Omega;\Sym):=\{\tau\in L^2(\Omega;\Sym):\ \dv\tau\in L^2(\Omega;\R^n)\},
	\end{equation*}
	equipped with the norm
	\begin{equation*}
		\norm{\tau}_{{H(\dv)}}^2:=\norm{\tau}_{0}^2+
		\norm{\dv\tau}_{0}^2.
		\end{equation*}
		
	It is a challenge to design stable discretizations for problem \eqref{continuousP}, due to the additional symmetry constraint on the stress tensor {$\sigma$}. Earlier works focused on designing composite elements, in which the stress is discretized on a more refined triangulation than the displacement, see \cite{Arnold1984, Johnson-Mercier}. In \cite{Arnold-Winther-conforming}, Arnold and Winther {proposed} the first family of mixed finite elements based on polynomial shape function spaces. From then on, various stable mixed elements with strong or weak symmetry have been constructed for both the 2D problem and the 3D problem, see \cite{Adams,Arnold-Awanou,Arnold-Awanou-Winther,Arnold-Falk-Winther,Boffi-Brezzi-Fortin,CGG}.

	In \cite{HuZhang2014a,HuZhang2015tre}, Hu and Zhang constructed the stable pair for triangular meshes {in 2D} and tetrahedral meshes {in 3D}. The lowest polynomial {degrees} of the mixed elements for the 2D and 3D problems are $k=3$ and $k=4$, respectively. The generalization to any dimension can be found in \cite{Hu2015trianghigh}, and the lowest polynomial  {degree} is $k=n+1$. Mixed elements with reduced number of degrees of freedom (DoFs) were constructed in \cite{HuZhang2015trianglow}. The stress spaces therein enrich the $P_k$ ($2\leqslant k\leqslant n$) analogous elements in \cite{Hu2015trianghigh} {with} face bubble functions of degree $n+1$.
	
	Macro-element techniques are widely used for designing {mixed} finite elements for the linear elasticity problem. For the 2D problem, the {first order} composite element proposed in \cite{Johnson-Mercier} and the {higher order} composite elements in \cite{Arnold1984} use the Hsieh-Clough-Tocher (HCT) meshes for the discretization for stresses, where each triangle consists of three subtriangles. For the 3D problem, lower order mixed elements merely using $P_k$ ($k<4$) polynomials for the stress were recently constructed on macro-element meshes, such as $k\geqslant 2$ for {HCT} splits \cite{Alfeld} and $k\geqslant 1$ for Worsey-Farin splits \cite{Worsey}, and each macro-element therein consists of four and twelve elements, respectively. In \cite{Gong}, mixed elements {using $P_k$ ($k=n-1,n$) polynomials for the stress were constructed on the HCT meshes} for both the 2D problem and the 3D problem. {Recently in \cite{3DP3}, {a} mixed element merely using $P_3$ polynomials for the stress was constructed on tetrahedral meshes, under some mild mesh conditions.}

	{This paper proposes a new} mixed element of  {degree} $k=2$ for the 2D problem and {a} mixed {element} of degree $k=3$ for the 3D problem to complete {the} family of elements {in \cite{Hu2015trianghigh}} on some  {simple} macro-element meshes,  {where} each macro-element consists of four elements for both cases, and the meshes can be obtained by {uniformly bisecting an initial simplicial mesh twice.}  {The} construction of the new mixed elements is  {motivated by the idea of} \cite{HuMa} {that it} is possible to partially relax the $C^0$ continuity of discrete stresses at some vertices {owing to} the special mesh structures. The new mixed methods can be easily implemented, since they have the same local stiffness matrices as the {lower order analogy of the} mixed methods in {\cite{HuZhang2014a,HuZhang2015tre}}. One only needs to rearrange part of the DoFs. The discrete inf-sup condition is proved by {applying} the two-step method in \cite{Hu2015trianghigh,HuZhang2014a,HuZhang2015tre} to the {macro-elements}. The two-step method consists of the construction of a stable interpolation with partial commuting diagram property \cite[Lemma~3.1]{Hu2015trianghigh}, and the characterization of the divergence of the $H(\dv;\Sym)$-bubble functions \cite[Theorem~2.2]{Hu2015trianghigh}. In this paper, these results are applied to {some} macro-element meshes: the first step utilizes the geometric structure of the macro-elements to construct the interpolation operator {to deal with} lower degree polynomials; the second step uses some new macro-element bubble functions to deal with the orthogonal complement space of the rigid motion space. Similar techniques have been used in \cite{Rectangle} to prove the discrete stability of rectangular elements for the pure traction boundary problem. With this, the discrete inf-sup condition can be established as well as the discrete stability and optimal convergence.

	The first step can be easily verified, and the difficulty lies in the second step: the macro-element bubble function spaces need to enrich the piecewise $H(\dv;\Sym)$-bubble spaces so that the divergence operator maps them onto the orthogonal complement space of the rigid motion space. This is quite technical for complicated macro-elements, {especially} for the {mixed} element of degree $k=2$ in 3D. Therefore, another approach is presented to {avoid tedious analysis}.  {With} the matrix Piola transform {\cite{Arnold-Winther-conforming,Johnson-Mercier}}, the analysis  {is carried out} on a reference macro-element  {by} {computing} the rank of certain matrices. This can be carried out directly on computers. Following this computer-assisted proof, the second step of the two-step method can be established, and the  {$P_2-P_1$ mixed element} of \cite{ChenHuang2022} is proved to be stable, on another macro-element mesh,  {where} each macro-element consists of twelve tetrahedral elements.


	For $\Omega\subset\R^2$, the airy function $J:\ H^2(\Omega)\rightarrow H(\dv,\Omega;\Sym)$ is defined as
	\begin{equation*}
	Jq:=\begin{bmatrix}
	\partial_{yy}^2q&-\partial_{xy}^2q\\
	-\partial_{xy}^2q&\partial_{xx}^2q
	\end{bmatrix},
	\end{equation*}
	{which plays an essential role in the elasticity complex in 2D.} Several two dimensional $H\left(\dv,\Omega;\Sym\right)$ conforming finite element spaces can be placed in exact finite element {elasticity} sequences starting from the related conforming $H^2$ elements. For example, the composite element of \cite{Johnson-Mercier} is related to the Clough-Tocher composite $H^2$ element, the composite elements in \cite{Arnold1984} are related to the higher order composite $H^2$ elements, the Arnold-Winther elements and the elements in \cite{HuZhang2014a} are related to the Argyris element \cite{Arnold-Winther-conforming, Nodal}. {This paper presents a new} $H^2$ conforming finite element related to the newly proposed lower order element on the {aforementioned} macro-element mesh {and constructs an exact discrete sequence}. {Interested readers can find some} finite element elasticity complexes in three dimensions, on tetrahedral meshes \cite{ChenHuang3D2022}, cuboid meshes \cite{HuLiangLin}, the HCT splits \cite{Alfeld}, and the Worsey-Farrin splits \cite{Worsey}.

	The rest of the paper is organized as follows. In Section 2, some notation and results are prepared. In Section 3, the mixed elements of  {degree} $k=2$ for the 2D problem and  {degree} $k=3$ for the 3D problem are {presented} on some macro-element meshes. {The discrete stability} and optimal convergence of the $P_2-P_1$ {mixed} element of \cite{ChenHuang2022}  {is} proved on macro-element meshes for the 3D problem. In Section 4, an $H^2$ conforming finite element space related to the 2D {mixed element} is constructed and an exact sequence is presented. In Section 5, numerical results are {provided} to confirm the theoretical analysis.


	Throughout this paper, let $H^k(\omega;{W})$ denote the Sobolev space consisting of functions with domain  {$\omega\subset\R^2$ or $\omega\subset\R^3$}, taking values in the finite-dimensional vector space ${W}$ ($\R$,  {$\R^2$ or $\R^3$,} or $\Sym$), and with all derivatives of order at most $k$ square-integrable. Let $H_0(\dv,\omega;\Sym)$ denote the space of functions in $H(\dv,\omega;\Sym)$ that has vanishing trace, \emph{i.e.}
		\begin{equation*}
		H_0(\dv,\omega;\Sym):=\{\tau\in H(\dv,\omega;\Sym):\ \trace{\tau\bm{n}}{\partial \omega}=0\}.
		\end{equation*}
		Here $\bm{n}$ denotes the outward unit normal vector of $\omega$. Let $\norm{\cdot}_{k,\omega}$ and $\abs{\cdot}_{k,\omega}$ be the norm and the seminorm of $H^k(\omega;{W})$, and let $\left(\cdot,\cdot\right)_{\omega}$ denote the inner product of $L^2(\omega)$. When $\omega=\Omega$, the norm and the seminorm are simply denoted by $\norm{\cdot}_k$ and $\abs{\cdot}_k$. Let $P_k(\omega;{W})$ denote the space of polynomials of degree at most $k$, taking values in the space ${W}$.  {When there is no ambiguity, an element is called $P_k$ if its shape function space contains polynomials of degree at most $k$.}

	\section{{Preliminaries and notation}}

	This section gives the general definition of a class of mixed finite elements on the macro-element meshes for the linear elasticity problem \eqref{continuousP}. Some notation and results are prepared for later use.


	\subsection{{Some notation}}
	\label{subsec: notation}

	Assume that the domain $\Omega$ {can} be {divided} into macro-elements, with each macro-element being the union of a fixed number of simplicial elements. Let $\M_h$ denote the collection of all the macro-elements. For an edge $E$ {in 2D} (resp. a face $F$ {in 3D}), let $K_+$ and $K_-$ be the elements sharing $E$ (resp. $F$). For a piecewise smooth function $v$, define the jump of $v$ across $E$ (resp. $F$) to be {$\jumpE{v}{E(\text{resp. $F$})}:=v_+-v_-$}. Note that the jump defined here depends on the choice of $K_+$. For any vertex {$X$}, let {$\lambda_{X}$} be the {continuous} piecewise linear  {nodal} basis function associated with {$X$}. For any given integer $k$ and the Lagrange node $X$, let $\Phi_X$ be the $P_k$ Lagrange basis function associated with $X$. For a given macro-element $M\in\M_h$, {the space of rigid motions reads:}
	 {\begin{equation*}
		RM\left(M\right):=\left\{v\in H^1\left(M\right):\ \eps\left(v\right):=\frac{1}{2}(\nabla v+(\nabla v)^T)=0\right\}.
		\end{equation*}}
		Rigid motion functions are linear functions taking the following form:
		\begin{equation}
		v=\begin{cases}
		\begin{bmatrix}
		a_1\\
		a_2\\
		\end{bmatrix}+
		\begin{bmatrix}
		0&b_{12}\\
		-b_{12}&0\\
		\end{bmatrix}
		\begin{bmatrix}
		x\\
		y\\
		\end{bmatrix},\quad\text{$n=2,$}\\ \\
		\begin{bmatrix}
		a_1\\
		a_2\\
		a_3
		\end{bmatrix}+
		\begin{bmatrix}
		0&b_{12}&b_{13}\\
		-b_{12}&0&b_{23}\\
		-b_{13}&-b_{23}&0
		\end{bmatrix}
		\begin{bmatrix}
		x\\
		y\\
		z
		\end{bmatrix},\quad\text{$n=3$}.
		\end{cases}
		\label{eq: RM}
		\end{equation}

		Define the following piecewise $P_{k-1}$ function space
			\begin{equation*}
			V_M:=\{v\in L^2\left(M;\R^n\right):\ \trace{v}{K}\in P_{k-1}\left(K;\R^n\right),\ \forall K\subset M\}.
			\end{equation*}
			Define the {$L^2$} orthogonal complement space of $RM(M)$ with respect to $V_M$ {as}
			\begin{equation*}
			\begin{aligned}
			RM^{\perp}(M)&=\{v\in V_M:\ (v,w)_{{M}}=0,\ \forall w\in RM(M)\}.
			\end{aligned}
			\end{equation*}
		

		\subsection{Piola transform}
		\label{subsec: Piola}
		
		{This subsection introduces the Piola transform for the {$H(\dv;\Sym)$-tensors} and the  rigid motion functions. These transforms will be used in the scaling argument and the proof of the discrete inf-sup condition for the $P_2-P_1$ element in Section \ref{subsec: 3DP2}.

			Let $\widehat{K}$ be a reference element. Let $\mathcal{F}:\ \widehat{K}\rightarrow K$ be an affine isomorphism of the form $\mathcal{F}\widehat{x}=B\widehat{x}+b$. The gradient operator, the divergence operator and the {symmetric gradient} operator on the reference element are denoted by $\widehat{\nabla}$, $\widehat{\dv}$ and $\widehat{\eps}$, respectively.

			Given a differentiable function $\widehat{v}:\ \widehat{K}\rightarrow\R^n$, the covariant transform in \cite[Eq.~(2.1.82)]{Boffi-Brezzi-Fortin2013} defines $v:\ K\rightarrow\R^n$  {as follows}
			\begin{equation}
			v(x):=B^{-T}\widehat{v}\left(\widehat{x}\right).
			\label{transform-vector}
			\end{equation}
			Note $\nabla\mathcal{F}^{-1}=B^{-1}$. This, \eqref{transform-vector} and the chain rule give
			\begin{equation*}
			\nabla v=B^{-T}\widehat{\nabla}\widehat{v}B^{-1},
			\end{equation*}
			{and}
			\begin{equation*}
			\eps\left(v\right)=B^{-T}\widehat{\eps}\left(\widehat{v}\right)B^{-1}.
			\end{equation*}
			This relationship implies that the transform \eqref{transform-vector} sets up a one-to-one correspondence between $RM\left(\widehat{K}\right)$ and $RM\left(K\right)$. In general, the space $RM(K)$ is exactly the lowest order element of the first kind of N\'{e}d\'{e}lec element $ND_0(K)$ \cite[Eq.~(2.3.66)]{Boffi-Brezzi-Fortin2013}, and the transform \eqref{transform-vector} sets up a one-to-one correspondence between $ND_k\left(\widehat{K}\right)$ and $ND_k(K)$ for each $k\geqslant0$, see \cite[Remark 2.3.6]{Boffi-Brezzi-Fortin2013}.

			Given a differentiable function $\widehat{\tau}:\ \widehat{K}\rightarrow\Sym$, the matrix Piola transform in \cite[Eq.~(5.8)]{Johnson-Mercier} and \cite[Eq.~(4.3)]{Arnold-Winther-conforming} defines $\tau:\ K\rightarrow\Sym$  {as follows}
			\begin{equation}
			\tau(x):=B\widehat{\tau}(\widehat{x})B^T.
			\label{transform-matrix}
			\end{equation}
			Note that the divergence relationship  {\cite[Eq.~(4.4)]{Arnold-Winther-conforming} reads}
			\begin{equation}
			\dv\tau\left(x\right)=B\widehat{\dv}\widehat{\tau}\left(\widehat{x}\right).
			\label{eq: div}
			\end{equation}
			This implies that the transform \eqref{transform-matrix} sets up a one-to-one correspondence between $H\left(\widehat{\dv},\widehat{K};\Sym\right)$ and $H\left(\dv,K;\Sym\right)$.

%
%
%
%

			The following fact will be later used to characterize the divergence of macro-element bubble function spaces.

			\begin{remark}
				\label{rmk: RM-determined}
				A rigid motion function $v$ in $K$ can be determined by its value on an arbitrary $(n-1)$-dimensional face $F\subset K$.  {In fact,} the invariance of $RM(K)$ under covariant transforms implies that $F$ can be assumed to lie on the $x$-axis for the case $n=2$, or on the $x-y$ plane for the case $n=3$.  {In fact, suppose} that $v\in RM(K)$ vanishes on $F$. The expression \eqref{eq: RM} implies
				\begin{align*}
				&a_1=a_2=\cdots=a_n=0,\\
				&b_{ij}=0,\quad\forall 1\leqslant i<j\leqslant n.
				\end{align*}
				Therefore, any $v\in RM(K)$ that vanishes on $F$ must be zero.
		\end{remark}}
		


		\subsection{The mixed finite element spaces}
		\label{subsec: MFEspace}
		This subsection introduces some general notation for macro-elements and gives the definition of the mixed {finite} element spaces. More details for the macro-elements and the $H(\dv;\Sym)$-bubble function space on macro-elements will be given in Section \ref{sec: lower-order} for polynomial degree $2\leqslant k\leqslant n$, respectively.

		The discrete stress spaces on {macro-element meshes} take the following form
		\begin{equation}
		\begin{aligned}
		\Sigma_{k,h}&=\{\sigma\in \Sigma:\ \sigma=\sigma_c+\sigma_b,\ \sigma_c\in H^1(\Omega;\Sym),\\
		&\quad\quad\trace{\sigma_c}{K}\in P_k(K;\Sym),\  {\forall K\in\T_h,\ \trace{\sigma_b}{M}\in\Sigma_{M,k,b},} \ \forall M\in\M_h\}.
		\end{aligned}
		\end{equation}
		 Here {$\Sigma_{M,k,b}\subset H_0\left(\dv,M;\Sym\right)$} denotes a generic macro-element bubble function space  {consisting of some piecewise polynomials on $M$ of degree at most $k$}, and will be specifically clarified in Section~\ref{sec: lower-order} for  {$n=2,3$ and $2\leqslant k\leqslant n$}, respectively. 
		
		Recall  {the $H(\dv;\Sym)$-bubble function space $\Sigma_{K,k,b}$ on element $K$} from \cite{Hu2015trianghigh}:
		\begin{equation}
		\Sigma_{K,k,b}=\{\tau\in P_k(K;\Sym):\ \trace{\tau\bm{n}}{\partial K}=0\},
		\label{def: Hdiv-bubble}
		\end{equation}
		{which satisfies}
		\begin{equation}
		\dv\Sigma_{K,k,b}=RM^{\perp}\left(K\right).
		\label{eq: divbubble}
		\end{equation}
		Here $RM^{\perp}\left(K\right)$ is the {$L^2$} orthogonal complement space of $RM\left(K\right)$ with respect to $P_{k-1}\left(K;\R^n\right)$. At least, the space $\Sigma_{M,k,b}$ contains $\Sigma_{K,k,b}$ for all sub-elements {$K\subset M$}.

		The discrete displacement {space is}
		\begin{equation}
		V_{k,h}=\{v\in V:\ \trace{v}{K}\in P_{k-1}(K;\R^n),\ \forall K\in\T_h\}.
		\label{eq: V-space}
		\end{equation}

		{The mixed finite element approximation of \eqref{continuousP} reads: Find $\left(\sigma_h,u_h\right)\in\Sigma_{k,h}\times V_{k,h}$ such that
			\begin{equation}
			\left\{
			\begin{aligned}
			(A\sigma_h,\tau_h)_{\Omega}+(\dv\tau_h,u_h)_{\Omega}&=0,\\
			(\dv\sigma_h,v_h)_{\Omega}&=(f,v_h)_{\Omega}
			\end{aligned}
			\right.
			\label{discretizedP}
			\end{equation}
			holds for any $(\tau_h,v_h)\in\Sigma_{k,h}\times V_{k,h}$.}
		
		\subsection{Discrete stability}
		\label{subsec: stability}
		The discrete stability of {\eqref{discretizedP}} relies on the K-ellipticity and the discrete inf-sup condition \cite{Boffi-Brezzi-Fortin2013}. These two conditions read as follows:
		\begin{enumerate}
			\item \textbf{K-ellipticity condition:} There exists a constant $C>0$ such that
			\begin{equation}
			\left(A\sigma_h,\sigma_h\right)\geqslant C\norm{\sigma_h}_{H(\dv)}^2
			\label{cond: K-ellipticity}
			\end{equation}
			for any $\sigma_h\in Z_h:=\left\{\tau_h\in\Sigma_{k,h}:\ \left(\dv\tau_h,v_h\right)=0,\ \forall v_h\in V_{k,h}\right\}$.
			
			\item \textbf{Discrete inf-sup condition:} There exists a constant $C>0$ such that
			\begin{equation}
			{\inf_{v_h\in V_{k,h}}\sup_{\tau_h\in\Sigma_{k,h}}}\frac{\left(\dv\tau_h,v_h\right)}{\norm{\tau_h}_{H(\dv)}\norm{v_h}_0}\geqslant C.
			\label{cond: inf-sup}
			\end{equation}
		\end{enumerate}

		\section{Lower order elements for the 2D problem and the 3D problem}
		\label{sec: lower-order}
		
		In this section, {the} mixed elements of  {degree} $k=2$ for the 2D problem and  {degree} $k=3$ for the 3D problem are {introduced} on some macro-element meshes. Macro-element bubble function spaces are constructed by utilizing {the geometric structures} of the  {macro-elements}. Besides, another approach is presented to prove the {discrete stability} and optimal convergence of the  {$P_2-P_1$ element} of \cite{ChenHuang2022} on another macro-element mesh.

		\subsection{The mixed element of  {degree} $k=2$ for the 2D problem}
		\label{subsec: 2DP2-element}
		
		For {$n=2$}, each macro-element consists of four sub-elements, {see} Figure~\ref{fig:2dp2p1}. For each macro-element $M$, apart from the bubble functions {in \eqref{def: Hdiv-bubble} on each of the four sub-elements $K\subset M$,} six globally continuous functions vanishing on $\partial M$ and three new macro-element bubble functions are added to $\Sigma_{M,2,b}$.


		\subsubsection{The definition of a macro-element}
		\label{subsubsec: 2DP2-macroelement}
		
		See Figure~\ref{fig:2dp2p1} for an illustration of the macro-element. The macro-element $M=x_0x_1x_2$ consists of four  {sub-elements}, namely $K_1=x_1m_1m_2$, $K_2=x_0m_1m_2$, $K_3=x_0m_1m_3$, $K_4=x_2m_1m_3$. The four  {sub-elements} are separated by three interior edges, namely $E_1=m_1m_2$, $E_2=m_1x_0$ and $E_3=m_1m_3$. Here $m_i$ is the midpoint of the edge of $M$ for $i=1,2,3$, and $d_i$ is the midpoint of $E_i$ for $i=1,2,3$.

		\begin{figure}[h]
			\centering
			\begin{tikzpicture}[line width=0.5pt,scale=1]
			\coordinate (center) at (0,0);
			\coordinate (V1) at (2,3);
			\coordinate (V2) at (-2,0);
			\coordinate (V3) at (3,0);
			
			\coordinate (M1) at (0.5,0);
			\coordinate (M2) at (0,1.5);
			\coordinate (M3) at (2.5,1.5);
			\coordinate (C1) at (0.25,0.75);
			\coordinate (C2) at (1.25,1.5);
			\coordinate (C3) at (1.5,0.75);
			
			\coordinate (K1) at (-0.5,0.2);
			\coordinate (K2) at (0.83333,1.5);
			\coordinate (K3) at (1.66666,1.5);
			\coordinate (K4) at (2,0.2);

			
			\draw[fill=black] {(V1)} circle (1pt);
			\draw[fill=black] {(V2)} circle (1pt);
			\draw[fill=black] {(V3)} circle (1pt);

			\draw (V1) -- (V2) -- (V3) -- cycle;
			\draw (M2) -- (M1) -- (M3);
			\draw (M1) -- (V1);
			
			\draw[fill=black] {(M1)} circle (1pt);
			\draw[fill=black] {(M2)} circle (1pt);
			\draw[fill=black] {(M3)} circle (1pt);
			\draw[fill=black] {(C1)} circle (1pt);
			\draw[fill=black] {(C2)} circle (1pt);
			\draw[fill=black] {(C3)} circle (1pt);
			
			\node [above =0.1cm] at (V1) {$x_0$}; 
			\node [below =0.1cm] at (V2) {$x_1$}; 
			\node [below =0.1cm] at (V3) {$x_2$}; 
			\node [below right=0.1cm] at (M1) {$m_1$}; 
			\node [left =0.2cm] at (M2) {$m_2$}; 
			\node [right=0.02cm] at (M3) {$m_3$}; 
			\node [below =0.1cm] at (C1) {$d_1$}; 
			\node [below left =0.05cm] at (C2) {$d_2$}; 
			\node [below =0.05cm] at (C3) {$d_3$};

			\coordinate (E1) at (0.2,0.95);
			\coordinate (E2) at (1.4,1.8);
			\coordinate (E3) at (1.8,1);
			\node at (E1) {$E_1$}; 
			\node at (E2) {$E_2$}; 
			\node at (E3) {$E_3$};

			\node [above] at (K1) {$K_1$}; 
			\node [above] at (K2) {$K_2$}; 
			\node [above right] at (K3) {$K_3$}; 
			\node [above right] at (K4) {$K_4$}; 
			
			\end{tikzpicture}
			
			\caption{The macro-element for the mixed element of  {degree} $k=2$ in 2D.}
			\label{fig:2dp2p1}
		\end{figure}
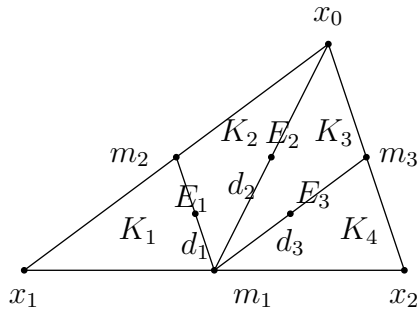

		\subsubsection{The macro-element bubble functions}
		\label{sec: 2Dmacrobubble}
		
		 {Recall the Lagrange basis function $\Phi_X$ defined in Section \ref{subsec: notation}.} The macro-element bubble functions fall into three categories:
		\begin{enumerate}
			\item Piecewise $H(\dv;\Sym)$-bubble functions: $\trace{\tau}{K}\in \Sigma_{K,2,b}$ for each $K\subset M$;
			\item Globally continuous functions that vanish on $\partial M$: $\Phi_{d_1}(\bm{t}\bm{n}^T+\bm{n}\bm{t}^T)$ and $\Phi_{d_1}\bm{n}\bm{n}^T$ with the tangential vector $\bm{t}$ and the normal vector $\bm{n}$ of the interior edge $E_1$. And four other bubble functions  {can be} defined for $d_2$ and $d_3$ in the similar manner;
			\item Globally discontinuous functions that have vanishing normal components on $\partial M$: $\left(\trace{\Phi_{m_1}}{M}\right)\bm{t}\bm{t}^T$ with the tangential vector $\bm{t}$ of the edge $x_1x_2$. And two other bubble functions can be defined for $m_2$ and $m_3$ in the similar manner.
		\end{enumerate}
		The linear span of the macro-element bubble functions {in (1)-(3)} defines the macro-element bubble function space $\Sigma_{M,2,b}$.

		\begin{remark}
			The macro-element bubble functions {in (1)-(2)} are contained in the $P_2$ analogous element of \cite{Hu2015trianghigh}. In particular, the macro-element bubble functions {in (2)} are associated with the DoFs in the interior of the edges. The macro-element bubble functions {in (3)} are proposed by utilizing {the collinear edges in} the macro-elements and the techniques {as} in \cite{HuMa}.
		\end{remark}
		
		\subsubsection{Discrete stability}
		
		Note that $\dv\Sigma_{2,h}\subseteq V_{2,h}$. This implies the $K$-ellipticity condition {\eqref{cond: K-ellipticity}}. Therefore it suffices to check that the discrete inf-sup condition {\eqref{cond: inf-sup}} holds. The discrete inf-sup condition is verified in two steps {as} in \cite[Theorem 2.2, Lemma 3.1]{Hu2015trianghigh}. {In the following,} Lemma \ref{Lem: interpolation2D} constructs an interpolation operator $I_h:H^1(\Omega;\Sym)\rightarrow\Sigma_{2,h}\cap H^1(\Omega;\Sym)$, {and} Lemma \ref{Lem: RMperp-2D} proves the {identity} $\dv\Sigma_{M,2,b}=RM^{\perp}(M)$.
		
		\begin{lemma}
			There exists an interpolation operator $I_h:H^1(\Omega;\Sym)\rightarrow\Sigma_{2,h}\cap H^1(\Omega;\Sym)$ such that for any macro-element edge $E$,
			\begin{equation}
			\int_{E}\big((\tau-I_h\tau)\bm{n}_E\big)\cdot w\dx{s}=0,\ \forall\tau\in H^1(\Omega;\Sym),\ \forall w\in RM(M^+\cup M^-).
			\label{req1}
			\end{equation}
			Here $\bm{n}_E$ denotes the unit normal vector of the edge $E$, $M^{+}$ and $M^-$ denote  the two macro-elements sharing $E$. {Besides, it holds}
			\begin{equation}
			\norm{I_h\tau}_{H(\dv)}\leqslant{C\norm{\tau}_{1}}.
			\label{ineq: Ih2D_stability}
			\end{equation}
			
			\label{Lem: interpolation2D}
		\end{lemma}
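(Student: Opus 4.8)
The plan is to reduce the claim to an $H^1$-stability estimate and then to build $I_h$ as a base quasi-interpolation corrected on a few edge degrees of freedom. Since $I_h\tau\in\Sigma_{2,h}\cap H^1(\Omega;\Sym)$ is piecewise $P_2$ and globally $H^1$-conforming, differentiation is bounded from $H^1$ into $L^2$, so $\norm{I_h\tau}_{H(\dv)}\leqslant C\norm{I_h\tau}_1$ and it suffices to prove $\norm{I_h\tau}_1\leqslant C\norm{\tau}_1$. I would work with the standard degrees of freedom of the $H^1$-conforming $P_2$ symmetric-tensor element of \cite{Hu2015trianghigh}: the three components of $\tau$ at each Lagrange vertex of $\T_h$ together with edge-interior functionals on each edge of $\T_h$. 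On a macro-element edge $E=x_1x_2$ the sub-edges $x_1m_1$ and $m_1x_2$ carry the extra node $m_1$ and the two sub-edge midpoints, so the normal trace $(I_h\tau)\bm{n}_E$ is a continuous piecewise $P_2$ vector field on $E$ with several edge-interior parameters left free once the vertex values at $x_1,x_2$ are fixed.

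First I would fix the vertex and genuinely interior degrees of freedom by a Scott--Zhang type quasi-interpolation $Q_h$. Because $H^1(\Omega)\not\hookrightarrow C^0(\Omega)$ in two dimensions, point values of $\tau$ at vertices are not defined, so the nodal functionals must be replaced by local averaged (dual-basis) functionals over a patch; this standard device renders $Q_h$ well defined on $H^1(\Omega;\Sym)$ and $H^1$-stable, $\norm{Q_h\tau}_1\leqslant C\norm{\tau}_1$, while reproducing continuous piecewise $P_2$ fields. At this stage $Q_h\tau$ already lies in $\Sigma_{2,h}\cap H^1(\Omega;\Sym)$ and is globally continuous.

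Next I would add a correction $R_h\tau$ acting only through the edge-interior degrees of freedom attached to macro-element edges, so that continuity and the vertex values secured by $Q_h$ are untouched: an edge-interior DoF on a macro-element edge is shared by the two macro-elements $M^+,M^-$, so modifying it changes both traces identically and preserves global $H^1$-conformity. On each macro-element edge $E$ I would impose the three conditions in \eqref{req1}: choosing a basis $\{w_1,w_2,w_3\}$ of $RM(M^+\cup M^-)$, Remark~\ref{rmk: RM-determined} guarantees that the restrictions $w_i|_E$ are linearly independent, so \eqref{req1} amounts to matching the three scalar moments $\int_E((Q_h\tau+R_h\tau)\bm{n}_E)\cdot w_i\dx{s}=\int_E(\tau\bm{n}_E)\cdot w_i\dx{s}$. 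Existence and uniqueness of the correction then reduces to the invertibility of the $3\times 3$ moment matrix obtained by pairing three chosen edge-interior normal-trace shape functions (supported near $m_1$ and the sub-edge midpoints of $E$) against $w_1,w_2,w_3$; this is a fixed computation on a reference macro-element edge, in which the extra node $m_1$ supplied by the macro-element split is exactly what provides enough normal-trace freedom to realize all three rigid-motion moments.

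Finally, stability follows by a scaling argument: mapping $E$ and the adjacent elements to a reference configuration by the affine map $\mathcal{F}$ and using the matrix Piola transform \eqref{transform-matrix}--\eqref{eq: div} together with the covariant transform \eqref{transform-vector} for the rigid motions, the correction is controlled by the data of the moment conditions, $\norm{R_h\tau}_1\leqslant C\norm{\tau-Q_h\tau}_1\leqslant C\norm{\tau}_1$, after summing local contributions with a finite-overlap argument. I expect the main obstacle to be the correction step: one must verify that the three rigid-motion moment functionals remain linearly independent on the available edge-interior normal-trace shape functions (solvability of the local $3\times 3$ system) and that the correction can be localized to macro-element edges without disturbing continuity or the vertex values fixed by $Q_h$; the residual point-value difficulty for $H^1$ data is dealt with once and for all by the Scott--Zhang regularization.
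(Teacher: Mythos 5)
Your proposal follows essentially the same route as the paper: a Scott--Zhang base interpolant corrected only through the edge-interior degrees of freedom on each macro-element edge (which preserves $H^1$-conformity), with solvability of a local square moment system against rigid motions and a Piola/scaling plus finite-overlap argument for stability. The only differences are cosmetic: the paper solves the stronger component-wise $2\times2$ systems using $\phi_1=\lambda_{x_0}\lambda_{m_2}$, $\phi_2=\lambda_{x_1}\lambda_{m_2}$ and verifies the adjoint injectivity explicitly (a $p\in P_1(E)$ with $\int_E\phi_l p\,\dx{s}=0$ vanishes at the two sub-edge midpoints, hence $p=0$), whereas you defer your $3\times3$ invertibility check to a reference computation; also your intermediate bound $\norm{R_h\tau}_1\leqslant C\norm{\tau-Q_h\tau}_1$ should be stated in the weighted form $\abs{R_h\tau}_{1}+h^{-1}\norm{R_h\tau}_{0}\leqslant C\left(\abs{\tau-Q_h\tau}_1+h^{-1}\norm{\tau-Q_h\tau}_0\right)$, after which the Scott--Zhang approximation property yields the final estimate $C\abs{\tau}_1$ exactly as in the paper.
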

		\begin{proof}
			Let $I_1:H^1(\Omega;\Sym)\rightarrow\Sigma_{2,h}\cap H^1(\Omega;\Sym)$ be the Scott-Zhang interpolation \cite{ScottZhang} {with}
			\begin{equation}
			{\sum_{K\in\T_h}\left(\norm{\nabla\left(\tau-I_1\tau\right)}_{0,K}^2+h_K^{-2}\norm{\tau-I_1\tau}_{0,K}^2\right)\leqslant C\abs{\tau}_1^2,\quad\forall \tau\in H^1(\Omega;\Sym).}
			\label{ineq: ScottZhang}
			\end{equation}
			As in \cite[Lemma 3.1]{Hu2015trianghigh}, $I_h\tau$ can be obtained by modifying $I_1\tau$ with {some} macro-element edge bubble functions, {i.e. there exists $\delta\in\Sigma_{2,h}$ such that}
			\begin{equation}
			\int_{E}(\delta \bm{n}_E)\cdot w\dx{s}=\int_{E}\big((\tau-I_1\tau)\bm{n}_E\big)\cdot w\dx{s},\ \forall w\in RM(M^+\cup M^-).
			\label{eq: delta2D}
			\end{equation}
			Take {the} edge $E=x_0x_1$ as an example. Choose two quadratic Lagrange basis functions: $\phi_1=\lambda_{x_0}\lambda_{m_2}$ and $\phi_2=\lambda_{x_1}\lambda_{m_2}$, see Figure \ref{fig:2dp2p1}. Let {$\{T_{j}\}_{1\leqslant j\leqslant3}$} be the canonical basis of $\Sym$. Set
			\begin{equation*}
			{\delta=\sum_{j=1}^{3}\sum_{l=1}^{2}a_{j}^{(l)}T_{j}\phi_l.}
			\end{equation*} 
			{Since $\trace{w}{E}\in P_1\left(E\right)$}, it suffices to show that for any {$1\leqslant j\leqslant3$, $l=1,2$,} and given {$f_{j}\in L^2(E)$}, there exists {$a_{j}^{(l)}$} such that for any $p\in P_1(E)$,
			\begin{equation*}
			{\sum_{l=1}^{ {2}}a_{j}^{(l)}\int_{E}\phi_lp\dx{s}=\int_{E}f_{j}p\dx{s}.}
			\end{equation*}
			This is true since the adjoint system
			\begin{equation}
			{\int_{E}\phi_lp\dx{s}=0,\quad l=1,2}
			\label{eq: adjoint}
			\end{equation}
			admits only zero solution $p=0$ in $P_1(E)$. {Actually, \eqref{eq: adjoint} implies that $p$ vanishes at the midpoints of $x_0m_2$ and $x_1m_2$, and this shows $p=0$.}
			
			 {The cases of the other} edges can be similarly discussed. {This allows to define the modification $\delta$ to satisfy \eqref{eq: delta2D}.} The matrix Piola transform \eqref{transform-matrix}, the local trace inequality, and a standard scaling argument as in \cite{Arnold-Winther-conforming,Hu2015trianghigh} show that
			\begin{equation*}
			\sum_{K\in\T_h}\left(\norm{\nabla\delta}_{0,K}^2+h_K^{-2}\norm{\delta}_{0,K}^2\right)\leqslant
			C\sum_{K\in\T_h}\left(\norm{\nabla\left(\tau-I_1\tau\right)}_{0,K}^2+h_K^{-2}\norm{\tau-I_1\tau}_{0,K}^2\right).
			\end{equation*}
			This and the approximation property \eqref{ineq: ScottZhang} lead to
			\begin{equation}
			\sum_{K\in\T_h}\left(\norm{\nabla\delta}_{0,K}^2+h_K^{-2}\norm{\delta}_{0,K}^2\right)\leqslant C\abs{\tau}_1^2.
			\label{ineq: estimate-delta}
			\end{equation}
			Finally, \eqref{ineq: ScottZhang} and \eqref{ineq: estimate-delta} prove the bound \eqref{ineq: Ih2D_stability}.

		\end{proof}

		
		\begin{lemma}
			For the macro-element bubble function space {$\Sigma_{M,2,b}$}, it holds
			\begin{equation}
			\dv\Sigma_{M,2,b}=RM^{\perp}(M).
			\end{equation}
			\label{Lem: RMperp-2D}
		\end{lemma}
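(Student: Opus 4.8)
The plan is to establish the two inclusions separately: $\dv\Sigma_{M,2,b}\subseteq RM^\perp(M)$ is elementary, while the reverse inclusion will follow from a dimension count combined with a linear-independence argument for the nine added macro-element bubbles. For the forward inclusion I would take any $\tau\in\Sigma_{M,2,b}$ and $w\in RM(M)$ and integrate by parts on $M$:
\begin{equation*}
(\dv\tau,w)_M=-(\tau,\eps(w))_M+\int_{\partial M}(\tau\bm{n})\cdot w\,\dx{s}=0,
\end{equation*}
since $\eps(w)=0$ and $\tau\bm{n}=0$ on $\partial M$ because $\Sigma_{M,2,b}\subset H_0(\dv,M;\Sym)$. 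As $\tau$ is piecewise $P_2$ its divergence lies in $V_M$, so $\dv\tau\in RM^\perp(M)$.

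For the reverse inclusion I would argue by dimensions. With $n=2$, $V_M=\bigoplus_{K\subset M}P_1(K;\R^2)$ has dimension $24$ and $\dim RM(M)=3$, hence $\dim RM^\perp(M)=21$. The category-(1) bubbles already give, by \eqref{eq: divbubble},
\begin{equation*}
\dv\Big(\bigoplus_{K\subset M}\Sigma_{K,2,b}\Big)=\bigoplus_{K\subset M}RM^\perp(K)\subseteq RM^\perp(M),
\end{equation*}
a subspace of dimension $4\times 3=12$. It therefore suffices to show that the six functions of category (2) and the three functions of category (3)---nine in total, matching the gap $21-12=9$---have divergences that are linearly independent modulo $\bigoplus_{K\subset M}RM^\perp(K)$; by the matching dimensions this is equivalent to the asserted surjectivity.

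To read off the class of a divergence in the quotient $RM^\perp(M)/\bigoplus_{K}RM^\perp(K)$ I would use that, for $\tau\in H_0(\dv,M;\Sym)$ and $w\in RM(K)$,
\begin{equation*}
(\dv\tau,w)_K=\int_{\partial K}(\tau\bm{n})\cdot w\,\dx{s},
\end{equation*}
where only the interior edges $E_1,E_2,E_3$ contribute, so that the class is determined by the normal traces of $\tau$ on these edges. Assuming a combination $\tau$ of the nine bubbles satisfies $\dv\tau\in\bigoplus_{K}RM^\perp(K)$, all these moments vanish. The category-(2) bubbles attached to $E_i$ have normal trace supported only on $E_i$ (since $\Phi_{d_i}$ vanishes on the other interior edges), equal to $\Phi_{d_i}\bm{t}_i$ and $\Phi_{d_i}\bm{n}_i$, and enter the two triangles sharing $E_i$ with opposite signs, whereas the category-(3) bubbles enter through $\Phi_{m_i}$ on the interior edges incident to $m_i$. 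Testing against a basis of $RM(K)$ on each sub-triangle and invoking Remark \ref{rmk: RM-determined}, that a rigid motion is determined by its trace on one edge, I expect the resulting $9\times 9$ system to be nonsingular, forcing every coefficient to vanish.

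The main obstacle is precisely this nonsingularity: one must verify that the nine interface moments are independent. I would carry out the verification on a reference macro-element, transporting the bubbles by the matrix Piola transform \eqref{transform-matrix} together with the divergence identity \eqref{eq: div} so that the check is affine-invariant, and then exploit the splitting of each interface trace into its normal and tangential parts to reduce the determinant to a product of small, explicitly nonsingular blocks.
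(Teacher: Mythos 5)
Your framework is sound, and it is in fact the exact dual of the paper's argument: the forward inclusion is identical, and your ``nine classes independent in the $9$-dimensional quotient $RM^{\perp}(M)/\bigoplus_K RM^{\perp}(K)$'' is equivalent to the paper's statement that a piecewise rigid motion whose interface moments against all nine added bubbles vanish must have zero jumps across $E_1,E_2,E_3$. But the decisive step is precisely the nonsingularity you only announce with ``I expect'': that claim \emph{is} the mathematical content of the lemma, and your sketch contains no substitute for the two ingredients the paper uses to prove it. Concretely, on $E_1$ the paper combines the two $d_1$-bubbles with the $m_2$-bubble and the span identity $\Span\left\{\trace{\Phi_{d_1}}{E_1},\trace{\Phi_{m_2}}{E_1}\right\}=\lambda_{m_2}P_1(E_1)$ to conclude $\jumpE{v}{E_1}\cdot\bm{t}=0$ with $\bm{t}$ the tangent of $x_0x_1$ (note: the tangential moments control only the component along this \emph{fixed} direction, not along $E_1$ itself), and then uses the geometric fact that $E_1=m_1m_2$ is a midline parallel to $x_0x_2$, hence \emph{not} parallel to $x_0x_1$, to force the rotation coefficient $b_{12}$ in \eqref{eq: RM} to vanish; only then is the jump constant and killed by the $\Phi_{d_1}$-moments. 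A bare count of ``two normal-type plus one tangential moment against three-dimensional rigid-motion traces'' does not by itself yield an invertible block; the midline geometry enters essentially.

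A second concrete caveat concerns your hoped-for ``product of small, explicitly nonsingular blocks'': the moment matrix is \emph{not} block diagonal per interior edge, because all three interior edges meet at the hub $m_1$, so the category-(3) bubble $\left(\trace{\Phi_{m_1}}{M}\right)\bm{t}\bm{t}^T$ has nonvanishing normal trace on each of $E_1,E_2,E_3$. You can only get a block \emph{triangular} structure by ordering the edges as the paper implicitly does: treat $E_1$ and $E_3$ first (their diagonal blocks involve only the bubbles supported on a single edge, namely $d_1,m_2$ and $d_3,m_3$), and then $E_2$ with $d_2$ and $m_1$, at which point the couplings of $\Phi_{m_1}$ into $E_1$ and $E_3$ multiply already-vanished jumps. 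Your Piola-invariance observation is correct and does legitimize a reference-element rank computation in the spirit of Lemma~\ref{Lem: reference} (the defining conditions of all nine bubbles are preserved under \eqref{transform-matrix}, since $\bm{t}^T\tau\bm{t}$-type conditions are affinely invariant), so your route can be completed either by that computation or by supplying the span identity and non-parallelism argument above; as written, however, the proof is incomplete exactly where the lemma is nontrivial.
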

		\begin{proof}
			Since $\Sigma_{M,2,b}\subset H_0(\dv,M;\Sym)$, integration by parts shows $\dv\Sigma_{M,2,b}\subseteq RM^{\perp}(M)$. {It} suffices to show that: if for some $v\in RM^{\perp}(M)$,
			\begin{equation*}
			\int_{M}\dv\tau\cdot v\dx{x}=0,\ \forall\tau\in\Sigma_{M,2,b},
			\end{equation*}
			then it holds $v\in RM(M)$. Recall the $H(\dv)$ bubble functions in {\eqref{def: Hdiv-bubble}.} By choosing $\tau\in\Sigma_{K_j,2,b}$ for each element $K_j\subset M$ with $1\leqslant j\leqslant4$,  {the equation \eqref{eq: divbubble}} implies that $\trace{v}{K_j}\in RM(K_j)$. {Remark~\ref{rmk: RM-determined} shows} that a rigid motion function in $\R^2$ is determined by its value on an edge. It remains to use the macro-element bubble functions {in (2)-(3)} to deal with the jump {of $v$} across $E_i$ with $i=1,2,3$. {Let $\bm{t}$ be the tangential vector  {of} the edge $x_0x_1$. Let $\bm{t_i}$ and $\bm{n_i}$ be the tangential vector and the normal vector of the interior edge $E_i$, with $i=1,2,3$.} Integration by parts on each $K_j$ and  {the} summation over $1\leqslant j\leqslant 4$  {yields}
			\begin{equation*}
			0=\int_{M}\dv\tau\cdot v\dx{x}=\sum_{j=1}^{4}\int_{K_j}\dv\tau\cdot v\dx{x}=\sum_{i=1}^{3}\int_{E_i}(\tau \bm{n}_i)\cdot{\left(\jumpE{v}{E_i}\right)}\dx{s}.
			\end{equation*}
			 {The choices of} $\tau$ as $(\trace{\Phi_{m_2}}{M})\bm{t}\bm{t}^T$, $\Phi_{d_1}(\bm{t_1}\bm{n_1}^T+\bm{n_1}\bm{t_1}^T)$ and $\Phi_{d_1}\bm{n_1}\bm{n_1}^T$  {yield}
			\begin{equation}
			\left\{
			\begin{aligned}
			&\int_{E_1}\Phi_{d_1}{\left(\jumpE{v}{E_1}\right)}\dx{s}=0,\\
			&\int_{E_1}\Phi_{m_2}{\left(\jumpE{v}{E_1}\right)}\cdot\bm{t}\dx{s}=0.
			\end{aligned}
			\right.
			\label{eq: RMperp}
			\end{equation}
			Note  {that} ${\left(\jumpE{v}{E_1}\right)}\cdot\bm{t}\in P_1(E_1)$ and {$$\Span\left\{\trace{\Phi_{d_1}}{E_1},\trace{\Phi_{m_2}}{E_1}\right\}=\lambda_{m_2}P_1(E_1).$$}
				These facts imply ${\left(\jumpE{v}{E_1}\right)}\cdot\bm{t}=0$. Recall the expression given in \eqref{eq: RM}: $${\jumpE{v}{E_1}}=\begin{bmatrix}
				a_1+b_{12}y\\
				a_2-b_{12}x
				\end{bmatrix}.$$
				This shows that $b_{12}\left(\bm{t}\cdot\begin{bmatrix}
				y\\
				-x
				\end{bmatrix}\right)$ is constant on $E_1$. Since $x_0x_1$ and $E_1$ are not parallel, $\bm{t}\cdot\begin{bmatrix}
				y\\
				-x
				\end{bmatrix}$ is not a constant on $E_1$. This implies $b_{12}=0$ and ${\jumpE{v}{E_1}}$ is a constant vector. This and the first equation of \eqref{eq: RMperp} lead  {to} ${\jumpE{v}{E_1}}=0$. Similar arguments show ${\jumpE{v}{E_3}}=0$ and ${\jumpE{v}{E_2}}=0$. Therefore $v\in RM(M)$. This establishes $\dv\Sigma_{M,2,b}=RM^{\perp}(M)$.
		\end{proof}
		
		With the preparations above, the discrete inf-sup condition can be established.
		
		\begin{theorem}
			The discrete inf-sup condition holds, \emph{i.e.} there exists a positive constant $C$ independent of $h$ such that
			\begin{equation}
			\inf_{v_h\in V_{2,h}}\sup_{\tau_h\in\Sigma_{2,h}}\frac{(\dv\tau_h,v_h)}{\norm{\tau_h}_{H(\dv)}\norm{v_h}_{0}}\geqslant C.
			\label{eq: inf-sup-2D}
			\end{equation}
			\label{Thm: inf-sup-2D}
		\end{theorem}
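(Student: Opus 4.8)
The plan is to establish \eqref{eq: inf-sup-2D} by the two-step (Fortin-type) construction of \cite{Hu2015trianghigh}, using Lemma \ref{Lem: interpolation2D} and Lemma \ref{Lem: RMperp-2D} as its two ingredients. Since the $K$-ellipticity has already been noted, it suffices to show that for every $v_h\in V_{2,h}$ there exists $\tau_h\in\Sigma_{2,h}$ with $\dv\tau_h=v_h$ and $\norm{\tau_h}_{H(\dv)}\leqslant C\norm{v_h}_0$; testing the bilinear form with this $\tau_h$ then forces the ratio in \eqref{eq: inf-sup-2D} to be at least $\norm{v_h}_0/\norm{\tau_h}_{H(\dv)}\geqslant 1/C$.

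First I would invoke the stability of the continuous problem together with the $H^1$-regularity of a right inverse of the divergence on the polygonal domain $\Omega$: there exists $\tau_1\in H^1(\Omega;\Sym)$ with $\dv\tau_1=v_h$ and $\norm{\tau_1}_1\leqslant C\norm{v_h}_0$. Applying Lemma \ref{Lem: interpolation2D} produces $I_h\tau_1\in\Sigma_{2,h}\cap H^1(\Omega;\Sym)$ with $\norm{I_h\tau_1}_{H(\dv)}\leqslant C\norm{\tau_1}_1\leqslant C\norm{v_h}_0$ by \eqref{ineq: Ih2D_stability}. The crucial point is to identify the rigid-motion part of $\dv I_h\tau_1$ on each macro-element. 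For $M\in\M_h$ and any $w\in RM(M)$, integration by parts on $M$ together with $\eps(w)=0$ gives
\[
\ip{\dv I_h\tau_1-\dv\tau_1,\,w}{M}=\int_{\partial M}\big((I_h\tau_1-\tau_1)\bm{n}\big)\cdot w\,\dx{s}=0,
\]
where each boundary-edge term vanishes because $w$ extends to a global rigid motion in $RM(M^+\cup M^-)$ and hence the edge-moment property \eqref{req1} applies. Therefore the $L^2(M)$-projection of $\dv I_h\tau_1$ onto $RM(M)$ equals $v_{RM}:=Q_M v_h$, so I may write $\trace{\dv I_h\tau_1}{M}=v_{RM}+s_M$ with $s_M\in RM^{\perp}(M)$.

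For the second step I would correct by macro-element bubbles. By Lemma \ref{Lem: RMperp-2D}, since $(v_h|_M-v_{RM})-s_M\in RM^{\perp}(M)$, there is $\tau_{b,M}\in\Sigma_{M,2,b}\subset H_0(\dv,M;\Sym)$ with $\dv\tau_{b,M}=(v_h|_M-v_{RM})-s_M$. A scaling argument via the matrix Piola transform \eqref{transform-matrix}–\eqref{eq: div} and the finite dimensionality of the bubble space on a reference macro-element give $\norm{\tau_{b,M}}_{H(\dv,M)}\leqslant C\norm{\dv\tau_{b,M}}_{0,M}\leqslant C(\norm{v_h}_{0,M}+\norm{s_M}_{0,M})$. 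Setting $\tau_b=\sum_{M}\tau_{b,M}$, the vanishing normal traces yield $\tau_b\in\Sigma_{2,h}$, and $\tau_h:=I_h\tau_1+\tau_b$ satisfies on each $M$ the identity $\dv\tau_h=v_{RM}+s_M+(v_h-v_{RM})-s_M=v_h$, hence $\dv\tau_h=v_h$ globally. Summing the local bounds and using $\norm{s_M}_{0,M}\leqslant\norm{I_h\tau_1}_{H(\dv)}$ yields $\norm{\tau_b}_{H(\dv)}\leqslant C\norm{v_h}_0$, so $\norm{\tau_h}_{H(\dv)}\leqslant C\norm{v_h}_0$, which completes the argument.

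The genuinely hard parts of the whole scheme — the commuting edge-moment property with $H^1$-stability and the surjectivity $\dv\Sigma_{M,2,b}=RM^{\perp}(M)$ — are exactly the content of the two preceding lemmas, so the theorem itself is essentially their assembly. Within that assembly, the one estimate requiring care is the scaling bound for the local right inverse $\tau_{b,M}$: because $\dv:\Sigma_{M,2,b}\to RM^{\perp}(M)$ is onto, it admits a bounded right inverse on a fixed reference macro-element, and I would transport this to a general $M$ through \eqref{transform-matrix} and \eqref{eq: div}, tracking how $\norm{\cdot}_{H(\dv)}$ scales so that shape-regularity delivers an $h$-independent constant. The only input external to the lemmas is the classical $H^1$ right inverse of $\dv$ on $\Omega$, which I would simply cite.
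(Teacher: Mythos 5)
Your proposal is correct and follows essentially the same route as the paper's proof: a continuous right inverse $\tau_1$ of $\dv$, the interpolation $I_h$ of Lemma \ref{Lem: interpolation2D} whose edge-moment property \eqref{req1} shows via integration by parts that $v_h-\dv I_h\tau_1\in RM^{\perp}(M)$ on each macro-element, a bubble correction from Lemma \ref{Lem: RMperp-2D}, and the triangle inequality. Your decomposition $\trace{\dv I_h\tau_1}{M}=v_{RM}+s_M$ is just a more verbose restatement of this (your corrected divergence $(v_h-v_{RM})-s_M$ equals $v_h-\dv I_h\tau_1$ exactly), and your explicit Piola-transform scaling bound for $\tau_{b,M}$ fills in the step the paper handles by citing \cite[Theorem 3.1]{Hu2015trianghigh}.
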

		
		\begin{proof}
			The well-posedness of the continuous problem (see \cite{Arnold-Winther-conforming} for the 2D case) shows {that} for any $v_h\in V_h$, there exists $\tau_1\in H^1(\Omega;\Sym)$ such that
			\begin{equation*}
			\dv\tau_1=v_h,\ \norm{\tau_1}_{1}\leqslant C\norm{v_h}_{0}.
			\end{equation*}
			Let the interpolation operator $I_h$ be defined as in Lemma \ref{Lem: interpolation2D}. For any $w\in RM(M)$,  {an} integration by parts yields
			\begin{equation}
			\begin{aligned}
			&\int_{M}(v_h-\dv I_h\tau_1)\cdot w\dx{x}=\int_{M}\dv(\tau_1- I_h\tau_1)\cdot w\dx{x}\\
			&=-\int_{M}(\tau_1- I_h\tau_1):\eps(w)\dx{x}+\int_{\partial M}\big((\tau_1- I_h\tau_1)\bm{n}\big)\cdot w\dx{s}=0.
			\end{aligned}
			\end{equation}
			This implies $v_h-\dv I_h\tau_1\in RM^{\perp}(M)$. As in \cite[Theorem 3.1]{Hu2015trianghigh}, $\tau_2\in\Sigma_{2,h}$ can be chosen such that $\trace{\tau_2}{M}\in\Sigma_{M,2,b}$ for each macro-element $M$, and that
			\begin{equation}
			\begin{aligned}
			\dv\tau_2&=v_h-\dv I_h\tau_1,\\
			\norm{\tau_2}_{{H(\dv)}}&\leqslant C\norm{v_h-\dv I_h\tau_1}_{{0}}.
			\end{aligned}
			\label{tau2}
			\end{equation}
			Finally, let $\tau_h=I_h\tau_1+\tau_2$. This and \eqref{tau2} show that $\dv\tau_h=v_h$, and {the following estimate holds}
			\begin{equation}
			\norm{\tau_2}_{{H(\dv)}}\leqslant C\norm{v_h-\dv I_h\tau_1}_{{0}}\leqslant C\big(\norm{v_h}_{{0}}+\norm{I_h\tau_1}_{{H(\dv)}}\big).
			\label{ineq1}
			\end{equation}
			The stability of $I_h$ {shows}
			\begin{equation}
			\norm{I_h\tau_1}_{{H(\dv)}}\leqslant C\norm{\tau_1}_{{1}}\leqslant C\norm{v_h}_{{0}}.
			\label{ineq2}
			\end{equation}{The combination of} \eqref{ineq1} and \eqref{ineq2} yields
			\begin{equation}
			\norm{\tau_h}_{{H(\dv)}}\leqslant C\norm{v_h}_{{0}}.
			\end{equation}{This proves} the discrete inf-sup condition {\eqref{eq: inf-sup-2D}}.

		\end{proof}
		
		\subsubsection{{Error} estimates}
		
		By using the discrete inf-sup condition \eqref{eq: inf-sup-2D}, similar arguments as in \cite{Hu2015trianghigh,HuZhang2015tre} lead to the following error estimates for the pair  {$\Sigma_{2,h}\times V_{2,h}$.}
		
		\begin{theorem}
			Let $(\sigma,u)\in\Sigma\times V$ {solve \eqref{continuousP}} and $(\sigma_h,u_h)\in\Sigma_{2,h}\times V_{2,h}$ {solve \eqref{discretizedP}}. Then the following estimate holds
			\begin{equation}
			\norm{\sigma-\sigma_h}_{H(\dv)}+\norm{u-u_h}_{0}\leqslant Ch^2\left(\norm{\sigma}_{3}+\norm{u}_{2}\right),
			\end{equation}
			 {provided that $\sigma\in H^3\left(\Omega;\Sym\right)$ and $u\in H^2\left(\Omega;\R^2\right)$.}
			\label{Thm: 2Destimate}
		\end{theorem}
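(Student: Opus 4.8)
The plan is to follow the standard error analysis for mixed methods built on the K-ellipticity condition \eqref{cond: K-ellipticity} and the discrete inf-sup condition \eqref{eq: inf-sup-2D} (Theorem \ref{Thm: inf-sup-2D}), while exploiting the crucial inclusion $\dv\Sigma_{2,h}\subseteq V_{2,h}$. Let $P_h:\ V\to V_{2,h}$ be the $L^2$ orthogonal projection. Since \eqref{continuousP} holds in particular for discrete test functions, subtracting \eqref{discretizedP} gives the error equations
\begin{equation*}
(A(\sigma-\sigma_h),\tau_h)_{\Omega}+(\dv\tau_h,u-u_h)_{\Omega}=0,\qquad (\dv(\sigma-\sigma_h),v_h)_{\Omega}=0
\end{equation*}
for all $(\tau_h,v_h)\in\Sigma_{2,h}\times V_{2,h}$. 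Because $\dv\sigma_h\in\dv\Sigma_{2,h}\subseteq V_{2,h}$, the second identity together with $\dv\sigma=f$ yields the commuting relation $\dv\sigma_h=P_h\dv\sigma$, so that the divergence error is controlled directly by the $L^2$ approximation of $P_h$ on piecewise $P_1$ fields:
\begin{equation*}
\norm{\dv(\sigma-\sigma_h)}_0=\norm{\dv\sigma-P_h\dv\sigma}_0\leqslant Ch^2\abs{\dv\sigma}_2\leqslant Ch^2\norm{\sigma}_3.
\end{equation*}

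The heart of the argument is the construction of a Fortin-type projection $\Pi_h\sigma\in\Sigma_{2,h}$ satisfying the commuting property $\dv\Pi_h\sigma=P_h\dv\sigma$ together with $\norm{\sigma-\Pi_h\sigma}_0\leqslant Ch^3\norm{\sigma}_3$. I would set $\Pi_h\sigma:=I_h\sigma+\sigma_b$, where $I_h$ is the interpolation of Lemma \ref{Lem: interpolation2D} and $\sigma_b$ is a macro-element bubble correction. The key point is that $P_h\dv\sigma-\dv I_h\sigma\in RM^{\perp}(M)$ on each macro-element $M$: for $w\in RM(M)$ one has $(P_h\dv\sigma,w)_M=(\dv\sigma,w)_M$ since $w|_M\in V_M$, and integration by parts gives
\begin{equation*}
(\dv\sigma-\dv I_h\sigma,w)_M=-(\sigma-I_h\sigma,\eps(w))_M+\int_{\partial M}\big((\sigma-I_h\sigma)\bm{n}\big)\cdot w\dx{s}=0,
\end{equation*}
because $\eps(w)=0$ and the boundary moments vanish by property \eqref{req1}. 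By Lemma \ref{Lem: RMperp-2D} there exists $\sigma_b\in\Sigma_{M,2,b}$ with $\dv\sigma_b=P_h\dv\sigma-\dv I_h\sigma$, and a scaling argument through the matrix Piola transform \eqref{transform-matrix}--\eqref{eq: div} furnishes the uniform bound $\norm{\sigma_b}_{H(\dv)}\leqslant C\norm{P_h\dv\sigma-\dv I_h\sigma}_0$. Since $\Pi_h$ reproduces global $P_2$ tensors (then $I_h\sigma=\sigma$ and $\sigma_b=0$), the Bramble--Hilbert lemma delivers the claimed approximation bound.

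With $\Pi_h$ in hand, the stress estimate follows from ellipticity on the discrete kernel. As $\dv(\sigma_h-\Pi_h\sigma)=\dv\sigma_h-P_h\dv\sigma=0$, the field $\sigma_h-\Pi_h\sigma$ lies in $Z_h$; choosing the divergence-free test function $\tau_h=\sigma_h-\Pi_h\sigma$ in the first error equation shows $(A(\sigma-\sigma_h),\sigma_h-\Pi_h\sigma)_{\Omega}=0$, whence \eqref{cond: K-ellipticity} gives $\norm{\sigma_h-\Pi_h\sigma}_{H(\dv)}\leqslant C\norm{\sigma-\Pi_h\sigma}_0$. Combining this with the triangle inequality and the two approximation estimates above yields $\norm{\sigma-\sigma_h}_{H(\dv)}\leqslant Ch^2\norm{\sigma}_3$ and, in particular, the superconvergent bound $\norm{\sigma-\sigma_h}_0\leqslant Ch^3\norm{\sigma}_3$.

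For the displacement I would estimate $P_hu-u_h$ by the discrete inf-sup condition \eqref{eq: inf-sup-2D}: for any $\tau_h\in\Sigma_{2,h}$, using $\dv\tau_h\in V_{2,h}$ and the first error equation,
\begin{equation*}
(\dv\tau_h,P_hu-u_h)_{\Omega}=(\dv\tau_h,u-u_h)_{\Omega}=-(A(\sigma-\sigma_h),\tau_h)_{\Omega}\leqslant C\norm{\sigma-\sigma_h}_0\norm{\tau_h}_{H(\dv)},
\end{equation*}
so that $\norm{P_hu-u_h}_0\leqslant C\norm{\sigma-\sigma_h}_0\leqslant Ch^3\norm{\sigma}_3$. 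The triangle inequality together with $\norm{u-P_hu}_0\leqslant Ch^2\norm{u}_2$ then closes the displacement estimate, and adding the two bounds proves the theorem. I expect the main obstacle to be the second step: showing that the correction $\sigma_b$ exists in $\Sigma_{M,2,b}$ and is bounded \emph{uniformly in $h$}, which is precisely where Lemma \ref{Lem: RMperp-2D}, the moment condition \eqref{req1} of $I_h$, and the Piola scaling are indispensable. Once this commuting Fortin projection is secured, the remaining estimates are routine.
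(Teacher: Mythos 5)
Your argument is correct, but it takes a genuinely different route from the paper. The paper's own proof of Theorem~\ref{Thm: 2Destimate} is a two-line appeal to the standard Babu\v{s}ka--Brezzi theory: having already established the K-ellipticity \eqref{cond: K-ellipticity} and the discrete inf-sup condition \eqref{eq: inf-sup-2D}, it quotes the quasi-optimal bound $\norm{\sigma-\sigma_h}_{H(\dv)}+\norm{u-u_h}_{0}\leqslant C\inf_{\tau_h,v_h}\left(\norm{\sigma-\tau_h}_{H(\dv)}+\norm{u-v_h}_{0}\right)$ and concludes by standard approximation theory. You instead build the commuting Fortin projection $\Pi_h\sigma=I_h\sigma+\sigma_b$ from Lemma~\ref{Lem: interpolation2D}, the moment condition \eqref{req1}, and Lemma~\ref{Lem: RMperp-2D}, use $\dv\Sigma_{2,h}\subseteq V_{2,h}$ to get $\dv\sigma_h=P_h\dv\sigma$, and run the kernel-ellipticity plus inf-sup duality steps by hand. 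This is precisely the refined argument the paper reserves for the \emph{next} theorem (the remark following Theorem~\ref{Thm: 2Destimate} introduces exactly your $\Pi_h$ with the property $\ip{\dv(\sigma-\Pi_h\sigma),v_h}{\Omega}=0$, citing the Arnold--Winther/Hu style analysis). What the paper's route buys is brevity once stability is known; what yours buys is strictly more: the superconvergent bounds $\norm{\sigma-\sigma_h}_{0}\leqslant Ch^3\norm{\sigma}_{3}$ and $\norm{P_hu-u_h}_{0}\leqslant Ch^3\norm{\sigma}_{3}$, i.e.\ you prove the paper's subsequent $L^2$ stress theorem along the way. Two small points deserve care but are routine: the Bramble--Hilbert step requires \emph{local} reproduction of $P_2$ and local boundedness of $\Pi_h$ on macro-element patches (secured by the Piola scaling \eqref{transform-matrix}--\eqref{eq: div}, as in the paper's own scaling arguments), and the vanishing of the boundary moments in your integration by parts uses that a rigid motion on $M$ extends to one on $M^+\cup M^-$, so \eqref{req1} indeed applies edge by edge on $\partial M$.
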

		\begin{proof}
			The standard theory of mixed
			finite element methods \cite{Boffi-Brezzi-Fortin2013} give the
			following quasi-optimal error estimate immediately
			\begin{equation}
			\norm{\sigma-\sigma_h}_{H(\dv)}+\norm{u-u_h}_{0}\leq C\inf\limits_{\tau_h\in\Sigma_{2,h},v_h\in V_{2,h}}\left(\norm{\sigma-\tau_h}_{H(\dv)}+\norm{u-v_h}_{0}\right),
			\end{equation}
			and the estimate follows from the standard approximation theory.
		\end{proof}

		 {Lemma~\ref{Lem: interpolation2D} and Lemma~\ref{Lem: RMperp-2D} imply that there exists an interpolation operator $\Pi_h:\ H^1\left(\Omega;\Sym\right)\rightarrow\Sigma_{2,h}$ such that
		\begin{equation*}
			\ip{\dv\left(\sigma-\Pi_h\sigma\right),v_h}{\Omega}=0,\quad\forall v_h\in V_{2,h}.
	\end{equation*}
	Similar arguments as in \cite{Arnold-Winther-conforming, Hu2015trianghigh} show {the following optimal convergence} for the stress in the $L^2$ norm.}
		
		\begin{theorem}
			{Suppose that $\sigma\in H^3(\Omega;\Sym)$. It holds}
			\begin{equation}
			\norm{\sigma-\sigma_h}_{0}\leqslant Ch^3\norm{\sigma}_{3}.
			\end{equation}
		\end{theorem}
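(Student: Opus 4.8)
The plan is to run the standard superconvergence (duality-free) argument for the stress in mixed methods, in which the extra power of $h$ comes precisely from the commuting interpolation $\Pi_h$ introduced just before the statement, satisfying $\ip{\dv(\sigma-\Pi_h\sigma)}{v_h}{\Omega}=0$ for all $v_h\in V_{2,h}$. First I would record the error equations obtained by subtracting \eqref{discretizedP} from \eqref{continuousP}: for all $(\tau_h,v_h)\in\Sigma_{2,h}\times V_{2,h}$,
\begin{equation*}
(A(\sigma-\sigma_h),\tau_h)+(\dv\tau_h,u-u_h)=0,\qquad (\dv(\sigma-\sigma_h),v_h)=0.
\end{equation*}
Then I would split the error via the triangle inequality as $\norm{\sigma-\sigma_h}_0\leqslant\norm{\sigma-\Pi_h\sigma}_0+\norm{\Pi_h\sigma-\sigma_h}_0$ and reduce the whole estimate to controlling the fully discrete quantity $\tau_h:=\Pi_h\sigma-\sigma_h\in\Sigma_{2,h}$.

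The key step is to show that $\tau_h$ is exactly divergence free. Subtracting the commuting property of $\Pi_h$ from the second error equation gives $\ip{\dv(\Pi_h\sigma-\sigma_h)}{v_h}{\Omega}=0$ for all $v_h\in V_{2,h}$. Because the spaces are chosen so that $\dv\Sigma_{2,h}\subseteq V_{2,h}$, one may legitimately take $v_h=\dv\tau_h$, which forces $\dv\tau_h=0$; hence $\tau_h\in Z_h$ and $\norm{\tau_h}_{H(\dv)}=\norm{\tau_h}_0$. Next I would test the first error equation with this $\tau_h$: the term $(\dv\tau_h,u-u_h)$ vanishes, leaving $(A(\sigma-\sigma_h),\tau_h)=0$. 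Writing $\sigma-\sigma_h=(\sigma-\Pi_h\sigma)-\tau_h$ and invoking the K-ellipticity condition \eqref{cond: K-ellipticity} on $Z_h$ together with the boundedness of $A$ yields
\begin{equation*}
C\norm{\tau_h}_0^2\leqslant (A\tau_h,\tau_h)=(A(\Pi_h\sigma-\sigma),\tau_h)\leqslant C\norm{\sigma-\Pi_h\sigma}_0\,\norm{\tau_h}_0,
\end{equation*}
so that $\norm{\Pi_h\sigma-\sigma_h}_0\leqslant C\norm{\sigma-\Pi_h\sigma}_0$.

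Combining this with the triangle inequality reduces everything to the $L^2$ approximation property of $\Pi_h$, namely $\norm{\sigma-\Pi_h\sigma}_0\leqslant Ch^3\norm{\sigma}_3$, which follows from the fact that $\Sigma_{2,h}$ contains the full $P_2$ symmetric tensors and the stability bound \eqref{ineq: Ih2D_stability} (or the Scott-Zhang bound \eqref{ineq: ScottZhang} built into the construction of $\Pi_h$), via a standard Bramble-Hilbert/scaling argument as in \cite{Arnold-Winther-conforming,Hu2015trianghigh}. I expect the only genuinely delicate point to be ensuring that the commuting interpolation $\Pi_h$ guaranteed by Lemma~\ref{Lem: interpolation2D} and Lemma~\ref{Lem: RMperp-2D} simultaneously carries the optimal $O(h^3)$ $L^2$ approximation order; the superconvergence mechanism itself is robust, since the gain over the $H(\dv)$ rate of Theorem~\ref{Thm: 2Destimate} rests entirely on the identity $\dv\tau_h=0$, which removes any dependence on the (only $O(h^2)$) divergence approximation and lets the $L^2$ best-approximation of $\sigma$ alone dictate the rate.
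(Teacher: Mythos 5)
Your proposal is correct and coincides with the paper's own (only cited, not spelled out) argument from \cite{Arnold-Winther-conforming,Hu2015trianghigh}: the commuting property of $\Pi_h$ plus $\dv\Sigma_{2,h}\subseteq V_{2,h}$ forces $\dv(\Pi_h\sigma-\sigma_h)=0$, testing the first error equation with this divergence-free difference and using the uniform positive definiteness of $A$ (your appeal to \eqref{cond: K-ellipticity} is legitimate since $\norm{\tau_h}_{H(\dv)}=\norm{\tau_h}_0$ here) gives $\norm{\Pi_h\sigma-\sigma_h}_0\leqslant C\norm{\sigma-\Pi_h\sigma}_0$, and the $O(h^3)$ approximation of $\Pi_h$ follows since it reproduces $\Sigma_{2,h}\cap H^1(\Omega;\Sym)\supseteq$ continuous piecewise $P_2$ tensors while the bubble correction obeys $\norm{\delta}_{0,M}\leqslant Ch_M\norm{\dv(\sigma-I_h\sigma)}_{0,M}$ by scaling. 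Your flagged ``delicate point'' is indeed the only place needing care, and it resolves exactly as you suggest.
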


		\subsection{The mixed  {element} of  {degree} $k=3$ for the 3D problem}

		Each macro-element for {$n=3$} also consists of four sub-elements, {see} Figure \ref{fig:3dp3p2}. For each macro-element $M$, apart from the bubble functions {in \eqref{def: Hdiv-bubble}} on each of the four sub-elements {$K\subset M$}, nine globally continuous functions vanishing on $\partial M$ and eight new macro-element bubble functions are added to $\Sigma_{M,3,b}$.

		\subsubsection{The definition of a macro-element}
		See Figure~\ref{fig:3dp3p2} for an illustration of the macro-element. The macro-element $M=x_0x_1x_2x_3$ consists of four  {tetrahedra}, namely $K_1=x_0x_1m_1m_2$, $K_2=x_1x_3m_1m_2$, $K_3=x_1x_3m_2m_3$ and $K_4=x_2x_3m_2m_3$. They are separated by 3 interior faces, namely $F_1=x_1m_1m_2$, $F_2=x_1x_3m_2$ and $F_3=x_3m_2m_3$. Here $m_i$ is the midpoint of the edge of $M$ for $i=1,2,3$,  {and $d_i$ is the centroid of the face $F_i$ for $1\leqslant i\leqslant3$. The four tetrahedra are generated by two successive uniform bisections of $M$. The new edges generated {by} the first bisection are $x_1m_2$ and $x_3m_2$, with the points of the trisection $g_3,g_4,g_5,g_6$. The new edges generated {by} the second bisection are $m_1m_2$ and $m_2m_3$, with the points of the trisection $g_1,g_2,g_7,g_8$.}

		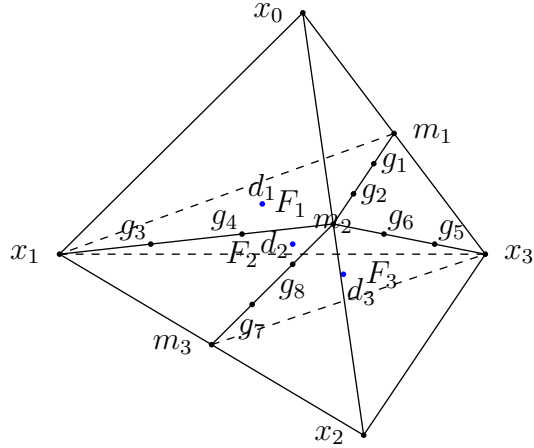
\begin{figure}[h]
			\centering
			\begin{tikzpicture}[line width=0.5pt,scale=0.8]
			\coordinate (center) at (0,0);
			\coordinate (V1) at (2,4);
			\coordinate (V2) at (-2,0);
			\coordinate (V3) at (3,-3);
			\coordinate (V4) at (5,0);
			
			\coordinate (M1) at (3.5,2);
			\coordinate (M2) at (2.5,0.5);
			\coordinate (M3) at (0.5,-1.5);

			%
			\coordinate (P1) at (3.166666,1.5);
			\coordinate (P2) at (2.8333333,1);	
			\coordinate (P3) at (-0.5,0.166666666);	
			\coordinate (P4) at (1,0.333333333);	
			\coordinate (P5) at (4.166666,0.16666666);
			\coordinate (P6) at (3.3333333,0.3333333);	
			\coordinate (P7) at (1.166666,-0.8333333);	
			\coordinate (P8) at (1.8333333,-0.16666666);	
			
			\coordinate (C1) at (1.333333333,0.8333333);
			\coordinate (C2) at (1.8333333333,0.1666666666);	
			\coordinate (C3) at (2.666666666,-0.3333333333);		
			
			\draw (V1) -- (V2) -- (V3) -- (V4) -- cycle;
			\draw (V1) -- (V3);
			\draw [dashed](V2) -- (V4);
			
			\draw (V2) -- (M2) -- (M3);
			\draw [dashed](M3) -- (V4);
			\draw (V4) -- (M2) -- (M1);
			\draw [dashed](M1) -- (V2);

			
			\draw[fill=black] {(V1)} circle (1pt);
			\draw[fill=black] {(V2)} circle (1pt);
			\draw[fill=black] {(V3)} circle (1pt);
			\draw[fill=black] {(V4)} circle (1pt);
			
			\draw[fill=black] {(M1)} circle (1pt);
			\draw[fill=black] {(M2)} circle (1pt);
			\draw[fill=black] {(M3)} circle (1pt);
			
			\draw[fill=black] {(P1)} circle (1pt);
			\draw[fill=black] {(P2)} circle (1pt);
			\draw[fill=black] {(P3)} circle (1pt);
			\draw[fill=black] {(P4)} circle (1pt);
			\draw[fill=black] {(P5)} circle (1pt);
			\draw[fill=black] {(P6)} circle (1pt);
			\draw[fill=black] {(P7)} circle (1pt);
			\draw[fill=black] {(P8)} circle (1pt);

			\draw[color=blue, fill=blue] {(C1)} circle (1pt);
			\draw[color=blue, fill=blue] {(C2)} circle (1pt);
			\draw[color=blue, fill=blue] {(C3)} circle (1pt);

			\node [left =0.1cm] at (V1) {$x_0$}; 
			\node [left =0.1cm] at (V2) {$x_1$}; 
			\node [left =0.1cm] at (V3) {$x_2$}; 
			\node [right =0.1cm] at (V4) {$x_3$}; 
			
			\node [right =0.1cm] at (M1) {$m_1$}; 
			\node at (M2) {$m_2$}; 
			\node [left =0.1cm] at (M3) {$m_3$}; 
			
			\node [right =-0.05cm] at (P1) {$g_1$}; 
			\node [right =-0.05cm] at (P2) {$g_2$}; 
			\node [above left =-0.1cm] at (P3) {$g_3$}; 
			\node [above left =-0.1cm] at (P4) {$g_4$}; 
			\node [above right =-0.1cm] at (P5) {$g_5$}; 
			\node [above right =-0.1cm] at (P6) {$g_6$}; 
			\node [below =0.05cm] at (P7) {$g_7$}; 
			\node [below =0.05cm] at (P8) {$g_8$}; 
			
			\node [above =-0.1cm] at (C1) {$d_1$}; 
			\node [left=-0.1cm] at (C2) {$d_2$}; 
			\node [below right=-0.1cm] at (C3) {$d_3$};

			\coordinate (F1) at (1.8,0.8333333);
			\coordinate (F2) at (1,0);	
			\coordinate (F3) at (3.3,-0.3333333333);		
			\node at (F1) {$F_1$}; 
			\node at (F2) {$F_2$}; 
			\node at (F3) {$F_3$}; 
			\end{tikzpicture}
			
			\caption{The macro-element for the mixed  {element} of  {degree} $k=3$ in 3D.}
			\label{fig:3dp3p2}
		\end{figure}

		\subsubsection{The macro-element bubble functions}  {Recall that $\Phi_X$ from Section \ref{subsec: notation} denotes the Lagrange basis function associated with $X$.} The macro-element bubble functions fall into three categories:
		\begin{enumerate}
			\item Piecewise $H(\dv;\Sym)$-bubble functions: $\trace{\tau}{K}\in \Sigma_{K,3,b}$ for each $K\subset M$;
			\item Globally continuous functions that vanish on $\partial M$: $\Phi_{d_1}(\bm{t}_1\bm{n}^T+\bm{n}\bm{t}_1^T)$, $\Phi_{d_1}(\bm{t}_2\bm{n}^T+\bm{n}\bm{t}_2^T)$ and $\Phi_{d_1}\bm{n}\bm{n}^T$. {Here} $\bm{t}_i$ {with $1\leqslant i\leqslant2$ are the independent tangential vectors} of the interior face $F_1$, with $i=1,2$, and $\bm{n}$ is the normal vector of $F_1$.  {And six} other bubble functions  {can be} defined for $d_2$ and $d_3$ in the similar manner;
			\item Globally discontinuous functions that have vanishing normal components on $\partial M$: $\left(\trace{\Phi_{g_3}}{M}\right)\bm{t}_2\bm{t}_2^T$ and $\left(\trace{\Phi_{g_3}}{M}\right)(\bm{t}_1\bm{t}_2^T+\bm{t}_2\bm{t}_1^T)$. {Here} $\bm{t}_1$ is the tangential vector  {of} the edge $x_1m_2$, $\bm{n}$ is the normal vector of $x_0x_1x_2$ and $\bm{t}_2=\bm{t}_1\times\bm{n}$.  {And six} other bubble functions  {can be} defined for $g_i$($4\leqslant i\leqslant6$) in the similar manner.
		\end{enumerate}
		The linear span of the macro-element bubble functions {in (1)-(3)} defines the macro-element bubble function space $\Sigma_{M,3,b}$.

		{\begin{remark}
				The macro-element bubble functions {in (1)-(2)} are contained in the $P_3$ analogous element of \cite{Hu2015trianghigh}. In particular, the macro-element bubble functions {in (2)} are associated with the DoFs in the interior of the faces. The macro-element bubble functions {in (3)} are proposed {due to} the coplanar faces in the macro-elements.
		\end{remark}}

		\subsubsection{Discrete stability}

		Similar to {$n=2$}, the following lemma uses {the DoFs on the macro-element faces} to modify the Scott-Zhang interpolation.
		
		\begin{lemma}
			There exists an interpolation operator $I_h:H^1(\Omega;\Sym)\rightarrow\Sigma_{3,h}\cap H^1(\Omega;\Sym)$ such that for any macro-element face $F$,
			\begin{equation}
			\int_{F}\big((\tau-I_h\tau)\bm{n}_F\big)\cdot w\dx{S}=0,\ \forall\tau\in H^1(\Omega;\Sym),\ \forall w\in RM(M^+\cup M^-).
			\label{eq: delta3D}
			\end{equation}
			Here $\bm{n}_F$ denotes the unit normal vector of the face $F$, $M^{+}$ and $M^-$ denote  the two macro-elements sharing $F$.  {Besides, it holds}
			\begin{equation}
			\norm{I_h\tau}_{H(\dv)}\leqslant{C\norm{\tau}_{1}}.
			\label{interpolation_stability}
			\end{equation}	
			\label{Lem: interpolation3DP3}
		\end{lemma}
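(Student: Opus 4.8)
The plan is to follow the strategy of Lemma~\ref{Lem: interpolation2D} almost verbatim, replacing macro-edges by macro-faces and sub-edge bubbles by sub-triangle bubbles. First I would take $I_1:H^1(\Omega;\Sym)\to\Sigma_{3,h}\cap H^1(\Omega;\Sym)$ to be the Scott--Zhang interpolation \cite{ScottZhang}, which enjoys the elementwise estimate $\sum_{K\in\T_h}(\norm{\nabla(\tau-I_1\tau)}_{0,K}^2+h_K^{-2}\norm{\tau-I_1\tau}_{0,K}^2)\le C\abs{\tau}_1^2$, the exact analogue of \eqref{ineq: ScottZhang}. I would then set $I_h\tau:=I_1\tau+\delta$ and build the correction $\delta\in\Sigma_{3,h}$ from continuous $P_3$ Lagrange functions attached to nodes in the relative interior of the macro-faces, so that $\delta$ alters only the face traces and keeps $I_h\tau\in H^1(\Omega;\Sym)$. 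The task then reduces to producing, on each macro-face $F$, a $\delta$ with $\int_F(\delta\bm{n}_F)\cdot w\dx{S}=\int_F((\tau-I_1\tau)\bm{n}_F)\cdot w\dx{S}$ for all $w\in RM(M^+\cup M^-)$, which is \eqref{eq: delta3D}.

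For the construction on a single face I would match the full tensorial $P_1(F)$-moments of $\tau-I_1\tau$, which is sufficient: since $\trace{w}{F}\in P_1(F;\R^3)$ for every rigid motion, $((\tau-I_1\tau-\delta)\bm{n}_F)\cdot w$ is a linear combination of the components of $\tau-I_1\tau-\delta$ tested against $P_1(F)$ functions. Writing $\delta=\sum_{j=1}^{6}\sum_{l}a_j^{(l)}T_j\phi_l$ with $\{T_j\}$ the canonical (mutually orthogonal) basis of $\Sym$ and $\phi_l$ the chosen interior-face Lagrange functions, the six tensor directions decouple, and for each $j$ the problem becomes: given $f_j=(\tau-I_1\tau):T_j\in L^2(F)$, find $a_j^{(l)}$ with $\sum_l a_j^{(l)}\int_F\phi_l p\dx{S}=\int_F f_j p\dx{S}$ for all $p\in P_1(F)$. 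As in 2D this is solvable exactly when the adjoint system $\int_F\phi_l p\dx{S}=0$ forces $p=0$ in $P_1(F)$. I would verify this unisolvence from the geometry of Figure~\ref{fig:3dp3p2}: the two bisections split each macro-face into either three or two sub-triangles. On a three-sub-triangle face I would take $\phi_l$ to be the three cubic bubbles $\lambda_a\lambda_b\lambda_c$; by their symmetry about the respective centroids, $\int_F\phi_l p\dx{S}$ is, up to a positive factor, the value of the affine $p$ at the centroid, so the three equations force $p$ to vanish at three non-collinear points, whence $p\equiv0$. On a two-sub-triangle face I would add one $P_3$ node interior to the bisecting segment and check nonsingularity of the resulting $3\times3$ moment matrix by a direct local computation (or the reference-element rank check used later in Section~\ref{subsec: 3DP2}).

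The stability bound \eqref{interpolation_stability} I would obtain exactly as in Lemma~\ref{Lem: interpolation2D}. Since the coefficients $a_j^{(l)}$ solving the decoupled local systems depend linearly and boundedly on the face moments of $\tau-I_1\tau$, the matrix Piola transform \eqref{transform-matrix}, the divergence identity \eqref{eq: div}, a local trace inequality, and the standard scaling argument of \cite{Arnold-Winther-conforming,Hu2015trianghigh} give $\sum_{K\in\T_h}(\norm{\nabla\delta}_{0,K}^2+h_K^{-2}\norm{\delta}_{0,K}^2)\le C\sum_{K\in\T_h}(\norm{\nabla(\tau-I_1\tau)}_{0,K}^2+h_K^{-2}\norm{\tau-I_1\tau}_{0,K}^2)\le C\abs{\tau}_1^2$. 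Combining this with the $H^1$-stability of $I_1$ and $\norm{\dv\cdot}_0\le\norm{\nabla\cdot}_0$ yields $\norm{I_h\tau}_{H(\dv)}\le\norm{I_1\tau}_{H(\dv)}+\norm{\delta}_{H(\dv)}\le C\norm{\tau}_1$.

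The main obstacle is the face-by-face unisolvence in the second paragraph: unlike in 2D, the macro-faces are not subdivided uniformly (some into three, some into two sub-triangles), so the clean centroid-symmetry argument handles only part of the cases, and the two-sub-triangle faces require an explicit verification that three suitably chosen interior-face basis functions produce an invertible $P_1(F)$-moment matrix. Everything else is a direct transcription of the two-dimensional proof.
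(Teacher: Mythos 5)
Your proposal is correct and follows essentially the same route as the paper: Scott--Zhang interpolation $I_1$, a correction $\delta$ supported at $P_3$ Lagrange nodes in the relative interiors of the macro-faces, solvability reduced to injectivity of the adjoint system on $P_1(F)$, and the Piola-transform scaling argument of \cite{Arnold-Winther-conforming,Hu2015trianghigh} for \eqref{interpolation_stability}. The only genuine difference is how the face-by-face unisolvence is settled, and there the paper's choice is slightly slicker than yours. On the two-sub-triangle face $F=x_0x_1x_3$ the paper uses \emph{four} nodes rather than your three --- the two sub-face centroids $d_4,d_5$ together with \emph{both} trisection points $g_9,g_{10}$ of $x_1m_1$ --- precisely so that the span of the four traces is $\lambda_{x_1}\lambda_{m_1}\Span\{\lambda_{x_0},\lambda_{x_3},\lambda_{x_1},\lambda_{m_1}\}\supseteq\lambda_{x_1}\lambda_{m_1}P_1(F)$; then $p$ can be tested against itself, $\int_F\lambda_{x_1}\lambda_{m_1}p^2\dx{S}=0$, and no moment matrix ever needs to be inverted. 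The three-sub-triangle faces are handled in the paper not by your centroid argument but by restriction: on $x_0x_1x_2$ the sub-triangle $x_1x_2m_2$ is itself bisected by $m_2m_3$, so the same four-node argument applies there and forces $\trace{p}{x_1x_2m_2}=0$, hence $p=0$ on all of $F$ since $p\in P_1(F)$. Your variant does work: the centroid-symmetry argument on the three-split faces is sound (the three sub-face centroids are non-collinear), and the $3\times3$ check you deferred for the two-split faces closes affirmatively --- an affine $p$ with vanishing moments against the two centroid bubbles has nodal values proportional to $(-2,1,1,1)$ at $(x_1,m_1,x_0,x_3)$ (use $m_1=\tfrac12(x_0+x_3)$ and $p(c_\pm)=0$ at the two centroids), and a direct barycentric computation then gives $\int_F\Phi_{g_9}\,p\dx{S}=-\tfrac{3}{20}\abs{F}\,p(m_1)$, which is nonzero unless $p\equiv0$. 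So your route costs one explicit local computation that the paper's extra trisection node renders unnecessary; otherwise the two proofs coincide.
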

		\begin{proof}
			Similar to {Lemma~\ref{Lem: interpolation2D} for $n=2$}, the modification on each macro-element face reduces to considering the  {corresponding} adjoint linear system. First consider the face $F=x_0x_1x_3$, see Figure~\ref{fig:3dinterpolation}. It suffices to show that if some $p\in P_1(F)$ satisfies
			{\begin{equation*}
				\int_{F}p\Phi_{X}\dx{S}=0,\quad\text{for $X=d_4,d_5,g_9,g_{10}$},
				\end{equation*}}then $p=0$. Here $d_4$ and $d_5$ are the centroid of the triangle $x_0x_1m_1$ and $x_1x_3m_1$, respectively, and $g_9$ and $g_{10}$ are the points of  {the} trisection for edge $x_1m_1$.
			
			The four $P_3$ Lagrange functions read $\Phi_{d_4}=27\lambda_{x_0}\lambda_{x_1}\lambda_{m_1}$, $\Phi_{d_5}=27\lambda_{x_1}\lambda_{x_3}\lambda_{m_1}$, $\Phi_{g_9}=\frac{27}{2}\lambda_{x_1}\lambda_{m_1}\left(\lambda_{x_1}-1\slash3\right)$ and $\Phi_{g_{10}}=\frac{27}{2}\lambda_{x_1}\lambda_{m_1}\left(\lambda_{m_1}-1\slash3\right)$.
			
			\begin{figure}[h]
				\centering
				\begin{tikzpicture}[line width=0.5pt,scale=0.7]
				\coordinate (center) at (0,0);
				\coordinate (V2) at (-3,0);    
				\coordinate (V1) at (1.5,4.5);        
				\coordinate (V4) at (4.5,0);      
				
				\coordinate (M1) at (3,2.25);
				
				\draw (V1) -- (V2) -- (V4) -- cycle;
				\draw (M1) -- (V2);
				
				\coordinate (C4) at (0.5,2.25);
				\coordinate (C5) at (1.5,0.75);
				\coordinate (P10) at (1,1.5);
				\coordinate (P9) at (-1,0.75);

				\draw[fill=black] {(V2)} circle (2pt);
				\draw[fill=black] {(V1)} circle (2pt);
				\draw[fill=black] {(V4)} circle (2pt);
				
				\draw[fill=black] {(M1)} circle (2pt);
				
				\draw[color=black,fill=black] {(C4)} circle (2pt);
				\draw[color=black,fill=black] {(C5)} circle (2pt);
				\draw[color=black,fill=black] {(P9)} circle (2pt);
				\draw[color=black,fill=black] {(P10)} circle (2pt);

				\node [above =0.01cm] at (V1) {$x_0$}; 
				\node [below =0.1cm] at (V2) {$x_1$}; 
				\node [below =0.1cm] at (V4) {$x_3$}; 
				\node [right =0.1cm] at (M1) {$m_1$}; 
				
				\node [above =0.01cm] at (C4) {$d_4$}; 
				\node [right =0.01cm] at (C5) {$d_5$}; 
				\node [above  =0.01cm] at (P9) {$g_9$}; 
				\node [above  =0.01cm] at (P10) {$g_{10}$}; 
				
				\end{tikzpicture}
				
				\caption{Interior Lagrange nodes of the macro-element face.}
				\label{fig:3dinterpolation}
			\end{figure}
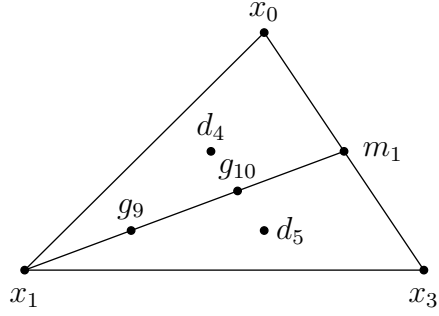
			
			Note that
			{\begin{equation*}
			\begin{aligned}
			\Span\left\{\trace{\Phi_{d_4}}{F},\trace{\Phi_{d_5}}{F},\trace{\Phi_{g_9}}{F},\trace{\Phi_{g_{10}}}{F}\right\}&=\lambda_{x_1}\lambda_{m_1}\Span\left\{\lambda_{x_0},\lambda_{x_3},\lambda_{x_1}-\frac{1}{3},\lambda_{m_1}-\frac{1}{3}\right\}\\
			&=\lambda_{x_1}\lambda_{m_1}\Span\left\{\lambda_{x_0},\lambda_{x_3},\lambda_{x_1},\lambda_{m_1}\right\},
			\end{aligned}
			\end{equation*}}
			and that $p\in P_1(F)\subseteq\Span\left\{\lambda_{x_0},\lambda_{x_3},\lambda_{x_1},\lambda_{m_1}\right\}$. This shows $\int_{F}\lambda_{x_1}\lambda_{m_1}p^2\dx{S}=0$ and $p=0$. 
			
			Moreover, for $F=x_0x_1x_2$, it suffices to use the face $P_3$ Lagrange functions supported on the triangle $x_1x_2m_2$. {This allows to define the modification $\delta$ to satisfy \eqref{eq: delta3D}.} Then a similar argument as in \cite{Arnold-Winther-conforming,Hu2015trianghigh} can verify the stability condition \eqref{interpolation_stability}.
			
		\end{proof}
		
		\begin{lemma}
			For the macro-element bubble function space defined in Section 3.3.2, it holds
			\begin{equation}
			\dv\Sigma_{M,3,b}=RM^{\perp}(M).
			\end{equation}
			\label{Lem: RMperp-3DP3}
		\end{lemma}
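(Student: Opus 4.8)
The plan is to follow the proof of Lemma~\ref{Lem: RMperp-2D} step by step, replacing the interior edges by the three interior faces $F_1,F_2,F_3$ of the macro-element in Figure~\ref{fig:3dp3p2}. The inclusion $\dv\Sigma_{M,3,b}\subseteq RM^{\perp}(M)$ is immediate from $\Sigma_{M,3,b}\subset H_0(\dv,M;\Sym)$: for any $w\in RM(M)$, integration by parts gives $\int_M\dv\tau\cdot w\,\dx{x}=\int_{\partial M}(\tau\bm{n})\cdot w\,\dx{S}=0$. For the reverse inclusion it suffices to prove surjectivity of $\tau\mapsto\dv\tau$ onto $RM^{\perp}(M)$, so I would fix $v\in RM^{\perp}(M)$ satisfying $\int_M\dv\tau\cdot v\,\dx{x}=0$ for all $\tau\in\Sigma_{M,3,b}$ and show that $v\in RM(M)$; since $v\in RM^{\perp}(M)$ as well, this forces $v=0$ and hence establishes the identity.

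First I would use category~(1). Testing with $\tau\in\Sigma_{K_j,3,b}$ supported on a single sub-tetrahedron and invoking \eqref{eq: divbubble}, one gets $\trace{v}{K_j}\in RM(K_j)$ for each $1\leqslant j\leqslant4$, so that $v$ is a piecewise rigid motion on $M$. Integrating by parts on each $K_j$, summing over $j$, and using that every $\tau\in\Sigma_{M,3,b}$ has vanishing normal trace on $\partial M$, I would reduce the orthogonality relation to
\begin{equation*}
0=\sum_{i=1}^{3}\int_{F_i}(\tau\bm{n}_i)\cdot\jumpE{v}{F_i}\,\dx{S},
\end{equation*}
where each jump is the trace on $F_i$ of a difference of two rigid motions, hence an $\R^3$-valued affine field on the face.

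The crux is to drive every face jump to zero. Testing with the three category~(2) functions $\Phi_{d_i}\bm{n}\bm{n}^T$, $\Phi_{d_i}(\bm{t}_1\bm{n}^T+\bm{n}\bm{t}_1^T)$ and $\Phi_{d_i}(\bm{t}_2\bm{n}^T+\bm{n}\bm{t}_2^T)$ and using $(\bm{n}\bm{n}^T)\bm{n}=\bm{n}$ and $(\bm{t}_l\bm{n}^T+\bm{n}\bm{t}_l^T)\bm{n}=\bm{t}_l$ produces the vector identity $\int_{F_i}\Phi_{d_i}\,\jumpE{v}{F_i}\,\dx{S}=\bm{0}$ on each interior face. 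These centroid constraints alone cannot annihilate the affine jump, so I would then invoke category~(3): owing to the coplanar faces created by the two successive bisections, the functions $(\trace{\Phi_{g_3}}{M})\bm{t}_2\bm{t}_2^T$ and $(\trace{\Phi_{g_3}}{M})(\bm{t}_1\bm{t}_2^T+\bm{t}_2\bm{t}_1^T)$, together with their analogues at $g_4,\dots,g_6$, survive the integration by parts and supply additional constraints on the tangential components of the jumps, tested against the edge-supported traces $\trace{\Phi_{g_j}}{F_i}$. As in the 2D argument, the decisive algebraic facts are that these traces span a space of the form $\lambda\,P_1$ on the relevant edges and that the edge directions entering $\bm{t}_1,\bm{t}_2$ are in non-degenerate position, so that the affine components of each $\jumpE{v}{F_i}$ are forced to vanish. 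I expect this to be the main obstacle: in contrast to the single scalar edge computation in Lemma~\ref{Lem: RMperp-2D}, here one must verify that the category~(2) and category~(3) functions together form a unisolvent system for the space of rigid-motion jumps on each interior face, which requires careful bookkeeping of the geometry of the bisections and of the vectors $\bm{t}_1,\bm{t}_2,\bm{n}$.

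Once $\jumpE{v}{F_i}=\bm{0}$ for $i=1,2,3$, the piecewise rigid motion $v$ is continuous across every interior face. By Remark~\ref{rmk: RM-determined} a rigid motion on a tetrahedron is determined by its trace on any face, so matching traces across $F_1,F_2,F_3$ propagates a single rigid motion over all four sub-tetrahedra, giving $v\in RM(M)$. Together with $v\in RM^{\perp}(M)$ this yields $v=0$, which proves the surjectivity and hence $\dv\Sigma_{M,3,b}=RM^{\perp}(M)$.
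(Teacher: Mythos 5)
Your outline reproduces the paper's architecture step for step (inclusion via integration by parts, category~(1) bubbles plus \eqref{eq: divbubble} to make $v$ piecewise rigid, reduction to $\sum_{i=1}^{3}\int_{F_i}(\tau\bm{n}_i)\cdot\jumpE{v}{F_i}\dx{S}=0$, centroid tests from category~(2), tangential tests from category~(3), and Remark~\ref{rmk: RM-determined} to patch the pieces together), but it stops exactly where the content of the lemma begins: you assert that the category~(2)--(3) tests ``form a unisolvent system for the space of rigid-motion jumps'' and explicitly defer the verification as ``the main obstacle.'' That verification \emph{is} the proof, and the paper carries it out concretely. For $F_1=x_1m_1m_2$ it tests with the bubbles at the trisection points $g_1,g_2$ of the edge $m_1m_2$ and at the centroid $d_1$, which yields \eqref{eq: RMperp2}, i.e. $\int_{F_1}\Phi\left(\jumpE{v}{F_1}\cdot\bm{t}\right)\dx{S}=0$ for every $\bm{t}$ parallel to the plane $x_0m_1m_2$ and every $\Phi$ in $\Span\left\{\trace{\Phi_{d_1}}{F_1},\trace{\Phi_{g_1}}{F_1},\trace{\Phi_{g_2}}{F_1}\right\}=\lambda_{m_1}\lambda_{m_2}P_1(F_1)$; since $\jumpE{v}{F_1}\cdot\bm{t}\in P_1(F_1)$ this forces $\jumpE{v}{F_1}\cdot\bm{t}=0$. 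Then, placing $F_1$ in the plane $z=0$ and writing the jump via \eqref{eq: RM} as $\left(a_1+b_{12}y,\ a_2-b_{12}x,\ a_3-b_{13}x-b_{23}y\right)^T$, the vanishing of the linear part says that $(0,b_{12},b_{13})^T$ and $(b_{12},0,-b_{23})^T$ are orthogonal to all vectors parallel to $x_0m_1m_2$; since that plane is not parallel to $F_1$, all $b_{ij}=0$, the jump is constant, and the centroid identity annihilates it. Your phrase ``the edge directions are in non-degenerate position'' is a placeholder for precisely this non-parallelism argument, which has to be identified and used; also, the relevant span fact is $\lambda_{m_1}\lambda_{m_2}P_1(F_1)$ on the \emph{face}, not a ``$\lambda P_1$'' space ``on the relevant edges.''

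There is a second, concrete defect in your plan: you propose to run the tangential tests with the functions at $g_3,\dots,g_6$ only and to treat each face independently, as in Lemma~\ref{Lem: RMperp-2D}. But $g_3,g_4$ lie on $x_1m_2=\overline{F_1}\cap\overline{F_2}$ and $g_5,g_6$ on $x_3m_2=\overline{F_2}\cap\overline{F_3}$, so $\Phi_{g_3}$ has nonvanishing trace on \emph{both} $F_1$ and $F_2$, and each such test produces one equation coupling $\jumpE{v}{F_1}$ with $\jumpE{v}{F_2}$; the face-by-face decoupling you invoke simply does not hold for these functions. Worse, a dimension count shows your selection cannot suffice: the three jumps carry $3\times 6=18$ independent parameters (each is the trace of a rigid-motion difference, determined by that trace by Remark~\ref{rmk: RM-determined}), while the nine category~(2) functions and the eight functions at $g_3,\dots,g_6$ supply at most $9+8=17$ homogeneous equations, leaving a nonzero kernel. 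The paper's proof avoids both problems by testing $F_1$ with the bubbles at $g_1,g_2\in m_1m_2$ (whose tangent pair is parallel to the boundary plane $x_0m_1m_2$ and which, among the interior faces, see only $F_1$), symmetrically $F_3$ with those at $g_7,g_8$, and handling $F_2$ only after the neighboring jumps vanish --- so the bubbles at the trisection points of $m_1m_2$ and $m_2m_3$ are indispensable to the argument, not optional analogues.
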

		\begin{proof}
			Similar to {Lemma~\ref{Lem: RMperp-2D} for $n=2$}, since $\dv\Sigma_{M,3,b}\subseteq RM^{\perp}(M)$, it suffices to show that for a piecewise rigid motion function $v$, the following equation
			\begin{equation}
			\sum_{i=1}^{3}\int_{F_i}(\tau \bm{n_i})\cdot{\jumpE{v}{F_i}}\dx{S}=0,\quad\forall\tau\in\Sigma_{M,3,b},
			\label{eq: divbubble-eq}
			\end{equation}
			implies ${\jumpE{v}{F_i}}=0$.
			
			Take {the face} $F_1=x_1m_1m_2$ for an example. {The choice of $\tau$ in \eqref{eq: divbubble-eq} by} the bubble functions associated with $g_1$, $g_2$ and $d_1$ yields
			\begin{equation}
			\left\{
			\begin{aligned}
			&\int_{F_1}\Phi_{d_1}{\jumpE{v}{F_1}}\dx{S}=0,\\
			&\int_{F_1}\Phi_{g_1}\left({\jumpE{v}{F_1}}\cdot \bm{t}\right)\dx{S}=\int_{F_1}\Phi_{g_2}\left({\jumpE{v}{F_1}}\cdot \bm{t}\right)\dx{S}=0\quad{\text{for all $\bm{t}$ parallel to $x_0m_1m_2$.}}
			\end{aligned}
			\right.
			\label{eq: RMperp2}
			\end{equation}
			
			Without loss of generality, assume $F_1$ {lies on the $x-y$ plane.} Let $\bm{t}=(t_1,t_2,t_3)^T$ be an arbitrary vector parallel to $x_0m_1m_2$. Note ${\jumpE{v}{F_1}}\cdot\bm{t}\in P_1(F_1)$ and {$$\Span\left\{\trace{\Phi_{d_1}}{F_1},\trace{\Phi_{g_1}}{F_1},\trace{\Phi_{g_2}}{F_1}\right\}=\lambda_{m_1}\lambda_{m_2}P_1(F_1).$$}{These facts} imply ${\jumpE{v}{F_1}}\cdot\bm{t}=0$ on $F_1$. {Recall the expression in \eqref{eq: RM}. Note  {that} $z=0$ holds on $F_1$. This leads  {to}
				\begin{equation*}
				\jumpE{v}{F_1}=\begin{bmatrix}
				a_1+b_{12}y\\
				a_2-b_{12}x\\
				a_3-b_{13}x-b_{23}y
				\end{bmatrix},
				\end{equation*}
				 {which} implies
				$${\jumpE{v}{F_1}}\cdot\bm{t}=\left(a_1t_1+a_2t_2+a_3t_3\right)-\left(b_{12}t_2+b_{13}t_3\right)x+\left(b_{12}t_1-b_{23}t_3\right)y,$$and
				\begin{equation*}
				\left\{
				\begin{aligned}
				b_{12}t_2+b_{13}t_3&=0,\\
				b_{12}t_1-b_{23}t_3&=0.
				\end{aligned}
				\right.
				\end{equation*}Note that $\bm{t}$ is an arbitrary vector parallel to $x_0m_1m_2$.}
			This means both {$(0,b_{12},b_{13})^T$ and $(b_{12},0,-b_{23})^T$} are orthogonal to $x_0m_1m_2$. {This} leads  {to} {$b_{12}=b_{13}=b_{23}=0$} since $x_0m_1m_2$ is not parallel to $F_1$. Therefore ${\jumpE{v}{F_1}}$ is a constant vector on $F_1$. This and the first equation of \eqref{eq: RMperp2} lead  {to} ${\jumpE{v}{F_1}}=0$. Similar arguments show ${\jumpE{v}{F_2}}=0$ and ${\jumpE{v}{F_3}}=0$.

		\end{proof}
		
		The discrete inf-sup condition is an immediate result of Lemma \ref{Lem: interpolation3DP3} and Lemma \ref{Lem: RMperp-3DP3}.
		\begin{theorem}
			The discrete inf-sup condition holds, \emph{i.e.} there exists a positive constant $C$ independent of $h$ such that
			\begin{equation}
			\inf_{v_h\in V_{3,h}}\sup_{\tau_h\in\Sigma_{3,h}}\frac{(\dv\tau_h,v_h)}{\norm{\tau_h}_{H(\dv)}\norm{v_h}_{0}}\geqslant C.
			\label{eq: inf-sup-3D}
			\end{equation}
		\end{theorem}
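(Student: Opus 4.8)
The plan is to reproduce verbatim the two-step argument already carried out for the 2D case in the proof of Theorem~\ref{Thm: inf-sup-2D}, now with the 3D ingredients supplied by Lemma~\ref{Lem: interpolation3DP3} and Lemma~\ref{Lem: RMperp-3DP3}. By the well-posedness of the continuous problem in three dimensions (the divergence operator $\dv:\ H^1(\Omega;\Sym)\to V$ is surjective with a bounded right inverse), for any $v_h\in V_{3,h}$ one first selects $\tau_1\in H^1(\Omega;\Sym)$ with $\dv\tau_1=v_h$ and $\norm{\tau_1}_{1}\leqslant C\norm{v_h}_{0}$. Applying the interpolation operator $I_h$ of Lemma~\ref{Lem: interpolation3DP3} produces a conforming field $I_h\tau_1\in\Sigma_{3,h}\cap H^1(\Omega;\Sym)$ whose $H(\dv)$-norm is controlled by $\norm{\tau_1}_1$ through \eqref{interpolation_stability}.

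The key reduction is to show that the residual $v_h-\dv I_h\tau_1$ lies in $RM^{\perp}(M)$ on each macro-element $M\in\M_h$. For any $w\in RM(M)$, integration by parts on $M$ gives
\begin{equation*}
\ip{v_h-\dv I_h\tau_1}{M}_{\!*}=-\int_{M}(\tau_1-I_h\tau_1):\eps(w)\dx{x}+\int_{\partial M}\big((\tau_1-I_h\tau_1)\bm{n}\big)\cdot w\dx{S},
\end{equation*}
where the first term vanishes because $\eps(w)=0$ and the boundary term vanishes face-by-face by the moment property \eqref{eq: delta3D} of $I_h$, since $\trace{w}{F}\in RM(M^+\cup M^-)$ for each macro-element face $F\subset\partial M$. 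Hence $v_h-\dv I_h\tau_1\in RM^{\perp}(M)$ for every $M$. (Here I have abbreviated the $L^2(M)$-pairing; in the write-up I would simply use $\int_M(\cdots)\cdot w\dx{x}$ as in the 2D proof.)

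With the residual now in the image characterized by Lemma~\ref{Lem: RMperp-3DP3}, the identity $\dv\Sigma_{M,3,b}=RM^{\perp}(M)$ furnishes, macro-element by macro-element, a bubble correction $\tau_2\in\Sigma_{3,h}$ with $\trace{\tau_2}{M}\in\Sigma_{M,3,b}$, satisfying $\dv\tau_2=v_h-\dv I_h\tau_1$ together with the stability estimate $\norm{\tau_2}_{H(\dv)}\leqslant C\norm{v_h-\dv I_h\tau_1}_{0}$ (the bound coming from a finite-dimensional norm-equivalence on the reference macro-element plus the matrix Piola transform \eqref{transform-matrix} and a scaling argument, exactly as in \cite[Theorem~3.1]{Hu2015trianghigh}). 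Setting $\tau_h=I_h\tau_1+\tau_2$ yields $\dv\tau_h=v_h$, and chaining the two stability bounds gives $\norm{\tau_h}_{H(\dv)}\leqslant C(\norm{v_h}_0+\norm{I_h\tau_1}_{H(\dv)})\leqslant C\norm{v_h}_0$, which delivers the inf-sup constant. The main obstacle in this theorem is not in the final assembly—which is purely formal once the two lemmas are in hand—but rather lies entirely upstream, in establishing Lemma~\ref{Lem: RMperp-3DP3}: one must verify that the eight discontinuous macro-element bubble functions in category~(3), built on the coplanar faces via $\Phi_{g_i}$, suffice to annihilate the tangential jumps of a piecewise rigid motion across each interior face $F_i$. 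Since those lemmas are already proved in the excerpt, the inf-sup condition follows immediately.
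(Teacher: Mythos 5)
Your proposal is correct and is essentially the paper's own argument: the paper proves this theorem by declaring it an immediate consequence of Lemma~\ref{Lem: interpolation3DP3} and Lemma~\ref{Lem: RMperp-3DP3}, with the assembly being word-for-word the two-step construction $\tau_h=I_h\tau_1+\tau_2$ already written out in Theorem~\ref{Thm: inf-sup-2D}, which you reproduce faithfully (including the integration-by-parts reduction of the residual to $RM^{\perp}(M)$ and the scaling-based stability of the bubble correction from \cite[Theorem~3.1]{Hu2015trianghigh}). Your closing observation that the real work lies upstream in Lemma~\ref{Lem: RMperp-3DP3} also matches the paper's presentation.
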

		
		\subsubsection{{Error} estimates}
		
		The discrete inf-sup condition \eqref{eq: inf-sup-3D} leads to the following error estimates for the pair  {$\Sigma_{3,h}\times V_{3,h}$.}
		
		\begin{theorem}
			Let $(\sigma,u)\in\Sigma\times V$ {solve \eqref{continuousP}} and $(\sigma_h,u_h)\in{\Sigma_{3,h}\times V_{3,h}}$ {solve \eqref{discretizedP}}. Then the following estimate holds
			\begin{equation}
			\norm{\sigma-\sigma_h}_{H(\dv)}+\norm{u-u_h}_{0}\leqslant Ch^3\left(\norm{\sigma}_{4}+\norm{u}_{3}\right),
			\end{equation}
			 {provided that $\sigma\in H^4\left(\Omega;\Sym\right)$ and $u\in H^3\left(\Omega;\R^3\right)$.}
			\label{Thm: 3DP3estimate}
		\end{theorem}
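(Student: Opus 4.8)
The plan is to derive the error estimate as a direct consequence of the discrete stability established in the preceding theorem, following the standard Babuška--Brezzi framework for mixed finite element methods. The key observation is that the theorem to be proved is the exact three-dimensional analogue of Theorem~\ref{Thm: 2Destimate}, whose proof already exhibits the template: combine a quasi-optimal abstract error estimate with standard polynomial approximation theory. I would open by invoking the standard theory of mixed methods \cite{Boffi-Brezzi-Fortin2013}, which, under the $K$-ellipticity condition \eqref{cond: K-ellipticity} and the discrete inf-sup condition \eqref{eq: inf-sup-3D}, yields the C\'ea-type bound
\begin{equation*}
\norm{\sigma-\sigma_h}_{H(\dv)}+\norm{u-u_h}_{0}\leqslant C\inf_{\tau_h\in\Sigma_{3,h},\,v_h\in V_{3,h}}\left(\norm{\sigma-\tau_h}_{H(\dv)}+\norm{u-v_h}_{0}\right).
\end{equation*}
Here the $K$-ellipticity is immediate because $\dv\Sigma_{3,h}\subseteq V_{3,h}$, exactly as noted for the 2D element, so any $\sigma_h\in Z_h$ satisfies $\dv\sigma_h=0$ and the compliance tensor $A$ controls $\norm{\sigma_h}_0=\norm{\sigma_h}_{H(\dv)}$.

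Next I would estimate the infimum by choosing concrete interpolants. For the displacement, the discontinuous piecewise $P_{k-1}=P_2$ space $V_{3,h}$ admits the $L^2$-projection, giving $\norm{u-v_h}_0\leqslant Ch^3\abs{u}_3$ under the assumed regularity $u\in H^3(\Omega;\R^3)$. For the stress, the essential point is that $\Sigma_{3,h}$ contains the full $P_3$ symmetric tensor space on each element together with the macro-element bubbles, so it possesses optimal $O(h^3)$ approximation power in both the $L^2$ norm and the $H(\dv)$ norm; taking a standard interpolant (or the Scott--Zhang-type operator $I_h$ from Lemma~\ref{Lem: interpolation3DP3} composed with bubble corrections) yields $\norm{\sigma-\tau_h}_{H(\dv)}\leqslant Ch^3(\norm{\sigma}_4)$ provided $\sigma\in H^4(\Omega;\Sym)$. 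Combining these two bounds with the quasi-optimality estimate produces the claimed rate $Ch^3(\norm{\sigma}_4+\norm{u}_3)$.

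I do not expect any genuine obstacle here, since all the difficult work---the construction of macro-element bubble functions, the stable interpolation operator with the partial commuting property, and the verification of $\dv\Sigma_{M,3,b}=RM^{\perp}(M)$---has already been completed in Lemmas~\ref{Lem: interpolation3DP3} and~\ref{Lem: RMperp-3DP3} and packaged into the inf-sup condition \eqref{eq: inf-sup-3D}. The only point requiring a little care is confirming that the approximation orders of the discrete spaces match the regularity indices: the stress space reproduces $P_3$ and hence converges at order $h^3$ when $\sigma\in H^4$, while the displacement space reproduces $P_2$ and converges at order $h^3$ when $u\in H^3$, so the two contributions balance at $O(h^3)$. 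Accordingly the proof is brief: state the quasi-optimal estimate from \cite{Boffi-Brezzi-Fortin2013}, then invoke standard approximation theory, mirroring verbatim the structure of the proof of Theorem~\ref{Thm: 2Destimate}.
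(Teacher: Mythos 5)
Your proposal is correct and follows essentially the same route as the paper: the paper proves the 2D analogue (Theorem~\ref{Thm: 2Destimate}) by citing the quasi-optimal estimate from the standard mixed finite element theory of \cite{Boffi-Brezzi-Fortin2013} together with standard approximation theory, and leaves the 3D case to the identical argument based on the inf-sup condition \eqref{eq: inf-sup-3D}. Your additional remarks---that $K$-ellipticity follows from $\dv\Sigma_{3,h}\subseteq V_{3,h}$ and that the $P_3$ stress space and $P_2$ displacement space both deliver $O(h^3)$ under the stated regularity---are exactly the implicit steps the paper relies on.
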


		 {Similar to {$n=2$}, Lemma~\ref{Lem: interpolation3DP3} and Lemma~\ref{Lem: RMperp-3DP3} lead to the following optimal {convergence} for the stress in the $L^2$ norm.}
		\begin{theorem}
			{Suppose that $\sigma\in H^4(\Omega;\Sym)$. It holds}
			\begin{equation}
			\norm{\sigma-\sigma_h}_{0}\leqslant Ch^4\norm{\sigma}_{4}.
			\end{equation}
		\end{theorem}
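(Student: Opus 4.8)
The plan is to follow the orthogonality argument of \cite{Arnold-Winther-conforming, Hu2015trianghigh}, whose key ingredient is a commuting interpolation that simultaneously retains the optimal approximation order. First I would combine Lemma~\ref{Lem: interpolation3DP3} and Lemma~\ref{Lem: RMperp-3DP3} to produce an interpolation operator $\Pi_h:\ H^1(\Omega;\Sym)\rightarrow\Sigma_{3,h}$ enjoying the commuting property
\[
\ip{\dv(\sigma-\Pi_h\sigma),v_h}{\Omega}=0,\quad\forall v_h\in V_{3,h}.
\]
The construction is the usual two-step correction: start from the face-matching interpolation $I_h$ of Lemma~\ref{Lem: interpolation3DP3}, whose property \eqref{eq: delta3D} kills the moments of $(\sigma-I_h\sigma)\bm{n}_F$ against rigid motions, so that after integration by parts $\ip{\dv(\sigma-I_h\sigma),w}{M}=0$ for every $w\in RM(M)$; then, since $\dv\Sigma_{M,3,b}=RM^{\perp}(M)$ by Lemma~\ref{Lem: RMperp-3DP3} and the macro-element bubbles vanish on $\partial M$, add on each $M$ a bubble correction $\sigma_b\in\Sigma_{M,3,b}$ that annihilates the remaining component of $\dv(\sigma-I_h\sigma)$ in $RM^{\perp}(M)$ without disturbing the face moments.

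Next, from the second equations of \eqref{continuousP} and \eqref{discretizedP} together with $\dv\Sigma_{3,h}\subseteq V_{3,h}$, I would note that $\dv\sigma_h$ equals the $L^2$ projection $P_h\dv\sigma$ of $\dv\sigma$ onto $V_{3,h}$, and likewise $\dv\Pi_h\sigma=P_h\dv\sigma$ by the commuting property, so $\dv(\Pi_h\sigma-\sigma_h)=0$. Subtracting the first equations of \eqref{continuousP} and \eqref{discretizedP} yields the error relation
\[
\ip{A(\sigma-\sigma_h),\tau_h}{\Omega}+\ip{\dv\tau_h,u-u_h}{\Omega}=0,\quad\forall\tau_h\in\Sigma_{3,h},
\]
and choosing $\tau_h=\Pi_h\sigma-\sigma_h$ makes the second term vanish, giving the orthogonality $\ip{A(\sigma-\sigma_h),\Pi_h\sigma-\sigma_h}{\Omega}=0$. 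Invoking the uniform positive definiteness of $A$,
\[
\norm{\sigma-\sigma_h}_0^2\leqslant C\ip{A(\sigma-\sigma_h),\sigma-\sigma_h}{\Omega}=C\ip{A(\sigma-\sigma_h),\sigma-\Pi_h\sigma}{\Omega}\leqslant C\norm{\sigma-\sigma_h}_0\norm{\sigma-\Pi_h\sigma}_0,
\]
whence $\norm{\sigma-\sigma_h}_0\leqslant C\norm{\sigma-\Pi_h\sigma}_0$. Thus the whole estimate is reduced to the approximation power of $\Pi_h$ in the $L^2$ norm.

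The remaining and \emph{main} task is to show $\norm{\sigma-\Pi_h\sigma}_0\leqslant Ch^4\norm{\sigma}_4$, i.e. that the corrections do not degrade the order. The Scott--Zhang part $I_1$ reproduces piecewise $P_3$ tensors and hence obeys $\norm{\sigma-I_1\sigma}_{0,K}\leqslant Ch_K^4\abs{\sigma}_{4,\omega_K}$; since both the face correction $\delta$ and the interior bubble correction $\sigma_b$ are defined through moments of $\sigma-I_1\sigma$, they are controlled by it. I expect this to be the crux: one must rerun the scaling argument of Lemma~\ref{Lem: interpolation3DP3}, using the matrix Piola transform \eqref{transform-matrix} and the local trace inequality, but now retaining the full fourth-order approximation of $\sigma-I_1\sigma$ rather than only the $H^1$-seminorm bound used there for the mere stability estimate, so as to conclude $\norm{\delta}_0+\norm{\sigma_b}_0\leqslant Ch^4\norm{\sigma}_4$. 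Combining this with the triangle inequality and the orthogonality estimate above establishes the claim; note that the extra power of $h$ relative to the $H(\dv)$ bound of Theorem~\ref{Thm: 3DP3estimate} comes entirely from the orthogonality $\ip{A(\sigma-\sigma_h),\Pi_h\sigma-\sigma_h}{\Omega}=0$ paired with the optimal approximation of $\Pi_h$.
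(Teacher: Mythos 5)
Your proposal is correct and follows essentially the same route as the paper: the paper's own (one-line) proof invokes exactly the commuting interpolation $\Pi_h$ built from Lemma~\ref{Lem: interpolation3DP3} and Lemma~\ref{Lem: RMperp-3DP3}, then appeals to the standard orthogonality argument of \cite{Arnold-Winther-conforming,Hu2015trianghigh}, which is what you carry out in detail, including the key identity $\dv(\Pi_h\sigma-\sigma_h)=0$ and the $O(h^4)$ bound on $\norm{\sigma-\Pi_h\sigma}_0$ via the Scott--Zhang estimate and the scaled control of the face and bubble corrections.
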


		{
			\subsection{The mixed element of  {degree} $k=2$ for the 3D problem.}
			\label{subsec: 3DP2}

			In this subsection, the mixed element of  {degree} $k=2$ is constructed on another macro-element mesh.  {For this case,} the complexity of the macro-element brings difficulty to the direct proof of  {$\dv\Sigma_{M,k,b}=RM^{\perp}\left(M\right)$}. Instead, another computer-assisted approach is presented. This approach reduces the proof to the case of a reference macro-element, which can be carried out by  {a} direct calculation of the rank of associated matrices corresponding to the bilinear form {$\ip{\widehat{\dv}\cdot,\cdot}{\widehat{M}}$.}

			\subsubsection{ {Transform} to a reference macro-element with the Piola transforms}
			
			{This subsection uses the transforms \eqref{transform-vector} and \eqref{transform-matrix} to reduce the proof of  {$\dv\Sigma_{M,k,b}=RM^{\perp}(M)$} to the case of a reference macro-element.
				
				Let $\widehat{M}$ be a reference macro-element and $\mathcal{F}:\ \widehat{M}\rightarrow M$ be an affine isomorphism that preserves the elements contained in $\widehat{M}$. {The map $\mathcal{F}$ can be viewed as the isomorphism defined in Section~\ref{subsec: Piola} when restricted on any $K\subset M$.} Note that the transform \eqref{transform-matrix} preserves the continuity of the normal component across the  {interior} faces {of $M$}. Therefore it also sets up a one-to-one correspondence between $H\left(\widehat{\dv},\widehat{M};\Sym\right)$ and $H\left(\dv,M;\Sym\right)$.}

			The following lemma states that, under certain assumptions of the macro-element bubble function space  {$\Sigma_{M,k,b}$}, one only needs to check that  {$\dv\Sigma_{\widehat{M},k,b}=RM^{\perp}\left(\widehat{M}\right)$} on a reference macro-element $\widehat{M}$.

			\begin{lemma}
				\label{Lem: reference}
				Let  {$\Sigma_{\widehat{M},k,b}$ and $\Sigma_{M,k,b}$} be the bubble function spaces on the macro-elements $\widehat{M}$ and $M$, respectively. Assume that the transform given by \eqref{transform-matrix} sets up a one-to-one correspondence between  {$\Sigma_{\widehat{M},k,b}$ and $\Sigma_{M,k,b}$}. Then the identity 
				 {\begin{equation}
				\dv\Sigma_{M,k,b}=RM^{\perp}\left(M\right)
				\label{eq: div-RM-K}
				\end{equation}
				holds if and only if
				\begin{equation}
				\widehat{\dv}\Sigma_{\widehat{M},k,b}=RM^{\perp}\left(\widehat{M}\right).
				\label{eq: div-RM-Khat}
				\end{equation}}
			\end{lemma}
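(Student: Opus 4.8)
The plan is to reduce the set equality \eqref{eq: div-RM-K} to an annihilator (duality) statement, and then to transport that statement between $M$ and $\widehat M$ by pairing the two transforms of Section~\ref{subsec: Piola} against one another. First I would record the inclusion that holds on any macro-element. Since $\Sigma_{M,k,b}\subset H_0(\dv,M;\Sym)$ and $\dv\tau|_K\in P_{k-1}(K;\R^n)$ for each $K\subset M$, every $\dv\tau$ lies in $V_M$, and integration by parts with $\trace{\tau\bm n}{\partial M}=0$ gives $\ip{\dv\tau,w}{M}=-\ip{\tau,\eps(w)}{M}=0$ for all $w\in RM(M)$; hence $\dv\Sigma_{M,k,b}\subseteq RM^\perp(M)$, and likewise on $\widehat M$. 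Because $RM^\perp(M)$ is by definition the $L^2(M)$-orthogonal complement of $RM(M)$ in the finite-dimensional space $V_M$, a subspace $W\subseteq RM^\perp(M)$ equals $RM^\perp(M)$ iff its orthogonal complement in $V_M$ equals $\big(RM^\perp(M)\big)^\perp=RM(M)$. Taking $W=\dv\Sigma_{M,k,b}$, this shows that \eqref{eq: div-RM-K} is equivalent to
\begin{equation*}
N(M):=\{v\in V_M:\ \ip{\dv\tau,v}{M}=0,\ \forall\tau\in\Sigma_{M,k,b}\}=RM(M),
\end{equation*}
and similarly \eqref{eq: div-RM-Khat} is equivalent to $N(\widehat M)=RM(\widehat M)$. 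So it suffices to prove $N(M)=RM(M)\Leftrightarrow N(\widehat M)=RM(\widehat M)$.

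Next I would pair the matrix Piola transform \eqref{transform-matrix} on the stress with the covariant transform \eqref{transform-vector} on the displacement. For $\tau(x)=B\widehat\tau(\widehat x)B^T$ and $v(x)=B^{-T}\widehat v(\widehat x)$, a change of variables $x=B\widehat x+b$ together with the divergence rule \eqref{eq: div} gives, since $B^TB^{-T}=I$,
\begin{equation*}
\ip{\dv\tau,v}{M}=\abs{\det B}\int_{\widehat M}\big(B\widehat{\dv}\widehat\tau\big)\cdot\big(B^{-T}\widehat v\big)\,\dx{\widehat x}=\abs{\det B}\,\ip{\widehat{\dv}\widehat\tau,\widehat v}{\widehat M}.
\end{equation*}
By hypothesis \eqref{transform-matrix} is a bijection $\Sigma_{\widehat M,k,b}\to\Sigma_{M,k,b}$, and (since $\mathcal F$ is affine and preserves the sub-elements) \eqref{transform-vector} is a bijection $V_{\widehat M}\to V_M$ that, by the invariance of $RM$ under covariant transforms recalled in Section~\ref{subsec: Piola}, restricts to a bijection $RM(\widehat M)\to RM(M)$. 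Since $\abs{\det B}\neq0$, the displayed identity shows $v\in N(M)\Leftrightarrow\widehat v\in N(\widehat M)$, while the covariant transform gives $v\in RM(M)\Leftrightarrow\widehat v\in RM(\widehat M)$. Combining these bijections yields $N(M)=RM(M)\Leftrightarrow N(\widehat M)=RM(\widehat M)$, which by the first paragraph is exactly the claimed equivalence.

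The hard and conceptually essential point is the choice of which transform to use on the test displacement. The naive approach of pushing $\dv\Sigma_{M,k,b}$ forward by \eqref{eq: div} and comparing directly with $RM^\perp(M)$ fails: the divergence rule is \emph{contravariant} ($w\mapsto Bw$), and this map does not preserve $L^2$-orthogonality, so it need not carry $RM^\perp(\widehat M)$ onto $RM^\perp(M)$. The remedy is precisely the duality reformulation above, in which the contravariant factor $B$ on $\dv\tau$ is cancelled by the covariant factor $B^{-T}$ on $v$; this is the only pairing under which the Jacobian drops out, and it works only because $RM=ND_0$ is invariant exactly under the covariant transform. The remaining verifications, namely that both transforms preserve the piecewise polynomial degree and the macro-element's sub-element structure so that the bijections on $V$, on $RM$, and on the bubble spaces are genuine, are routine consequences of $\mathcal F$ being affine and element-preserving.
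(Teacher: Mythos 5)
Your proposal is correct and follows essentially the same route as the paper: the trivial inclusion $\dv\Sigma_{M,k,b}\subseteq RM^{\perp}(M)$ reduces the claim to showing that any $v\in V_M$ annihilating $\dv\Sigma_{M,k,b}$ lies in $RM(M)$, and the paper establishes this via exactly your pairing of the matrix Piola transform \eqref{transform-matrix} on $\tau$ with the covariant transform \eqref{transform-vector} on $v$, through the same change-of-variables identity $\ip{\dv\tau,v}{M}=\det B\,\ip{\widehat{\dv}\widehat{\tau},\widehat{v}}{\widehat{M}}$ and the invariance of $RM$ under \eqref{transform-vector}. Your explicit annihilator reformulation and the remark on why the contravariant pushforward of $\dv\tau$ alone would fail are just a more formal packaging of the paper's argument, not a different method.
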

			\begin{proof}
				It suffices to show that \eqref{eq: div-RM-Khat} implies \eqref{eq: div-RM-K}, since the converse can be proved {analogously}.
				
				{Note that $\Sigma_{M,k,b}\subset H_0\left(\dv,M;\Sym\right)$ implies $\dv\Sigma_{M,k,b}\subseteq RM^{\perp}\left(M\right)$.} It suffices to show that for a {$v\in V_M$}, the following equation
				 {\begin{equation}
				\int_{M}\dv\tau\cdot v\dx{x}=0,\quad\forall\tau\in\Sigma_{M,k,b}
				\end{equation}}
				implies $v\in RM\left(M\right)$.
				
				 {Recall that the transform given by \eqref{transform-vector} sets up a one-to-one correspondence between $RM\left(\widehat{K}\right)$ and $RM\left(K\right)$.} Let ${{\widehat{v}\in V_{\widehat{M}}}}$ be given by the inverse transform of \eqref{transform-vector}. For any  {$\widehat{\tau}\in\Sigma_{\widehat{M},k,b}$}, denote its Piola transform by  {$\tau\in\Sigma_{M,k,b}$}. Since
				\begin{align*}
				\int_{\widehat{M}}\widehat{\dv}\widehat{\tau}\cdot\widehat{v}\dx{\widehat{x}}&=\int_{M}\frac{1}{\det B}\left(B^{-1}\dv\tau\right)\cdot\left(B^Tv\right)\dx{x}=\frac{1}{\det B}\int_{M}\dv\tau\cdot v\dx{x}=0,
				\end{align*}
				this {and \eqref{eq: div-RM-Khat} imply} $\widehat{v}\in RM(\widehat{M})$ and $v\in RM(M)$. This concludes the proof.
				
			\end{proof}

			Let $\{\psi_j\}_{j=1}^{N_{\sigma}}$ be a basis of  {$\Sigma_{M,k,b}$} and $\{\zeta_i\}_{i=1}^{N_u}$ be a basis of $V_M$. The divergence matrix $B_{\dv}=\left(B_{\dv,ij}\right)_{ij}$ is defined by
			\begin{equation*}
			B_{\dv,ij}=\int_{M}\dv\psi_j\cdot\zeta_i\dx{x}.
			\end{equation*}
			Clearly, note that the vanishing normal component of bubble functions on $\partial M$ leads to the trivial inclusion  {$\dv\Sigma_{M,k,b}\subseteq RM^{\perp}\left(M\right)$, \eqref{eq: div-RM-K}} holds if and only if
			\begin{equation}
			\operatorname{rank}B_{\dv}=N_{u}-\dim RM\left(M\right)= {N_{u}-6}.
			\label{eq: rank}
			\end{equation}

			\subsubsection{The definition of a macro-element}

			See Figure~\ref{fig:3dp2p1} for an illustration of the macro-element. Note that $m_5$ is the centroid of $M=x_0x_1x_2x_3$, $m_i$ is the centroid of the face of $M$, with $1\leqslant i\leqslant 4$, and $g_i$ is the  {midpoint} of the interior edges of $M$, with $1\leqslant i\leqslant 8$.

			\begin{figure}[h]
				\centering
				\begin{tikzpicture}[line width=0.5pt,scale=0.9]
				\coordinate (center) at (0,0);
				\coordinate (x0) at (3,5);
				\coordinate (x1) at (-2,0);
				\coordinate (x2) at (2,-3);
				\coordinate (x3) at (5,0);
				\coordinate (m1) at (1.6666666667,-1);
				\coordinate (m2) at (3.33333333333,0.6666666667);
				\coordinate (m3) at (2,1.6666666667);
				\coordinate (m4) at (1,0.6666666667);
				\coordinate (m5) at (2,0.5);
				\coordinate (p1) at (2.5,2.75);
				\coordinate (p2) at (0,0.25);
				\coordinate (p3) at (2,-1.25);
				\coordinate (p4) at (3.5,0.25);
				\coordinate (p5) at (1.83333333,-0.25);
				\coordinate (p6) at (2.66666667,0.583333333);
				\coordinate (p7) at (2,1.08333333333333);
				\coordinate (p8) at (1.5,0.583333333333);

				\draw (x0) -- (x1) -- (x2) -- cycle;
				\draw (x0) -- (x3) -- (x2) -- cycle;
				\draw [dashed] (x1) -- (x3);

				\draw [dashed] (x0) -- (m5) -- (m1);
				\draw [dashed] (m5) -- (x1) -- (m1);
				\draw [dashed] (m5) -- (x2) -- (m1);
				\draw [dashed] (m5) -- (x3) -- (m1);
				\draw [dashed] (m5) -- (m2);
				\draw [dashed] (m5) -- (m3);
				\draw [dashed] (m5) -- (m4);
				
				\draw (x2) -- (m2) -- (x0);
				\draw (m2) -- (x3);
				
				\draw (x2) -- (m4) -- (x0);
				\draw (m4) -- (x1);
				
				\draw [dashed] (x1) -- (m3) -- (x0);
				\draw [dashed] (m3) -- (x3);	
				
				\draw[fill=black] {(x0)} circle (1pt);
				\draw[fill=black] {(x1)} circle (1pt);
				\draw[fill=black] {(x2)} circle (1pt);
				\draw[fill=black] {(x3)} circle (1pt);

				\draw[fill=black] {(m1)} circle (1pt);
				\draw[fill=black] {(m2)} circle (1pt);
				\draw[fill=black] {(m3)} circle (1pt);
				\draw[fill=black] {(m4)} circle (1pt);	
				\draw[fill=black] {(m5)} circle (1pt);

				\draw[fill=black] {(p1)} circle (1pt);
				\draw[fill=black] {(p2)} circle (1pt);
				\draw[fill=black] {(p3)} circle (1pt);
				\draw[fill=black] {(p4)} circle (1pt);
				
				\draw[fill=black] {(p5)} circle (1pt);
				\draw[fill=black] {(p6)} circle (1pt);
				\draw[fill=black] {(p7)} circle (1pt);
				\draw[fill=black] {(p8)} circle (1pt);

				\node [right =0.1cm] at (x0) {$x_0$}; 
				\node [left =0.1cm] at (x1) {$x_1$}; 
				\node [right =0.1cm] at (x2) {$x_2$}; 
				\node [right =0.1cm] at (x3) {$x_3$}; 
				
				\node [below left =0.01cm] at (m1) {$m_1$}; 
				\node [above right =0.01cm] at (m2) {$m_2$}; 
				\node [above left =0.01cm] at (m3) {$m_3$}; 
				\node [above left =-0.1cm] at (m4) {$m_4$}; 
				\node [below = 0.01cm] at (m5) {$m_5$};

				\node [left =-0.01cm] at (p1) {$g_1$}; 
				\node [below =0.01cm] at (p2) {$g_2$}; 
				\node [right =-0.01cm] at (p3) {$g_3$}; 
				\node [below =-0.01cm] at (p4) {$g_4$}; 
				\node [left =-0.01cm] at (p5) {$g_5$}; 
				\node [above right =-0.01cm] at (p6) {$g_6$}; 
				\node [left =-0.05cm] at (p7) {$g_7$}; 
				\node [above =-0.05cm] at (p8) {$g_8$};

				\end{tikzpicture}
				
				\caption{The macro-element for the mixed  {element} of  {degree} $k=2$ in 3D.}
				\label{fig:3dp2p1}
			\end{figure}
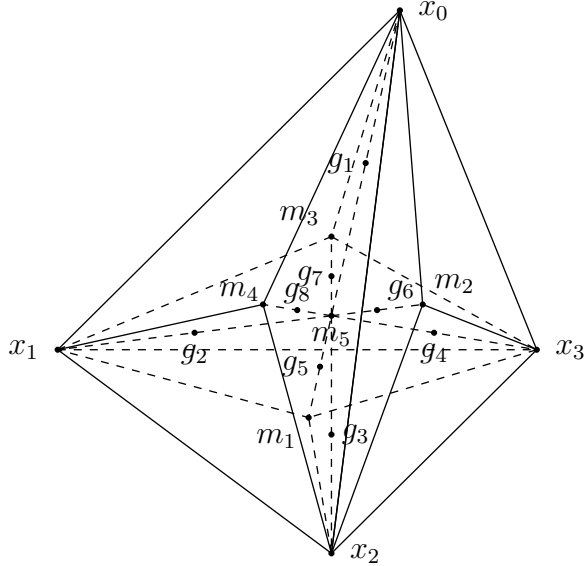
			
			Let $\Sigma_{CH}(M)$ be the $P_2$ analogy of the stress  {element} in \cite{ChenHuang2022} on the macro-element $M$,  {i.e.
		\begin{equation*}
\begin{aligned}
\Sigma_{CH}\left(M\right)=&\big\{\tau\in H\left(\dv,M;\Sym\right):\ \trace{\tau}{K}\in P_2\left(K;\Sym\right),\ \forall K\subset M,\ \text{$\tau$ is con-}\\
&\quad\text{tinuous at the nine vertices  {in $M$}, {$\bm{n}_i^T\tau\bm{n}_j$} is continuous  {on} the}\\
&\quad\text{twenty-six edges  {in $M$}},\ 1\leqslant i\leqslant j\leqslant 2\big\}.
\end{aligned}
\end{equation*}
Here $\bm{n}_i$ is the unit normal vector of the edges of the sub-elements of $M$.} The macro-element bubble function space is 
			{\begin{equation}
			\begin{aligned}
			\Sigma_{M,2,b}&=\big\{\tau\in\Sigma_{CH}(M)\cap H_0\left(\dv,M;\Sym\right):\ \text{$\tau$ vanishes at $m_i$, with $1\leqslant i\leqslant4$,}\\
			&\quad\quad\text{$\bm{n}_i^T\tau\bm{n}_j$ vanishes at the midpoint of the edges in the interior of each}\\
			&\quad\quad\text{face of $M$, with $1\leqslant i\leqslant j\leqslant2$}\big\}.
			\end{aligned}
			\label{eq: 3DP2bubble}
			\end{equation}}Note that the matrix Piola transform \eqref{transform-matrix} preserves all the properties {in \eqref{eq: 3DP2bubble}} and sets up a one-to-one correspondence between $\Sigma_{\widehat{M},2,b}$ and $\Sigma_{M,2,b}$. With Lemma \ref{Lem: reference}, take the {reference} macro-element to be the tetrahedron with vertices $\left(0,0,0\right)^T$, $\left(1,0,0\right)^T$, $\left(0,1,0\right)^T$ and $\left(0,0,1\right)^T$. Direct calculations in MATLAB show $N_{u}=12\times12=144$ and $\operatorname{rank}\left(B_{\dv}\right)=138$. This implies that \eqref{eq: rank} holds, and $\dv\Sigma_{M,2,b}=RM^{\perp}\left(M\right)$.

			The following lemma uses the  DoFs on the macro-element  {faces} to modify the Scott-Zhang interpolation.
			
			\begin{lemma}
				There exists an interpolation operator $I_h:H^1(\Omega;\Sym)\rightarrow\Sigma_{2,h}\cap H^1(\Omega;\Sym)$ such that for any macro-element face $F$,
				\begin{equation}
				\int_{F}\big((\tau-I_h\tau)\bm{n}_F\big)\cdot w\dx{S}=0,\ \forall\tau\in H^1(\Omega;\Sym),\ \forall w\in RM(M^+\cup M^-).
				\end{equation}
				Here {$\bm{n}_F$ denotes the unit normal vector of the face $F$, $M^{+}$ and $M^-$ denote  the two macro-elements sharing $F$.} {Besides, it holds}
				\begin{equation}
				\norm{I_h\tau}_{H(\dv)}\leqslant{C\norm{\tau}_{1}}.
				\label{eq: interpolation-stability3DP2}
				\end{equation}	
				\label{Lem: interpolation3DP2}
			\end{lemma}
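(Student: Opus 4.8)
The plan is to mimic the construction in Lemma~\ref{Lem: interpolation2D} and Lemma~\ref{Lem: interpolation3DP3}: start from the Scott--Zhang interpolation $I_1:H^1(\Omega;\Sym)\to\Sigma_{2,h}\cap H^1(\Omega;\Sym)$, which enjoys the approximation bound
\begin{equation*}
\sum_{K\in\T_h}\left(\norm{\nabla(\tau-I_1\tau)}_{0,K}^2+h_K^{-2}\norm{\tau-I_1\tau}_{0,K}^2\right)\leqslant C\abs{\tau}_1^2,
\end{equation*}
and then add a correction $\delta\in\Sigma_{2,h}$, built from macro-element face bubble functions supported on a single macro-element face $F$, so that $I_h\tau:=I_1\tau+\delta$ satisfies the moment condition against $RM(M^+\cup M^-)$. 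Because $\trace{w}{F}$ is an affine $\R^3$-valued function on each triangular face, the requirement \eqref{eq: delta3D} is equivalent to matching, for each component of the normal trace, a functional on $P_1(F)$. First I would fix a macro-element face $F$ and select the globally continuous symmetric-tensor bubble functions of the form $\Phi_X\,(\bm t_i\bm t_j^T+\bm t_j\bm t_i^T)$ and $\Phi_X\bm n\bm n^T$ whose normal components live on $F$, where $X$ ranges over the interior Lagrange nodes of $F$ that belong to exactly one sub-face triangle of the split of $F$.

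The key algebraic step is to verify the unisolvence of the associated adjoint system on $P_1(F)$, exactly as in \eqref{eq: adjoint} and in the $\lambda_{x_1}\lambda_{m_1}$ computation of Lemma~\ref{Lem: interpolation3DP3}. Concretely, I expect that the span of the restrictions to $F$ of the chosen $P_2$ Lagrange basis functions equals $\lambda_{a}\lambda_{b}\,P_1(F)$ for an appropriate pair of barycentric coordinates $\lambda_a,\lambda_b$ of the refined face; testing the moment equation against $p\in P_1(F)$ then forces $\int_F\lambda_a\lambda_b\,p^2\dx S=0$, hence $p=0$. Since $\int_F\phi\,p\dx S=0$ for all such bubbles $\phi$ implies $p=0$, the linear map $\delta\mapsto\bigl(\int_F(\delta\bm n_F)\cdot w\dx S\bigr)_{w}$ is surjective onto the target space of moments, so the correcting coefficients of $\delta$ can be solved face by face. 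One must check that bubbles supported on distinct macro-element faces have disjoint normal traces so the face-local modifications do not interfere; this follows because each face bubble has vanishing normal component on $\partial M$ except on its own face $F$.

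For the stability bound \eqref{eq: interpolation-stability3DP2}, I would invoke the matrix Piola transform \eqref{transform-matrix}, the divergence relation \eqref{eq: div}, the local trace inequality, and the standard scaling argument of \cite{Arnold-Winther-conforming,Hu2015trianghigh}, exactly as in the proof of Lemma~\ref{Lem: interpolation2D}: on a reference macro-element the correcting coefficients are bounded by the face moments of $\tau-I_1\tau$, which are in turn controlled by $\norm{\nabla(\tau-I_1\tau)}_{0,K}$ and $h_K^{-1}\norm{\tau-I_1\tau}_{0,K}$; summing over elements and using the Scott--Zhang estimate gives
\begin{equation*}
\sum_{K\in\T_h}\left(\norm{\nabla\delta}_{0,K}^2+h_K^{-2}\norm{\delta}_{0,K}^2\right)\leqslant C\abs{\tau}_1^2,
\end{equation*}
and therefore $\norm{I_h\tau}_{H(\dv)}\leqslant\norm{I_1\tau}_{H(\dv)}+\norm{\delta}_{H(\dv)}\leqslant C\norm{\tau}_1$.

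The main obstacle I anticipate is purely combinatorial: in this third macro-element mesh (the barycentric/centroid split with $m_5$ the centroid of $M$ and the $m_i$ the face centroids) a macro-element face $F$ of the fine mesh is itself subdivided, and I must identify which interior $P_2$ Lagrange nodes of $F$ give bubbles whose restricted spans cover $P_1(F)$ after multiplication by the appropriate $\lambda_a\lambda_b$ factor. Once the correct node set on each sub-triangulated face is pinned down and the $\lambda_a\lambda_b\,P_1(F)$ identity is confirmed, the unisolvence and the scaling argument proceed verbatim as in the $k=2$, $n=2$ and $k=3$, $n=3$ cases, so the remainder of the proof is routine.
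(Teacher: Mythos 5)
Your proposal follows the paper's proof essentially verbatim: Scott--Zhang interpolation $I_1$ plus a face-supported correction $\delta$ built from tensor-valued interior-node Lagrange functions, unisolvence of the adjoint moment system on $P_1(F)$ via a weighted span identity, and the Piola-transform scaling argument for the stability bound \eqref{eq: interpolation-stability3DP2}. The one combinatorial detail you left open is resolved in the paper as follows, and your stated node-selection rule needs a fix: on $F=x_0x_1x_2$ the correct nodes are the four interior $P_2$ Lagrange nodes of the three-triangle split of $F$, namely the face centroid $m_4$ and the midpoints $g_9,g_{10},g_{11}$ of the interior edges $x_0m_4$, $x_1m_4$, $x_2m_4$; each of these lies on two or three sub-triangles, so your criterion of nodes ``belonging to exactly one sub-face triangle'' would select the empty set. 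Moreover, the weight is a single barycentric factor rather than a product $\lambda_a\lambda_b$: since $\Phi_{g_9}=4\lambda_{x_0}\lambda_{m_4}$, $\Phi_{g_{10}}=4\lambda_{x_1}\lambda_{m_4}$, $\Phi_{g_{11}}=4\lambda_{x_2}\lambda_{m_4}$ and $\Phi_{m_4}=\lambda_{m_4}\left(2\lambda_{m_4}-1\right)$, the span of the restrictions to $F$ equals $\lambda_{m_4}\,\Span\left\{\lambda_{x_0},\lambda_{x_1},\lambda_{x_2},\lambda_{m_4}\right\}$, and because $p\in P_1(F)\subseteq\Span\left\{\lambda_{x_0},\lambda_{x_1},\lambda_{x_2},\lambda_{m_4}\right\}$ the adjoint system forces $\int_{F}\lambda_{m_4}p^2\dx{S}=0$, hence $p=0$. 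With that substitution the rest of your argument --- the component-wise reduction of the $RM$ moments to $P_1(F)$, the non-interference of corrections on distinct macro-element faces, and the scaling estimate yielding $\norm{I_h\tau}_{H(\dv)}\leqslant C\norm{\tau}_1$ --- is exactly the paper's proof.
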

			\begin{proof}
				Similar to the previous discussion for the $P_3$ element for the 3D problem, the modification on each macro-element face reduces to considering the adjoint linear system. First consider the face $F=x_0x_1x_2$, see Figure~\ref{fig:3dinterpolationp2}. It suffices to show that if some $p\in P_1(F)$ satisfies 
				\begin{equation*}
				\int_{F}p\Phi_{X}\dx{S}=0,\quad\text{for $X=g_9,g_{10},g_{11},m_{4}$,}
				\end{equation*}
				then $p=0$. Here $g_9$, $g_{10}$ and $g_{11}$ are the midpoints of  {the edges} $x_0m_4$, $x_1m_4$ and $x_2m_4$, respectively.
				
				The four $P_2$ Lagrange functions read $\Phi_{g_9}=4\lambda_{x_0}\lambda_{m_4}$, $\Phi_{g_{10}}=4\lambda_{x_1}\lambda_{m_4}$, $\Phi_{g_{11}}=4\lambda_{x_2}\lambda_{m_4}$ and $\Phi_{m_4}=\lambda_{m_4}\left(2\lambda_{m_4}-1\right)$.
				
				\begin{figure}[h]
					\centering
					\begin{tikzpicture}[line width=0.5pt,scale=0.7]
					\coordinate (center) at (0,0);
					\coordinate (x0) at (0,{3*sqrt(3)});
					\coordinate (x1) at (-3,0);
					\coordinate (x2) at (3,0);
					\coordinate (m) at (0,{sqrt(3)});

					\draw (x0) -- (x1) -- (x2) -- cycle;
					\draw (x0) -- (m) -- (x2);
					\draw (m) -- (x1);

					\coordinate (g0) at (0,{2*sqrt(3)});
					\coordinate (g1) at (-1.5,{sqrt(3)/2});
					\coordinate (g2) at (1.5,{sqrt(3)/2});
					
					\draw[fill=black] {(x0)} circle (2pt);
					\draw[fill=black] {(x1)} circle (2pt);
					\draw[fill=black] {(x2)} circle (2pt);
					\draw[fill=black] {(m)} circle (2pt);
					
					\draw[fill=blue] {(g0)} circle (2pt);
					\draw[fill=blue] {(g1)} circle (2pt);
					\draw[fill=blue] {(g2)} circle (2pt);

					\node [right =0.01cm] at (x0) {$x_0$}; 
					\node [left =0.01cm] at (x1) {$x_1$}; 
					\node [right =0.01cm] at (x2) {$x_2$}; 
					\node [below =0.1cm] at (m) {$m_4$};

					\node [right =0.01cm] at (g0) {$g_9$}; 
					\node [left =0.01cm] at (g1) {$g_{10}$}; 
					\node [right =0.01cm] at (g2) {$g_{11}$};

					\end{tikzpicture}
					
					\caption{Interior Lagrange nodes of the macro-element face.}
					\label{fig:3dinterpolationp2}
				\end{figure}
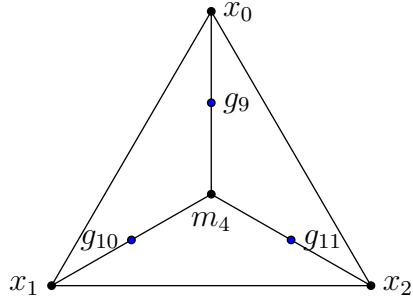
				Note that
				{\begin{equation*}
				\begin{aligned}
				\Span\left\{\trace{\Phi_{g_9}}{F},\trace{\Phi_{g_{10}}}{F},\trace{\Phi_{g_{11}}}{F},\trace{\Phi_{m_4}}{F}\right\}&=\lambda_{m_4}\Span\left\{\lambda_{x_0},\lambda_{x_1},\lambda_{x_2},\lambda_{m_4}-\frac{1}{2}\right\}\\
				&=\lambda_{m_4}\Span\left\{\lambda_{x_0},\lambda_{x_1},\lambda_{x_2},\lambda_{m_4}\right\}
				\end{aligned}
				\end{equation*}}and that $p\in P_1(F)\subseteq\Span\left\{\lambda_{x_0},\lambda_{x_1},\lambda_{x_2},\lambda_{m_4}\right\}$. This shows $\int_{F}\lambda_{m_4}p^2\dx{S}=0$ and $p=0$. 
				
				A similar argument as in \cite{Arnold-Winther-conforming,Hu2015trianghigh} can verify the stability condition \eqref{eq: interpolation-stability3DP2}.
				
			\end{proof}

			{Lemma \ref{Lem: reference} and the calculation of $\operatorname{rank}\left(B_{\dv}\right)$ lead to the following result.}
			\begin{lemma}
				For the macro-element bubble function space previously defined, it holds
				\begin{equation}
				\dv\Sigma_{M,2,b}=RM^{\perp}(M).
				\end{equation}
				\label{Lem: RMperp-3DP2}
			\end{lemma}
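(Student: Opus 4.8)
The plan is to bypass the explicit jump-based argument used for the $P_3$ element in 3D (Lemma~\ref{Lem: RMperp-3DP3}) and for the $P_2$ element in 2D (Lemma~\ref{Lem: RMperp-2D}): the twelve-tetrahedron macro-element here makes the face-by-face analysis prohibitively intricate, so I would reduce the whole identity to a single rank computation on a fixed reference macro-element, exactly along the lines prepared by Lemma~\ref{Lem: reference} and the rank criterion \eqref{eq: rank}.

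First I would verify the hypothesis of Lemma~\ref{Lem: reference}, namely that the matrix Piola transform \eqref{transform-matrix} restricts to a bijection between $\Sigma_{\widehat{M},2,b}$ and $\Sigma_{M,2,b}$. This is where care is needed, since the defining conditions in \eqref{eq: 3DP2bubble} are pointwise and normal-component based. I would check three things: (i) the transform preserves the piecewise $P_2$ structure and global $H(\dv)$ membership, i.e. continuity of normal components across the interior faces, which follows from the divergence relation \eqref{eq: div} together with the fact that $\mathcal{F}$ preserves the sub-element partition; (ii) it preserves the continuity of $\bm{n}_i^T\tau\bm{n}_j$ along the edges and its vanishing at the prescribed edge midpoints, since normal directions and the collinearity/coplanarity of the relevant nodes are affine invariants; and (iii) it preserves the vanishing of $\tau$ at the nine nodes $m_i$ and on $\partial M$. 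Granting these, $\Sigma_{M,2,b}$ and $\Sigma_{\widehat{M},2,b}$ are in one-to-one correspondence, so by Lemma~\ref{Lem: reference} it suffices to establish \eqref{eq: div-RM-Khat}, that is $\widehat{\dv}\Sigma_{\widehat{M},2,b}=RM^{\perp}(\widehat{M})$.

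Next I would invoke the rank criterion. Since every $\tau\in\Sigma_{M,2,b}\subset H_0(\dv,M;\Sym)$ already forces $\dv\Sigma_{M,2,b}\subseteq RM^{\perp}(M)$ by integration by parts, the identity \eqref{eq: div-RM-K} is equivalent to the surjectivity statement \eqref{eq: rank}, namely $\operatorname{rank}B_{\dv}=N_u-\dim RM(M)=N_u-6$. On the reference tetrahedron with vertices $(0,0,0)^T$, $(1,0,0)^T$, $(0,1,0)^T$, $(0,0,1)^T$, the displacement space $V_{\widehat{M}}$ is the discontinuous piecewise $P_1$ vector field space over the twelve sub-tetrahedra, so $N_u=12\times12=144$. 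I would then assemble $B_{\dv}$ explicitly from a chosen basis $\{\psi_j\}$ of $\Sigma_{\widehat{M},2,b}$ and the nodal basis $\{\zeta_i\}$ of $V_{\widehat{M}}$, computing the entries $\int_{\widehat{M}}\widehat{\dv}\psi_j\cdot\zeta_i\,\dx{\widehat{x}}$ as exact polynomial integrals over the sub-simplices.

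The decisive step, and the one I expect to be the main obstacle, is confirming that $\operatorname{rank}B_{\dv}=138$. This cannot be read off by inspection; it rests on a faithful, exact assembly of $B_{\dv}$ (ideally in rational arithmetic to exclude round-off ambiguity in the rank) followed by the rank evaluation. The genuine content is therefore hidden in correctly encoding $\Sigma_{\widehat{M},2,b}$, i.e. imposing precisely the continuity and vanishing constraints of \eqref{eq: 3DP2bubble} so that the computed basis really spans the intended bubble space. Once the symbolic computation returns $\operatorname{rank}B_{\dv}=138=144-6$, the equivalence \eqref{eq: rank} yields $\widehat{\dv}\Sigma_{\widehat{M},2,b}=RM^{\perp}(\widehat{M})$, and Lemma~\ref{Lem: reference} transports this back to $\dv\Sigma_{M,2,b}=RM^{\perp}(M)$, which completes the proof.
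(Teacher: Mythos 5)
Your proposal follows essentially the same route as the paper: the paper proves this lemma precisely by invoking Lemma~\ref{Lem: reference} after noting that the matrix Piola transform \eqref{transform-matrix} preserves all defining properties in \eqref{eq: 3DP2bubble}, and then verifying \eqref{eq: rank} by a direct MATLAB computation on the reference tetrahedron with vertices $\left(0,0,0\right)^T$, $\left(1,0,0\right)^T$, $\left(0,1,0\right)^T$, $\left(0,0,1\right)^T$, yielding $N_u=144$ and $\operatorname{rank}\left(B_{\dv}\right)=138=N_u-6$. Your added care in checking the bijection hypothesis pointwise and in advocating exact arithmetic for the rank evaluation is a sound elaboration of the paper's brief justification, not a different method.
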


			The discrete inf-sup condition is an immediate result of Lemma \ref{Lem: interpolation3DP2} and Lemma \ref{Lem: RMperp-3DP2}.
			\begin{theorem}
				The discrete inf-sup condition holds, \emph{i.e.} there exists a positive constant $C$ independent of $h$ such that
				\begin{equation}
				\inf_{v_h\in V_{2,h}}\sup_{\tau_h\in\Sigma_{2,h}}\frac{(\dv\tau_h,v_h)}{\norm{\tau_h}_{H(\dv)}\norm{v_h}_{0}}\geqslant C.
				\label{eq: inf-sup-3DP2}
				\end{equation}
			\end{theorem}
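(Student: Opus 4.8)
The plan is to reproduce the two-step Fortin-type argument already used in the proof of Theorem~\ref{Thm: inf-sup-2D}, now feeding in the two three-dimensional ingredients established above: the stable, face-consistent interpolation $I_h$ of Lemma~\ref{Lem: interpolation3DP2} and the surjectivity $\dv\Sigma_{M,2,b}=RM^{\perp}(M)$ of Lemma~\ref{Lem: RMperp-3DP2}. Concretely, for a fixed $v_h\in V_{2,h}$ I will construct a single $\tau_h\in\Sigma_{2,h}$ with $\dv\tau_h=v_h$ and $\norm{\tau_h}_{H(\dv)}\leqslant C\norm{v_h}_0$. Once this is done, for each $v_h$ the inner supremum in \eqref{eq: inf-sup-3DP2} is bounded below by its value at this particular $\tau_h$, namely $(\dv\tau_h,v_h)/(\norm{\tau_h}_{H(\dv)}\norm{v_h}_0)=\norm{v_h}_0/\norm{\tau_h}_{H(\dv)}\geqslant 1/C$, so the inf-sup constant is controlled uniformly in $h$.

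First I would invoke the well-posedness of the continuous elasticity problem in 3D to lift $v_h$: there exists $\tau_1\in H^1(\Omega;\Sym)$ with $\dv\tau_1=v_h$ and $\norm{\tau_1}_1\leqslant C\norm{v_h}_0$. Applying $I_h$ and testing the residual $v_h-\dv I_h\tau_1$ against an arbitrary $w\in RM(M)$ on each macro-element $M$, an integration by parts gives
\begin{equation*}
\int_M(v_h-\dv I_h\tau_1)\cdot w\dx{x}=-\int_M(\tau_1-I_h\tau_1):\eps(w)\dx{x}+\int_{\partial M}\big((\tau_1-I_h\tau_1)\bm{n}\big)\cdot w\dx{S}.
\end{equation*}
The first term vanishes since $\eps(w)=0$. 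Because every $w\in RM(M)$ is the restriction of a global rigid motion, along each face $F\subset\partial M$ it belongs to $RM(M^+\cup M^-)$, so the boundary term vanishes by the face-moment property of $I_h$ in Lemma~\ref{Lem: interpolation3DP2}. Hence $v_h-\dv I_h\tau_1\in RM^{\perp}(M)$ on every macro-element.

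Next I would use Lemma~\ref{Lem: RMperp-3DP2} to solve the local problem $\dv\tau_2=v_h-\dv I_h\tau_1$ with $\tau_2|_M\in\Sigma_{M,2,b}$ on each $M$, with the scaling-stable bound $\norm{\tau_2}_{H(\dv)}\leqslant C\norm{v_h-\dv I_h\tau_1}_0$ obtained by the matrix Piola transform and a standard scaling argument as in the earlier cases. Since $\Sigma_{M,2,b}\subset H_0(\dv,M;\Sym)$, the normal component of $\tau_2$ vanishes on $\partial M$ from both sides of each interface, so the assembled field is $H(\dv)$-conforming and lies in $\Sigma_{2,h}$. Setting $\tau_h=I_h\tau_1+\tau_2$ yields $\dv\tau_h=v_h$, and the triangle inequality together with the stability \eqref{eq: interpolation-stability3DP2} of $I_h$ and $\norm{\tau_1}_1\leqslant C\norm{v_h}_0$ gives $\norm{\tau_h}_{H(\dv)}\leqslant C\norm{v_h}_0$, completing the argument exactly as in Theorem~\ref{Thm: inf-sup-2D}.

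The genuinely delicate work is entirely absorbed into the two lemmas: the identity $\dv\Sigma_{M,2,b}=RM^{\perp}(M)$ rests on the reduction of Lemma~\ref{Lem: reference} and the computer-assisted rank count $\operatorname{rank}(B_{\dv})=138$, while the face-consistent interpolant is built in Lemma~\ref{Lem: interpolation3DP2}. Consequently the theorem itself is a routine assembly, and the only point requiring attention is the integration-by-parts step, where one must note that a rigid motion on $M$ extends to a global rigid motion so that the face-moment property of $I_h$ may legitimately be applied across each macro-element interface. I expect no further obstruction, so the discrete inf-sup condition follows immediately.
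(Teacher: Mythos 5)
Your proposal is correct and follows exactly the paper's route: the paper declares the theorem an immediate consequence of Lemma~\ref{Lem: interpolation3DP2} and Lemma~\ref{Lem: RMperp-3DP2}, with the detailed Fortin-type assembly being the same as in the proof of Theorem~\ref{Thm: inf-sup-2D}, which you reproduce faithfully (lifting $v_h$ to $\tau_1\in H^1(\Omega;\Sym)$, showing $v_h-\dv I_h\tau_1\in RM^{\perp}(M)$ by integration by parts and the face-moment property, and correcting with macro-element bubbles via the scaled right inverse of $\dv$). Your explicit remark that a rigid motion on $M$ extends to a global rigid motion, so that the moment condition with $w\in RM(M^+\cup M^-)$ applies, is a point the paper leaves implicit and is handled correctly.
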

			
			\subsubsection{The error estimates}
			The discrete inf-sup condition \eqref{eq: inf-sup-3DP2} leads to the following error estimates for the pair  {$\Sigma_{2,h}\times V_{2,h}$}.
			
			\begin{theorem}
				Let $(\sigma,u)\in\Sigma\times V$ {solve \eqref{continuousP}} and $(\sigma_h,u_h)\in\Sigma_{2,h}\times V_{2,h}$ {solve \eqref{discretizedP}}. Then the following estimate holds
				\begin{equation}
				\norm{\sigma-\sigma_h}_{H(\dv)}+\norm{u-u_h}_{0}\leqslant Ch^2\left(\norm{\sigma}_{3}+\norm{u}_{2}\right),
				\end{equation}
				 {provided that $\sigma\in H^3\left(\Omega;\Sym\right)$ and $u\in H^2\left(\Omega;\R^3\right)$.}
				\label{Thm: 3DP2estimate}
			\end{theorem}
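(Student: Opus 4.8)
The plan is to follow verbatim the strategy of Theorem~\ref{Thm: 2Destimate}: establish that the pair $\Sigma_{2,h}\times V_{2,h}$ satisfies both Brezzi stability conditions, invoke the abstract saddle-point theory of \cite{Boffi-Brezzi-Fortin2013} to obtain a quasi-optimal bound, and finish with standard approximation estimates.

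First I would confirm the two stability conditions. The discrete inf-sup condition \eqref{eq: inf-sup-3DP2} is already proved. For the K-ellipticity condition \eqref{cond: K-ellipticity}, note that the divergence of a piecewise $P_2$ symmetric tensor is a piecewise $P_1$ vector field, so $\dv\Sigma_{2,h}\subseteq V_{2,h}$. Hence any $\tau_h\in Z_h$ admits $\dv\tau_h\in V_{2,h}$ as a test function, giving $\ip{\dv\tau_h,\dv\tau_h}{\Omega}=0$ and thus $\dv\tau_h=0$ and $\norm{\tau_h}_{H(\dv)}=\norm{\tau_h}_0$. The uniform positive-definiteness of $A$ then yields $\ip{A\tau_h,\tau_h}{\Omega}\geqslant C\norm{\tau_h}_0^2=C\norm{\tau_h}_{H(\dv)}^2$, exactly as in the planar case.

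With both conditions established, the theory of \cite{Boffi-Brezzi-Fortin2013} gives at once
\begin{equation*}
\norm{\sigma-\sigma_h}_{H(\dv)}+\norm{u-u_h}_{0}\leqslant C\inf_{\tau_h\in\Sigma_{2,h},\,v_h\in V_{2,h}}\left(\norm{\sigma-\tau_h}_{H(\dv)}+\norm{u-v_h}_{0}\right).
\end{equation*}
It remains to bound the two infima by approximation. For the displacement, $V_{2,h}$ is the discontinuous piecewise $P_1$ space, so the elementwise $L^2$ projection gives $\norm{u-v_h}_0\leqslant Ch^2\norm{u}_2$ for $u\in H^2(\Omega;\R^3)$. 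For the stress, I would take the Scott--Zhang interpolation \cite{ScottZhang} into $\Sigma_{2,h}\cap H^1(\Omega;\Sym)$ (which contains the globally continuous piecewise $P_2$ symmetric tensors, since the $H^1$-conforming $P_2$ component $\sigma_c$ is part of $\Sigma_{2,h}$); this yields $\norm{\sigma-\tau_h}_1\leqslant Ch^2\norm{\sigma}_3$ for $\sigma\in H^3(\Omega;\Sym)$, and the continuous embedding $\norm{\cdot}_{H(\dv)}\leqslant C\norm{\cdot}_1$ transfers the same rate to the $H(\dv)$ norm. Substituting both estimates gives the asserted bound.

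I do not anticipate a genuine obstacle, as the only nontrivial ingredient, the discrete inf-sup condition, was already secured through the computer-assisted rank computation behind Lemma~\ref{Lem: RMperp-3DP2}. The single point deserving care is that the stress-approximating subspace is genuinely contained in $\Sigma_{2,h}$ and retains the full $P_2$ approximation order in the $H(\dv)$ norm; this holds because $\Sigma_{2,h}$ contains $\Sigma_{CH}(M)$ on each macro-element and, in particular, the $C^0$ piecewise $P_2$ tensors, for which the optimal estimate is classical.
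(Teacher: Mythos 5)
Your proposal is correct and follows essentially the same route as the paper: the paper proves Theorem~\ref{Thm: 3DP2estimate} exactly as Theorem~\ref{Thm: 2Destimate}, namely K-ellipticity (immediate from $\dv\Sigma_{2,h}\subseteq V_{2,h}$) together with the discrete inf-sup condition \eqref{eq: inf-sup-3DP2} give the quasi-optimal bound from the standard mixed theory of \cite{Boffi-Brezzi-Fortin2013}, after which standard approximation theory yields the rate $h^2$. Your additional details (the elementwise $L^2$ projection for $u$ and the Scott--Zhang interpolation into the $H^1$-conforming piecewise $P_2$ subspace for $\sigma$) simply make explicit what the paper leaves as ``standard approximation theory,'' and they are valid since the continuous piecewise $P_2$ tensors are indeed contained in $\Sigma_{2,h}$.
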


		 {Similar to the previous elements, Lemma~\ref{Lem: interpolation3DP2} and Lemma~\ref{Lem: RMperp-3DP2} lead to the following optimal error estimate for the stress in the $L^2$ norm.}
			\begin{theorem}
				The following estimate holds:
				\begin{equation}
				\norm{\sigma-\sigma_h}_{0}\leqslant Ch^3\norm{\sigma}_{3},
				\end{equation}
				provided that $\sigma\in H^3(\Omega;\Sym)$.
			\end{theorem}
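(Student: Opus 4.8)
The plan is to reuse the energy argument from the 2D and 3D $P_3$ cases, the key ingredient being a commuting interpolation assembled from Lemma~\ref{Lem: interpolation3DP2} and Lemma~\ref{Lem: RMperp-3DP2}. First I would build $\Pi_h:\ H^1(\Omega;\Sym)\to\Sigma_{2,h}$ with the commuting property
\[
\ip{\dv(\sigma-\Pi_h\sigma),v_h}{\Omega}=0,\qquad\forall v_h\in V_{2,h},
\]
following the two-stage recipe already used for $\Sigma_{2,h}\times V_{2,h}$ in 2D: apply the operator $I_h$ of Lemma~\ref{Lem: interpolation3DP2}, so that the moments of $(\sigma-I_h\sigma)\bm{n}_F$ against rigid motions vanish on every macro-element face, and then add a macro-element bubble correction $\sigma_b$, solvable on each $M$ because $\dv\Sigma_{M,2,b}=RM^{\perp}(M)$ by Lemma~\ref{Lem: RMperp-3DP2}. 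Writing $P_h$ for the $L^2$ projection onto $V_{2,h}$, this property reads $\dv\Pi_h\sigma=P_h\dv\sigma$.

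Next I would extract the orthogonality that drives the estimate. Since $\dv\Sigma_{2,h}\subseteq V_{2,h}$, every $\tau_h\in Z_h$ satisfies $\dv\tau_h=0$. Testing the second equations of \eqref{continuousP} and \eqref{discretizedP} against $v_h$ and using the commuting property gives $\ip{\dv(\sigma_h-\Pi_h\sigma),v_h}{\Omega}=0$ for all $v_h\in V_{2,h}$, whence $\sigma_h-\Pi_h\sigma\in Z_h$. Subtracting the first equations and using $\dv\tau_h=0$ yields the Galerkin orthogonality $\ip{A(\sigma-\sigma_h),\tau_h}{\Omega}=0$ for all $\tau_h\in Z_h$.

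Then I would take $\tau_h=\sigma_h-\Pi_h\sigma\in Z_h$ and decompose $\sigma-\sigma_h=(\sigma-\Pi_h\sigma)-(\sigma_h-\Pi_h\sigma)$, so that the orthogonality becomes $\ip{A(\sigma_h-\Pi_h\sigma),\sigma_h-\Pi_h\sigma}{\Omega}=\ip{A(\sigma-\Pi_h\sigma),\sigma_h-\Pi_h\sigma}{\Omega}$. The K-ellipticity condition \eqref{cond: K-ellipticity} (on $Z_h$, where $\norm{\cdot}_{H(\dv)}=\norm{\cdot}_0$) together with the boundedness of $A$ then gives $\norm{\sigma_h-\Pi_h\sigma}_0\leqslant C\norm{\sigma-\Pi_h\sigma}_0$, and a triangle inequality reduces the whole theorem to the interpolation bound $\norm{\sigma-\Pi_h\sigma}_0\leqslant Ch^3\norm{\sigma}_3$.

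The hard part will be precisely this approximation bound, since it is what sharpens the $H(\dv)$ rate $h^2$ of Theorem~\ref{Thm: 3DP2estimate} to $h^3$; the mechanism is that $\sigma_h-\Pi_h\sigma$ is \emph{exactly} divergence-free, so the stress error no longer sees the lower-order divergence approximation. To prove it I would first note that $\Pi_h$ reproduces global $P_2$ stresses (for such $\sigma$ the Scott--Zhang part is exact and no bubble correction is needed), so that a Bramble--Hilbert argument combined with the $H^1\to H(\dv)$ stability of $\Pi_h$ gives the optimal rate. Equivalently, one estimates the two pieces directly: $\norm{\sigma-I_h\sigma}_0\leqslant Ch^3\norm{\sigma}_3$ from \cite{ScottZhang}, while a Piola-transform scaling argument as in \cite{Arnold-Winther-conforming,Hu2015trianghigh}, using the bounded right inverse of $\dv$ on the reference macro-element supplied by Lemma~\ref{Lem: RMperp-3DP2}, gives $\norm{\sigma_b}_{0,M}\leqslant Ch\norm{\dv(\sigma-I_h\sigma)}_{0,M}\leqslant Ch^3\norm{\sigma}_{3,M}$; summing over macro-elements closes the argument.
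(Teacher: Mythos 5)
Your proposal is correct and is exactly the argument the paper intends: the paper's proof is a one-line reference to Lemma~\ref{Lem: interpolation3DP2} and Lemma~\ref{Lem: RMperp-3DP2}, which yield the commuting interpolation $\Pi_h$ with $\ip{\dv(\sigma-\Pi_h\sigma),v_h}{\Omega}=0$, followed by the standard Arnold--Winther-type duality-free energy argument ($\sigma_h-\Pi_h\sigma$ divergence-free, K-ellipticity, triangle inequality, interpolation bound) that you spell out. Your fleshed-out treatment of the approximation bound, including the Piola-scaling estimate $\norm{\sigma_b}_{0,M}\leqslant Ch\norm{\dv(\sigma-I_h\sigma)}_{0,M}$ for the bubble correction, matches the cited arguments of \cite{Arnold-Winther-conforming,Hu2015trianghigh}.
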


		}

		\section{An exact sequence in two dimensions}

		This section constructs an $H^2$ conforming composite finite element space in 2D. A unisolvent set of DoFs is given to determine inter-macroelement continuity {conditions}. This element together with the {symmetric} $H(\dv)$-conforming finite element in {Section~\ref{subsec: 2DP2-element}} form a discrete elasticity sequence in 2D.
		
		\subsection{An $H^2$ conforming finite element space}
		
		Let $M=\cup_{i=1}^4K_i$ be a macro-element from {Section~\ref{subsubsec: 2DP2-macroelement}}. Let $x_i$ be the vertex of $M$, $m_i$ be the midpoint of the edge of $M$, and $d_i$ be the midpoint of  {the interior edge $E_i$}, with $1\leqslant i\leqslant3$, see Figure \ref{fig:2dH2}. Let $\bm{t}$ and $\bm{n}$ be the tangential vector and the normal vector of the edge of the macro-element. The shape function space of the $H^2$ finite element on $M$ is piecewise quartic polynomials satisfying the following continuity {conditions}:
		\begin{equation}
			\begin{aligned}
			{U_M}:=&\big\{u\in L^2\left(M\right):\ \trace{u}{K_i}\in P_4(K_i),\ 1\leqslant i\leqslant 4,\ \text{$u$ is $C^2$ at $x_0$},\\
			&\quad\text{$u$, $\nabla u$, $\partial_{\bm{tt}}^2u$ and $\partial_{\bm{tn}}^2u$ are continuous at $m_i$, $i=1,2,3$,}\\
			&\quad\text{$u$ is continuous at $d_i$, $i=1,3$}\big\}.
			\end{aligned}
			\end{equation}

		\begin{figure}[h]
			\centering
			\begin{tikzpicture}[line width=0.5pt,scale=1]
			\coordinate (center) at (0,0);
			\coordinate (V1) at (2,3);
			\coordinate (V2) at (-2,0);
			\coordinate (V3) at (3,0);
			
			\coordinate (M1) at (0.5,0);
			\coordinate (M2) at (0,1.5);
			\coordinate (M3) at (2.5,1.5);
			
			\coordinate (C1) at (0.25,0.75);
			\coordinate (C2) at (1.25,1.5);
			\coordinate (C3) at (1.5,0.75);

			\draw[fill=white] {(V1)} circle (6pt);
			\draw[fill=white] {(V1)} circle (4pt);
			\draw[fill=black] {(V1)} circle (2pt);
			\draw[fill=white] {(V2)} circle (6pt);
			\draw[fill=white] {(V2)} circle (4pt);
			\draw[fill=black] {(V2)} circle (2pt);
			\draw[fill=white] {(V3)} circle (6pt);
			\draw[fill=white] {(V3)} circle (4pt);
			\draw[fill=black] {(V3)} circle (2pt);
			
			
			\draw[thick,->, >=latex] (M1) -- ($(M1)!0.6cm!90:(V2)$);				\draw[thick,->, >=latex] (M1) -- ($(M1)!0.4cm!90:(V2)$);
			\draw[thick,->, >=latex] (M1) -- ($(M1)!0.6cm!0:(V2)$);
			
			\draw[thick,->, >=latex] (M3) -- ($(M3)!0.6cm!90:(V3)$);				\draw[thick,->, >=latex] (M3) -- ($(M3)!0.4cm!90:(V3)$);
			\draw[thick,->, >=latex] (M3) -- ($(M3)!0.6cm!0:(V3)$);
			
			\draw[thick,->, >=latex] (M2) -- ($(M2)!0.6cm!90:(V1)$);				\draw[thick,->, >=latex] (M2) -- ($(M2)!0.4cm!90:(V1)$);
			\draw[thick,->, >=latex] (M2) -- ($(M2)!0.6cm!0:(V1)$);

			\draw (V1) -- (V2) -- (V3) -- cycle;
			\draw (M2) -- (M1) -- (M3);
			\draw (M1) -- (V1);
			
			\draw[fill=black] {(M1)} circle (1pt);
			\draw[fill=black] {(M2)} circle (1pt);
			\draw[fill=black] {(M3)} circle (1pt);
			\draw[color=black, fill=black] {(C1)} circle (1pt);
			\draw[fill=black] {(C2)} circle (1pt);
			\draw[color=black, fill=black] {(C3)} circle (1pt);
			
			\node [above =0.1cm] at (V1) {$x_0$}; 
			\node [below =0.15cm] at (V2) {$x_1$}; 
			\node [below =0.15cm] at (V3) {$x_2$}; 
			\node [below right=0.15cm] at (M1) {$m_1$}; 
			\node [left =0.2cm] at (M2) {$m_2$}; 
			\node [below right=0.02cm] at (M3) {$m_3$}; 
			\node [below =0.1cm] at (C1) {$d_1$}; 
			\node [below left =0.05cm] at (C2) {$d_2$}; 
			\node [below =0.05cm] at (C3) {$d_3$};

			\coordinate (K1) at (-0.5,0.2);
			\coordinate (K2) at (0.83333,1.5);
			\coordinate (K3) at (1.66666,1.5);
			\coordinate (K4) at (2,0.2);
			\node [above] at (K1) {$K_1$}; 
			\node [above] at (K2) {$K_2$}; 
			\node [above right] at (K3) {$K_3$}; 
			\node [above right] at (K4) {$K_4$};

			\coordinate (E1) at (0.2,0.95);
			\coordinate (E2) at (1.4,1.8);
			\coordinate (E3) at (1.8,1);
			\node at (E1) {$E_1$}; 
			\node at (E2) {$E_2$}; 
			\node at (E3) {$E_3$};

			\end{tikzpicture}
			
			\caption{Twenty-seven DoFs for each macro-element.}
			\label{fig:2dH2}
		\end{figure}
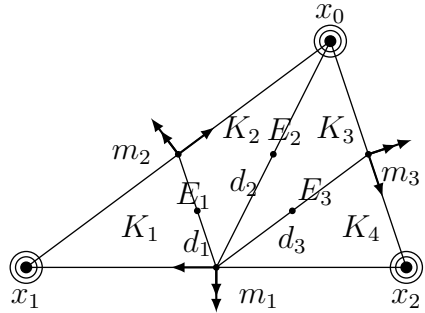
		
		The local DoFs {of $U_M$} are given as follows:
		\begin{enumerate}
			\item The value of $u$, $\nabla u$, $\nabla^2u$ at all the vertices of $M$.
			\item The value of $\partial_{\bm{n}}u$, $\partial_{\bm{tn}}^2u$, $\partial_{\bm{tt}}^2u$ at all the midpoints of edges of $M$.
		\end{enumerate}

		The following lemma states that, on each macro-element $M$, the shape function space {${U_M}$} belongs to $C^1(M)$.
		\begin{lemma}
			It holds ${U_M}\subset C^1(M)$.
			\label{C1}
		\end{lemma}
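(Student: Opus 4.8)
The plan is to show that the nodal continuity conditions defining $U_M$, together with the piecewise $P_4$ structure and the special geometry of the macro-element, force genuine $C^1$ continuity across each of the three interior edges $E_1,E_2,E_3$. Since each piece is polynomial, it suffices to prove that for every interior edge $E$ both the jump $\jumpE{u}{E}$ and the normal-derivative jump $\jumpE{\partial_{\bm n}u}{E}$ vanish identically; continuity of the tangential derivative then follows from that of $u$, so that $\jumpE{\nabla u}{E}=0$, and hence $U_M\subset C^1(M)$.

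First I would establish $C^0$ continuity edge by edge. Along $E$ the jump $g:=\jumpE{u}{E}$ is a univariate polynomial of degree at most $4$, so it suffices to exhibit five vanishing conditions. On the central edge $E_2=m_1x_0$, the $C^2$ condition at $x_0$ makes $g$ vanish to third order there, while continuity of $u$ and $\nabla u$ at $m_1$ makes it vanish to second order at $m_1$; since $3+2>4$, we get $g\equiv0$. On $E_1=m_1m_2$ and $E_3=m_1m_3$ the endpoint data give only second-order vanishing at each endpoint, i.e. $2+2=4$ conditions, which is exactly why the extra nodal value at the edge midpoint $d_1$, resp. $d_3$, is imposed: it supplies the fifth condition and forces $g\equiv0$. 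This also explains why no condition at $d_2$ is needed.

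I expect the main obstacle to be the normal-derivative continuity. Along $E$ the jump $h:=\jumpE{\partial_{\bm n}u}{E}$ has degree at most $3$, so I need second-order vanishing at both endpoints. Continuity of $\nabla u$ at the interior vertices gives $h=0$ at the endpoints, but the naive endpoint conditions control only two of the three second derivatives there, so $h'$ is not obviously zero. The resolving idea is that the $C^0$ continuity across $E$ just established implies that $\partial^2_{\bm t_E\bm t_E}u$ is continuous across $E$, i.e. $\bm t_E^{T}\jumpE{\nabla^2 u}{E}\bm t_E=0$ at each endpoint. At any endpoint $P\in\{m_1,m_2,m_3\}$ the nodal conditions make $\jumpE{\nabla^2u}{E}(P)$ a scalar multiple of $\bm n_P\bm n_P^{T}$, where $\bm n_P$ is normal to the macro-edge through $P$; combined with the previous identity this multiple must vanish unless $E$ is parallel to that macro-edge. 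Since $E_1\parallel x_0x_2$, $E_3\parallel x_0x_1$, and $E_2$ is the median from $x_0$, none of the interior edges is parallel to the relevant macro-edge at its endpoints for a nondegenerate triangle, so the full Hessian jump vanishes at every endpoint (at $x_0$ this is immediate from the $C^2$ condition). Consequently $h'$ also vanishes at both endpoints, $h$ vanishes to second order at each, and $2+2>3$ forces $h\equiv0$.

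Collecting the results, $\jumpE{u}{E}=0$ and $\jumpE{\partial_{\bm n}u}{E}=0$ for every $E\in\{E_1,E_2,E_3\}$ give $\jumpE{\nabla u}{E}=0$, so $u$ and $\nabla u$ are continuous across all interior edges; being polynomial on each $K_i$, $u$ is therefore $C^1$ on all of $M$. The crux of the whole argument is the geometric non-parallelism step, which upgrades the partial (two-component) second-order continuity at $m_1,m_2,m_3$ to full Hessian continuity by exploiting the already-proved $C^0$ continuity; everything else is degree counting on univariate jump polynomials.
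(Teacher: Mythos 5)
Your proposal is correct and takes essentially the same route as the paper's proof: the identical zero-counting argument for the jump of $u$ on each interior edge (with the roles of $x_0$, $d_1$, $d_3$ exactly as you describe), then the same key upgrade to full Hessian continuity at $m_1,m_2,m_3$, and finally the same multiplicity count for the degree-$3$ normal-derivative jump. Your observation that the nodal conditions force the Hessian jump to be a multiple of $\bm{n}_P\bm{n}_P^T$, which is then killed by testing with $\bm{t}_E\bm{t}_E^T$ unless $E$ is parallel to the macro-edge, is precisely the dual formulation of the paper's argument that $\bm{l}\bm{l}^T,\bm{t}\bm{t}^T,\bm{t}\bm{n}^T+\bm{n}\bm{t}^T$ form a basis of $\Sym$ under the same non-parallelism condition.
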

		\begin{proof}
			For any $u\in {U_M}$, {$u$} is $C^2$ at $x_0$, $C^1$ at $m_i$ with $i=1,2,3$, and $C^0$ at $d_i$ with $i=1,3$. These facts imply $u$ is continuous across {the edge} $E_i$ with $i=1,2,3$. Therefore $u\in C(M)$. 
			
			Let $\bm{l}$ be the tangential vector  {of} the edge $m_1m_2$. {Note that $$\partial_{\bm{ll}}^2u=\nabla^2u:\bm{ll}^T.$$This and} the continuity of $u$ across $m_1m_2$ imply $\nabla^2u:\bm{ll}^T$ is continuous at $m_2$. Note that $\bm{ll}^T,\bm{tt}^T,\bm{tn}^T+\bm{nt}^T$ form a basis of $\Sym$. This shows $\nabla^2u$ is continuous at $m_2$. Similar arguments prove the cases for $m_1$ and $m_3$. Therefore $\nabla^2u$ is continuous at $m_i$ for $i=1,2,3$. 
			
			The $C^2$ continuity of $u$ at $m_1$ and $m_2$ implies that $\nabla u$ is continuous across {the edge} $m_1m_2$. Similar arguments prove the cases for $m_1x_0$ and $m_1m_3$. This concludes the proof.
		\end{proof}
		
		The following theorem shows that the DoFs in (1)-(2) are unisolvent for ${U_M}$.
		
		\begin{theorem}
			The DoFs in (1)-(2) are unisolvent for ${U_M}$.
			\label{unisolvence}
		\end{theorem}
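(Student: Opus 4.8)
The plan is to confirm that the listed functionals number exactly $\dim {U_M}$ and then to show that a function in ${U_M}$ annihilated by all of them is identically zero. Counting the degrees of freedom gives $3\times 6=18$ from the three vertices (value, gradient, Hessian) and $3\times 3 = 9$ from the three edge midpoints, hence $27$ in total. For the matching dimension bound I would start from the unconstrained piecewise space $\bigoplus_{i=1}^4 P_4(K_i)$, of dimension $4\times 15 = 60$, and count the continuity conditions defining ${U_M}$: six from $C^2$-continuity at $x_0$ (shared by $K_2,K_3$), fifteen at $m_1$ (the five scalar quantities $u,\nabla u,\partial_{\bm{tt}}^2 u,\partial_{\bm{tn}}^2 u$, each made single-valued among the four pieces), five each at $m_2$ and $m_3$, and one each at $d_1,d_3$, for at most $33$ conditions. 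Thus $\dim {U_M}\ge 60-33 = 27$. Since each listed functional is well defined on ${U_M}$, proving that the evaluation map ${U_M}\to\mathbb{R}^{27}$ is injective yields $\dim {U_M}\le 27$; combined with the lower bound this gives $\dim {U_M}=27$ and unisolvence.

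So let $u\in {U_M}$ have all $27$ degrees of freedom equal to zero. The first task is to show $u$ and $\nabla u$ vanish on $\partial M$. On the macro-edge $x_1x_2$, which is split by $m_1$ into edges of $K_1$ and $K_4$, the continuity of $u,\nabla u,\partial_{\bm{tt}}^2 u$ at $m_1$ makes the restriction $u|_{x_1x_2}$ a $C^2$ quartic spline with one interior knot; the vertex data $u=\partial_{\bm{t}} u=\partial_{\bm{tt}}^2 u = 0$ at $x_1,x_2$ together with $\partial_{\bm{tt}}^2 u = 0$ at $m_1$ are seven conditions on the seven-dimensional spline space and force $u|_{x_1x_2}=0$. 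Likewise, continuity of $\nabla u$ and $\partial_{\bm{tn}}^2 u$ at $m_1$ makes $\partial_{\bm{n}} u|_{x_1x_2}$ a $C^1$ cubic spline, killed by $\partial_{\bm{n}} u = \partial_{\bm{tn}}^2 u = 0$ at $x_1,m_1,x_2$. Running the same argument on $x_0x_1$ (knot $m_2$) and $x_0x_2$ (knot $m_3$) gives $u=\nabla u = 0$ on all of $\partial M$.

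With vanishing Cauchy data on $\partial M$, each $u|_{K_j}$ is divisible by the squares of the barycentric coordinates vanishing on those edges of $K_j$ contained in $\partial M$. The corner triangles $K_1,K_4$ have two such edges, so $u|_{K_1}=c_1\mu_{m_1}^2\mu_{m_2}^2$ and $u|_{K_4}=c_4(\cdots)^2(\cdots)^2$ for scalars $c_1,c_4$, where $\mu_{m_1},\mu_{m_2}$ are the barycentric coordinates of $K_1$ at $m_1,m_2$; the triangles $K_2,K_3$ at $x_0$ have only one, so $u|_{K_2}=\nu_{m_1}^2 q_2$ and $u|_{K_3}=\rho_{m_1}^2 q_3$ with $q_2,q_3\in P_2$. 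Imposing the already-enforced Hessian datum $\nabla^2 u(x_0)=0$ on the rank-one leading term of $u|_{K_2}$ (resp. $u|_{K_3}$) forces $q_2(x_0)=q_3(x_0)=0$. It then remains to invoke the interelement regularity $u\in C^1(M)$ from Lemma~\ref{C1}, together with the $C^2$ matching at $x_0$, across the interior edges $E_1,E_2,E_3$: continuity of $u$ across $E_1$ already yields $q_2|_{E_1}=c_1 s^2$ (so $q_2(m_1)=0$), and matching the normal derivatives across $E_1,E_2,E_3$ produces a homogeneous linear system in the finitely many remaining unknowns $c_1,c_4,q_2,q_3$.

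The main obstacle is precisely this last step: verifying that the interface system has only the trivial solution, namely $c_1=c_4=0$ and $q_2=q_3=0$. I expect to settle it by a direct computation in barycentric coordinates along each interior edge, exploiting the symmetry between the configurations $(E_1;K_1,K_2)$ and $(E_3;K_4,K_3)$ and handling $E_2$ through the extra $C^2$ data available at $x_0$, rather than through any global shortcut. Once every $u|_{K_j}$ is shown to vanish, the evaluation map is injective, and with the dimension count of the first paragraph the unisolvence of the degrees of freedom (1)–(2) for ${U_M}$ follows.
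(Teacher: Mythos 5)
Your setup agrees with the paper's own proof almost line for line: the same count ($27$ DoFs against $\dim U_M\geqslant 60-33=27$, with your breakdown $6+15+5+5+1+1$ of the continuity constraints matching the paper's $6+25+2$), the same reduction to vanishing Cauchy data on $\partial M$ by one-dimensional arguments along the split macro-edges (the paper phrases this as $\partial_{\bm n}u\in P_3$ vanishing to first order at both ends of each half-edge, and $u=\lambda_{m_1}^3q$ with $q$ piecewise linear; your $C^2$-quartic/$C^1$-cubic spline counts are equivalent, though note that ``seven conditions on a seven-dimensional space'' does not by itself give injectivity --- you must check independence, which does hold here), and the same factorization $u|_{K_1}=c_1\lambda_{m_1}^2\lambda_{m_2}^2$, $u|_{K_4}=c_4\lambda_{m_1}^2\lambda_{m_3}^2$, $u|_{K_{2}}=\lambda_{m_1}^2q_2$, $u|_{K_3}=\lambda_{m_1}^2q_3$, with $\nabla^2u(x_0)=0$ forcing $q_2(x_0)=q_3(x_0)=0$.

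The genuine gap is the step you explicitly defer: showing the homogeneous interface system in $(c_1,c_4,q_2,q_3)$ admits only the trivial solution. ``I expect to settle it by a direct computation'' is a plan, not a proof, and this is precisely where the remaining DoFs and the special macro-element geometry must be brought to bear. The paper (its Step 4, written with $u=\lambda_{m_1}^2p$, $p\in C(M)$ piecewise quadratic) closes it by three mechanisms your outline does not identify: (i) $\nabla u(m_1)=0$ together with $p(m_1)=0$ gives $\nabla p(m_1)=0$, which combined with $p(x_0)=0$ forces $\trace{p}{x_0m_1}=0$; (ii) the continuity of the \emph{full Hessian} $\nabla^2u$ at $m_1$ --- not part of the definition of $U_M$, but established in the proof of Lemma~\ref{C1} --- transfers to continuity of $\nabla^2p$ at $m_1$, and combining $\partial_{\bm l\bm l}^2p(m_1)=0$ (from $\trace{p}{x_0m_1}=0$) with $\partial_{\bm t\bm t}^2p(m_1)=\partial_{\bm t\bm n}^2p(m_1)=0$ (from $p$ and $\partial_{\bm n}p$ vanishing along $x_1x_2$, seen from $K_1$ and $K_4$) against the basis $\bm l\bm l^T,\bm t\bm t^T,\bm t\bm n^T+\bm n\bm t^T$ of $\Sym$ yields $\nabla^2p(m_1)=0$ and hence $c_1=c_4=0$; (iii) because $m_2$ is the midpoint of $x_0x_1$ (collinearity of $x_0,m_2,x_1$), the hat function $\lambda_{m_1}$ is a single linear function on $K_1\cup K_2$, so $\nabla\lambda_{m_1}$ is continuous across $E_1$ and the $C^1$-continuity of $u$ propagates to continuity of $\nabla p$ across $E_1$; with $p|_{K_1}=0$ this forces $p|_{K_2}=c\lambda_{x_0}^2$, and $\trace{p}{x_0m_1}=0$ kills $c$, and symmetrically across $E_3$. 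Your sketch even routes the argument differently than it must go --- you propose handling $E_2$ ``through the extra $C^2$ data at $x_0$,'' whereas the paper never propagates across $E_2$ at all; it uses the vanishing of $p$ on $E_2$ as data and propagates from $K_1$ and $K_4$ inward. Since the entire unisolvence claim hinges on this interface analysis, the proposal is incomplete at exactly its decisive point.
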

		\begin{proof}
			There are twenty-seven DoFs {in (1)-(2). The continuity of functions in ${U_M}$} at $x_0$, $m_i$ and $d_i$($1\leqslant i\leqslant 3$) give six, twenty-five and two constraints, respectively. This implies
			\begin{equation*}
				\dim {U_M}\geqslant\sum_{i=1}^4\dim P_4\left(K_i\right)-\left(6+25+2\right)=27.
			\end{equation*}
			It suffices to show that if for some $u\in {U_M}$, the values of $u$, $\nabla u$, $\nabla^2u$ vanish at $x_i$ for $0\leqslant i\leqslant2$, and the values of $\partial_{\bm{n}}u$, $\partial_{\bm{tn}}^2u$, $\partial_{\bm{tt}}^2u$ vanish at $m_i$ for $1\leqslant i\leqslant3$, then $u$ must be $0$. The proof is divided into four steps.

			\textbf{Step 1 shows $\trace{\partial_{\bm{n}}u}{\partial M}=0$.} Take {the edge} $x_1m_1$ as an example. The fact that $\partial_{\bm{n}}u$, $\partial_{\bm{tn}}^2u$ vanish at $x_1$ and $m_1$ implies that $\partial_{\bm{n}}u$ {vanishes} on $x_1m_1$. Similar arguments show that $\partial_{\bm{n}}u$ vanishes on $\partial M$.
			
			\textbf{Step 2 shows $\trace{u}{\partial M}=0$.} Take {the edge} $x_1x_2$ as an example. Note that $u$, $\partial_{\bm{t}}u$, $\partial_{\bm{tt}}^2u$ vanish at $x_1$, $x_2$, and that $u$ is continuous at $m_1$. This shows $$u=\lambda_{m_1}^3q,\quad\text{on $x_1x_2$}.$$ with a piecewise linear continuous function $q$ on $x_1x_2$. Note that $q$ and $\partial_{\bm{t}}u$ are continuous at $m_1$ and that $\partial_{\bm{tt}}^2u(m_1)=0$. These facts imply that $q=0$ on $x_1x_2$. Therefore $u$  {vanishes} on $x_1x_2$. Similar arguments show that $u$ vanishes on $\partial M$.

%

			\textbf{Step 3 shows that $u=\lambda_{m_1}^2p$ on $M$ for some $p\in C(M)$.} Note that $u$, $\partial_{\bm{n}}u$ vanish on $\partial M$. This implies {\begin{equation}
					u=\lambda_{m_1}^2p
					\label{eq: u-p}
				\end{equation}
				with a piecewise quadratic function $p$ on $M$ that satisfies
				\begin{equation}
					\trace{p}{K_1}=c_1\lambda_{m_2}^2,\ \trace{p}{K_4}=c_2\lambda_{m_3}^2
					\label{eq: p-K1-K4}
				\end{equation}
				with some constants $c_1$, $c_2$}. The continuity of $u$ across the  {edges} $m_1m_2$, $m_1x_0$, $m_1m_3$ {and \eqref{eq: u-p} imply} that $p$ is continuous across these three edges. Therefore $p\in C(M)$,  {and $p\left(m_1\right)=0$.}
			
			\textbf{Step 4 shows $p=0$.} On each element $K_i$, {taking the first and second order derivatives of both sides of \eqref{eq: u-p} yields}
			\begin{equation}
			\nabla u=2\lambda_{m_1}p\nabla\lambda_{m_1}+\lambda_{m_1}^2\nabla p,
			\label{eq: nablau}
			\end{equation}
			\begin{equation}
			\nabla^2u=\lambda_{m_1}^2\nabla^2p+2\lambda_{m_1}\big(\nabla\lambda_{m_1}(\nabla p)^T+\nabla p(\nabla\lambda_{m_1})^T\big)+2p\big(\nabla\lambda_{m_1}(\nabla\lambda_{m_1})^T\big).
			\label{eq: nabla2u}
			\end{equation}
			{Here the gradient is considered as a column vector.} Since $\nabla u(m_1)=0$ and $p(m_1)=0$, \eqref{eq: nablau} shows $\nabla p(m_1)=0$. Besides, $\nabla^2u(x_0)=0$ and \eqref{eq: nabla2u} imply $p(x_0)=0$. These two facts together with $p(m_1)=0$ lead to $\trace{p}{x_0m_1}=0$. 
			
			The continuity of $\nabla^2 u$ at $m_1$, $p(m_1)=\nabla p(m_1)=0$ {and \eqref{eq: nabla2u}} imply that $\nabla^2p$ is continuous at $m_1$. Let $\bm{l}$ be the tangential vector of {the edge} $x_0m_1$, $\bm{t}$ and $\bm{n}$ be the tangential vector and the normal vector of $x_1x_2$. The fact $\trace{p}{x_0m_1}=0$ implies $\partial_{\bm{l}\bm{l}}^2p(m_1)=0$. Besides, the facts $\trace{p}{x_1x_2}=0$ and $\trace{\partial_{\bm{n}}p}{x_1x_2}=0$ imply {$$\partial_{\bm{t}\bm{t}}^2p(m_1)=\partial_{\bm{t}\bm{n}}^2p(m_1)=0.$$}Note that $\bm{l}\bm{l}^T,\bm{t}\bm{t}^T,\bm{n}\bm{t}^T+\bm{t}\bm{n}^T$ form a basis of $\Sym$. This shows $\nabla^2p(m_1)=0$. {Therefore $c_1=c_2=0$,} and $\trace{p}{K_1}=0$, $\trace{p}{K_4}=0$. 
			
			 {Note that $\nabla\lambda_{m_1}$ is continuous across the  {edges} $m_1m_2$ and {$m_1m_3$}. This,} the $C^1$ continuity of $u$ and \eqref{eq: nablau} imply $\nabla p$ is continuous across $m_1m_2$ and $m_1m_3$. This and $\trace{p}{K_1}=0$ imply {$\trace{p}{K_2}=c\lambda_{x_0}^2$, with $c\in\R$}. The fact $\trace{p}{x_0m_1}=0$ implies {$c=0$} and $\trace{p}{K_2}=0$. Similar arguments show $\trace{p}{K_3}=0$. Therefore $p=0$, $u=\lambda_{m_1}^2p=0$. This concludes the proof.

		\end{proof}
		
		
		{Lemma \ref{C1} and the proof of Theorem \ref{unisolvence} imply that} an $H^2$ conforming finite element space ${U_h}$ {can be} constructed in the following way:
		 {\begin{equation}
			\begin{aligned}
			{U_h}:=\big\{&u\in L^2(\Omega):\ \left.u\right|_{M}\in {U_M},\ \forall M\in\M_h,\ \text{$u$ is $C^2$ at {vertices of $\M_h$}},\\ 
			&\text{$\partial_{\bm{n}}u$, $\partial_{\bm{tn}}^2u$ and $\partial_{\bm{tt}}^2u$ are continuous at midpoints of {edges of $\M_h$}}\big\}.
			\end{aligned}
			\end{equation}}


		\subsection{The sequence}
		
		For a domain $\Omega\subset\R^2$, the elasticity sequence reads
		\begin{equation}
		P_1(\Omega)\subset C^{\infty}(\Omega)\stackrel{J}{\longrightarrow}C^{\infty}(\Omega;\Sym)\stackrel{\dv}{\longrightarrow}C^{\infty}(\Omega;\R^2)\longrightarrow0,
		\end{equation}
		and its Sobolev space version reads
		\begin{equation}
		P_1(\Omega)\subset H^2(\Omega)\stackrel{J}{\longrightarrow}H(\dv,\Omega;\Sym)\stackrel{\dv}{\longrightarrow}L^2(\Omega;\R^2)\longrightarrow0.
		\end{equation}
		
		Recall the finite element spaces $\Sigma_{2,h}$ {from Subsection~\ref{subsec: 2DP2-element}} and $V_{2,h}$ {from \eqref{eq: V-space}}. The following discrete elasticity sequence will {be proved} to be exact in this section:
		\begin{equation}
		P_1(\Omega)\subset {U_h}\stackrel{J}{\longrightarrow}\Sigma_{2,h}\stackrel{\dv}{\longrightarrow}V_{2,h}\longrightarrow0.
		\label{discrete_sequence}
		\end{equation}
		
		{The following lemma} characterizes the continuity {conditions} of the macro-element bubble functions, and Lemma \ref{Lem: inclusion} proves the inclusion relationship $J{U_h}\subseteq\Sigma_{2,h}$.
		
		\begin{lemma}
			Let the macro-element bubble function space {$\Sigma_{M,2,b}$} be given as in Section \ref{sec: 2Dmacrobubble}. It holds $\Sigma_{M,2,b}=\Sigma_{\partial M,2,0}^*$ with
			\begin{equation}
			\begin{aligned}
			\Sigma_{\partial M,2,0}^*=\big\{\sigma\in H_0\left(\dv,M;\Sym\right):\ \left.\sigma\right|_{K_i}\in P_2(K_i;\Sym),\ 1\leqslant i\leqslant 4,\\
			\text{$\sigma$ is continuous at $m_j$},\ 1\leqslant j\leqslant 3\big\}.
			\end{aligned}
			\end{equation}
			\label{Lem: macrobubble}
		\end{lemma}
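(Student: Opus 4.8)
The plan is to establish the identity by proving the two inclusions separately. The inclusion $\Sigma_{M,2,b}\subseteq\Sigma_{\partial M,2,0}^*$ is the easy direction and reduces to checking the three families of generators from Section~\ref{sec: 2Dmacrobubble}: the piecewise $H(\dv;\Sym)$-bubbles in (1) vanish on every sub-element boundary, hence lie in $H_0(\dv,M;\Sym)$ and are trivially continuous at each $m_j$; the functions in (2) are globally continuous and vanish on $\partial M$; and each function $\left(\trace{\Phi_{m_j}}{M}\right)\bm{t}\bm{t}^T$ in (3) is $P_2$ on every $K_i$, continuous on $M$, and has vanishing normal trace on $\partial M$, since $\bm{t}\bm{t}^T\bm{n}=0$ along the collinear macro-edge through $m_j$ while $\Phi_{m_j}$ vanishes on the other two edges of $M$. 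All generators therefore lie in $\Sigma_{\partial M,2,0}^*$.

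The substance lies in the reverse inclusion $\Sigma_{\partial M,2,0}^*\subseteq\Sigma_{M,2,b}$. The first step, which I expect to be the main obstacle, is to show that every $\sigma\in\Sigma_{\partial M,2,0}^*$ vanishes at the three macro-vertices. At $x_1$ and $x_2$ the two boundary edges meeting there both carry vanishing normal trace, and a symmetric matrix annihilated by two linearly independent normals is zero. At $x_0$ the argument is more delicate: $x_0$ lies in two sub-elements $K_2,K_3$, and one must combine the zero normal traces on $x_0x_1$ and $x_0x_2$ with the $H(\dv)$-continuity of the normal component across the interior edge $E_2=x_0m_1$, using that $x_0m_1$ is parallel to neither $x_0x_1$ nor $x_0x_2$, to force $\sigma(x_0)=0$ on both sides. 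This non-parallelism is built into the bisection geometry and is invariant under the covariant and Piola transforms, so it may be verified on a reference macro-element if desired.

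With $\sigma$ vanishing at $x_0,x_1,x_2$, continuity at $m_j$ together with the vanishing normal trace along the collinear macro-edge forces $\sigma(m_j)=c_j\bm{t}_j\bm{t}_j^T$ with $\bm{t}_j$ the tangent of that edge; subtracting $\sum_j c_j\left(\trace{\Phi_{m_j}}{M}\right)\bm{t}_j\bm{t}_j^T$, a combination of the generators in (3), yields $\sigma^{(1)}$ vanishing at all six sub-triangulation vertices. Next, at each interior-edge midpoint $d_i$ the $H(\dv)$-condition makes the normal-normal and normal-tangential components of $\sigma^{(1)}$ single-valued, and subtracting the corresponding multiples of $\Phi_{d_i}\bm{n}_i\bm{n}_i^T$ and $\Phi_{d_i}(\bm{t}_i\bm{n}_i^T+\bm{n}_i\bm{t}_i^T)$ from (2) — each of which, being a nodal basis function, is nonzero only at $d_i$ among the nodes — cancels those components without disturbing the other nodes. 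The resulting field $\sigma^{(2)}$ vanishes at all vertices and has vanishing normal trace at every edge midpoint (on $\partial M$ by hypothesis, on the interior edges by construction). Since a vector-valued $P_2$ trace on an edge is determined by its values at the two endpoints and the midpoint, $\sigma^{(2)}\bm{n}$ vanishes on every edge of the sub-triangulation, i.e. $\trace{\sigma^{(2)}}{K}\in\Sigma_{K,2,b}$ for each $K\subset M$ by \eqref{def: Hdiv-bubble}. Hence $\sigma^{(2)}$ is a generator of type (1), and $\sigma$ is recovered as a combination of (1)--(3), proving the reverse inclusion.

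As an independent cross-check of the equality, I would run a node-by-node dimension count on $\Sigma_{\partial M,2,0}^*$: the vertex analysis gives $0$ free parameters at $x_0,x_1,x_2$ and $1$ at each of $m_1,m_2,m_3$; each of the six boundary sub-edge midpoints contributes $1$ (the tangential-tangential entry surviving the zero normal trace); and each of $d_1,d_2,d_3$ contributes $4$ (two shared components plus the two independent tangential-tangential entries). This totals $3+6+12=21$, which matches the $4\cdot 3+6+3=21$ generators in (1)--(3) (using $\dim\Sigma_{K,2,b}=3$), confirming that the two inclusions indeed yield an equality and that the listed generators form a basis of $\Sigma_{M,2,b}$.
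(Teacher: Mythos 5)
Your proof is correct and follows essentially the same route as the paper: first eliminate the values at $m_1,m_2,m_3$ with the type-(3) bubbles $\left(\trace{\Phi_{m_j}}{M}\right)\bm{t}_j\bm{t}_j^T$, then eliminate the interior-edge normal traces (quadratics vanishing at the endpoints of each $E_i$) with the type-(2) bubbles $\Phi_{d_i}\bm{n}_i\bm{n}_i^T$ and $\Phi_{d_i}(\bm{t}_i\bm{n}_i^T+\bm{n}_i\bm{t}_i^T)$, so that the remainder has vanishing normal trace on every sub-element boundary and is of type (1). Your explicit vertex argument at $x_0$ --- combining the zero normal traces on $x_0x_1$ and $x_0x_2$ with normal continuity across $E_2=x_0m_1$ and the non-collinearity of the median --- makes rigorous a detail the paper leaves implicit when it asserts that $(\sigma-\sigma_1)\bm{n}$ vanishes at \emph{both} endpoints of each interior edge (for $E_2$ the endpoint $x_0$ is a macro-vertex where $\sigma$ need not be continuous), and your $21$-dimensional nodal count is a correct independent confirmation of the equality.
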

		\begin{proof}
			The choice of macro-element bubble functions shows $\Sigma_{M,2,b}\subseteq\Sigma_{\partial M,2,0}^*$. It suffices to prove the converse.
			
			{Take any $\sigma\in\Sigma_{\partial M,2,0}^*$.} Note {that} $\sigma$ is continuous at $m_1$ and has vanishing normal component along {the edge} $x_1x_2$. This shows $\sigma(m_1)=a\bm{t}\bm{t}^T$, for some $a\in\R$. Here $\bm{t}$ is the tangential vector of $x_1x_2$. One can use the third kind of macro-element bubble functions in Section \ref{sec: 2Dmacrobubble} to eliminate this non-vanishing tangential-tangential component, i.e. $$\left(\sigma-a\Phi_{m_1}\bm{t}\bm{t}^T\right)(m_1)=0.$$Similar arguments also hold for $m_2$ and $m_3$. {This means {that} there exists $\sigma_1\in\Sigma_{M,2,b}$ such that 
				\begin{equation*}
				\left(\sigma-\sigma_1\right)(m_j)=0,\quad1\leqslant j\leqslant 3.
				\end{equation*}
				
				Note that $\left(\sigma-\sigma_1\right)\bm{n}$ on each interior edge $E_i$ is a quadratic function vanishing at two endpoints of this edge. Here $\bm{n}$ is the unit normal vector of $E_i$, with $1\leqslant i\leqslant3$. The second kind of macro-element bubble functions in Section \ref{sec: 2Dmacrobubble} can be used to eliminate this non-vanishing normal component, i.e. there exists $\sigma_2\in\Sigma_{M,2,b}$ such that $$\left(\sigma-\sigma_1-\sigma_2\right)\bm{n}=0,\quad\text{on $E_i$, with $1\leqslant i\leqslant 3.$}$$This implies that $\sigma-\sigma_1-\sigma_2$ is a piecewise bubble function on $M$ and $\sigma-\sigma_1-\sigma_2\in\Sigma_{M,2,b}$.  Therefore $\sigma\in\Sigma_{M,2,b}$.} This concludes the proof.

		\end{proof}

		%

		\begin{lemma}
			{It holds} $J{U_h}\subseteq\Sigma_{2,h}$.
			\label{Lem: inclusion}
		\end{lemma}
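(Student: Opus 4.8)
The plan is to check the three requirements for $Jq$ to lie in $\Sigma_{2,h}$ for an arbitrary $q\in U_h$: that $Jq$ is a symmetric $H(\dv)$-field, that $\trace{Jq}{K}\in P_2(K;\Sym)$ on every sub-element $K$, and that $Jq$ splits as $\sigma_c+\sigma_b$ with $\sigma_c\in H^1(\Omega;\Sym)$ and $\sigma_b$ a macro-element bubble. The last two points carry the content; the second is immediate since $\trace{q}{K}\in P_4(K)$, and a direct computation gives $\dv Jq=0$. The essential tool is the pair of normal-trace identities $\bm{n}^TJq\,\bm{n}=\partial_{\bm{tt}}^2q$ and $\bm{t}^TJq\,\bm{n}=-\partial_{\bm{tn}}^2q$ on any edge with unit tangent $\bm{t}$ and unit normal $\bm{n}$, obtained by writing $Jq$ in the $(\bm{t},\bm{n})$ frame.

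First I would establish $Jq\in H(\dv,\Omega;\Sym)$, i.e. the continuity of $Jq\,\bm{n}$ across every edge. Across an edge interior to a macro-element $M$, Lemma \ref{C1} yields $\trace{q}{M}\in C^1(M)$, so $\partial_{\bm{t}}q$ and $\partial_{\bm{n}}q$ are continuous across the edge; differentiating tangentially gives continuity of $\partial_{\bm{tt}}^2q=\partial_{\bm{t}}(\partial_{\bm{t}}q)$ and $\partial_{\bm{tn}}^2q=\partial_{\bm{t}}(\partial_{\bm{n}}q)$, hence of $Jq\,\bm{n}$ by the identities above. Across a macro-element edge I would first promote $U_h$ to a globally $C^1$ space: on a shared edge the $C^2$ data at the two endpoints together with the continuity of $\partial_{\bm{n}}u,\partial_{\bm{tn}}^2u,\partial_{\bm{tt}}^2u$ at the midpoint form a unisolvent Hermite set for the trace of $u$ (a $C^2$ piecewise-quartic on two sub-segments) and for the trace of $\partial_{\bm{n}}u$ (a $C^1$ piecewise-cubic), so both traces agree from the two sides and $U_h\subset C^1(\Omega)$; the same tangential-differentiation argument then gives continuity of $Jq\,\bm{n}$ there as well.

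For the decomposition I would exploit that $q$ is $C^2$ at every vertex, so $\nabla^2q$ and hence $Jq$ is continuous as a full tensor at all vertices, and that $\nabla^2q$ is continuous at each macro-edge midpoint $m_j$ as seen from within $M$ (shown in the proof of Lemma \ref{C1}), so $Jq$ is continuous at $m_1,m_2,m_3$ within $M$. I would then define $\sigma_c$ to be the globally continuous, piecewise-$P_2$ symmetric field that equals $Jq$ at every vertex and whose components $\bm{n}^T\sigma_c\bm{n}$ and $\bm{t}^T\sigma_c\bm{n}$ along each macro-element edge coincide with those of $Jq$; this is possible precisely because $Jq$ is continuous at the vertices and $Jq\,\bm{n}$ is single-valued along macro-edges, while the tangential-tangential nodal values of $\sigma_c$ remain free and may be fixed so as to keep $\sigma_c$ single-valued. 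Setting $\sigma_b:=Jq-\sigma_c$, one checks that $\sigma_b$ is piecewise $P_2$, has $\sigma_b\bm{n}=0$ on each $\partial M$ (the normal traces were matched), and is continuous at $m_1,m_2,m_3$ (both $Jq$ and $\sigma_c$ are). Lemma \ref{Lem: macrobubble} then identifies $\trace{\sigma_b}{M}\in\Sigma_{M,2,b}$, giving $Jq=\sigma_c+\sigma_b\in\Sigma_{2,h}$.

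The main obstacle is the construction in the third paragraph: one must verify that the continuity actually available for $Jq$ (the full tensor at vertices and interior midpoints, but only the $\bm{n}\bm{n}$- and $\bm{n}\bm{t}$-components across edges) is exactly enough to split off a globally continuous $\sigma_c$ whose complement lands in the bubble space of Lemma \ref{Lem: macrobubble}. Concretely, the only discontinuities of $Jq$ sit in the tangential-tangential component, and one has to confirm both that these are absorbed by $\sigma_b$ and that matching the normal trace of $\sigma_c$ to $Jq\,\bm{n}$ along macro-edges is compatible with global single-valuedness of $\sigma_c$ at the shared nodes.
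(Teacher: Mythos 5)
Your proposal is correct and follows essentially the same route as the paper: both establish $Jq\in H(\dv,\Omega;\Sym)$ with $Jq\,\bm{n}$ single-valued on macro-element edges and $Jq$ continuous at macro-element vertices (and at the midpoints $m_j$ from within each $M$), then subtract a globally continuous piecewise-$P_2$ field $\sigma_c$ matching $Jq$ at the vertices and matching the normal trace along macro-edges, and identify the remainder as a macro-element bubble via Lemma~\ref{Lem: macrobubble}. The only difference is that you re-derive the inter-macro-element $C^1$-conformity of $U_h$ by a one-dimensional unisolvence argument on shared edges, whereas the paper simply invokes $U_h\subset H^2(\Omega)$, which is built into the construction of $U_h$ (via Lemma~\ref{C1} and Theorem~\ref{unisolvence}); your resolution of the ``main obstacle''---matching normal traces nodally while leaving the tangential-tangential values free and single-valued---is exactly the paper's construction of $\sigma_1$.
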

		\begin{proof}
			Given $u\in {U_h}$, let $\sigma=Ju$. Then $\trace{\sigma}{K}\in P_2\left(K;\Sym\right)$ for each $K\subset M$ and $M\in\M_h$. {Note that ${U_h}\subset H^2\left(\Omega\right)$ and $J$ maps $H^2\left(\Omega\right)$ to $H\left(\dv,\Omega;\Sym\right)$. These facts imply $\sigma\in H\left(\dv,\Omega;\Sym\right)$ and $\sigma \bm{n}$ is continuous across $\partial M$. Besides, the $C^2$ continuity of ${U_h}$ at macro-element vertices implies $\sigma$ is continuous at macro-element vertices.}


			{The continuity of $\sigma$ implies there exists $\sigma_1\in H^1\left(\Omega;\Sym\right)$ and $\trace{\tau}{K}\in P_2\left(K;\Sym\right)$ on each element $K$, such that $\sigma_1=\sigma$ at macro-element vertices and $\sigma_1\bm{n}=\sigma\bm{n}$ at three interior nodes of each macro-element edge $E$. This  {yields} that $\trace{\sigma_1\bm{n}}{E}=\trace{\sigma\bm{n}}{E}$.} Let $\sigma_2=\sigma-\sigma_1$, then on each macro-element $M$, $\trace{\sigma_2}{M}\in\Sigma_{\partial M,2,0}^*$. Lemma \ref{Lem: macrobubble} shows $\trace{\sigma_2}{M}\in\Sigma_{M,2,b}$. This shows $\sigma=\sigma_1+\sigma_2\in\Sigma_{2,h}$.
		\end{proof}
		
		\begin{theorem}
			The sequence \eqref{discrete_sequence} is exact.
		\end{theorem}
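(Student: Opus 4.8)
The plan is to prove exactness at each of the three nontrivial spots separately, after first observing that \eqref{discrete_sequence} is a complex: the inclusion $P_1(\Omega)\subset U_h$ is clear, since linear polynomials are globally smooth and restrict to $P_1\subset P_4$ on every sub-element, and $\dv J=0$ holds as a differential identity ($\dv(Jq)=0$ for any $q\in H^2(\Omega)$ by direct computation). Exactness at $U_h$ amounts to $\ker(J|_{U_h})=P_1(\Omega)$: since $Jq=0$ forces $\nabla^2q=0$, i.e. $q\in P_1(\Omega)$, and $P_1(\Omega)\subset U_h$, this is immediate. Exactness at $V_{2,h}$ is the surjectivity of $\dv\colon\Sigma_{2,h}\to V_{2,h}$, which is exactly what the proof of Theorem~\ref{Thm: inf-sup-2D} delivers: for every $v_h\in V_{2,h}$ it produces $\tau_h=I_h\tau_1+\tau_2\in\Sigma_{2,h}$ with $\dv\tau_h=v_h$.

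The remaining and principal task is exactness at $\Sigma_{2,h}$, i.e. $JU_h=\ker(\dv|_{\Sigma_{2,h}})$. The inclusion $JU_h\subseteq\ker\dv$ follows from Lemma~\ref{Lem: inclusion} together with $\dv J=0$. For the converse I would take $\sigma\in\Sigma_{2,h}$ with $\dv\sigma=0$ and invoke the exactness of the continuous Sobolev elasticity sequence on the (simply connected) domain $\Omega$ to write $\sigma=Ju$ with $u\in H^2(\Omega)$, unique up to $P_1(\Omega)$. Since $\sigma$ is piecewise $P_2$ and the entries of $Ju$ are the second derivatives of $u$, the potential $u$ is piecewise $P_4$; being simultaneously in $H^2(\Omega)$ and piecewise polynomial, $u$ lies in $C^1(\Omega)$. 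It then remains to upgrade this $C^1$ regularity to the pointwise $C^2$ conditions defining $U_h$.

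The crux is to read off the extra continuity of $\nabla^2u$ from the structure of $\Sigma_{2,h}$. Writing $\sigma=\sigma_c+\sigma_b$ with $\sigma_c\in H^1(\Omega;\Sym)$ and $\sigma_b|_M\in\Sigma_{M,2,b}$, the part $\sigma_c$ is globally continuous, while $\sigma_b\in H_0(\dv,M;\Sym)$ vanishes at every macro-element vertex (two non-parallel edge normals there force the tensor to vanish) and, by Lemma~\ref{Lem: macrobubble} (the identification $\Sigma_{M,2,b}=\Sigma_{\partial M,2,0}^*$), is continuous at each edge midpoint $m_j$. Hence $\sigma$ itself is continuous at all macro-element vertices and at all midpoints $m_j$, so $\nabla^2u$ is continuous there; combined with $u\in C^1(\Omega)$ this makes $u$ of class $C^2$ at every macro-element vertex and at every $m_j$. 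Using the Airy identities $\bm{n}^T(Ju)\bm{n}=\partial_{\bm{tt}}^2u$ and $\bm{t}^T(Ju)\bm{n}=-\partial_{\bm{tn}}^2u$ together with $u\in C^1$, these are precisely the defining continuity conditions of $U_M$ and of the global space $U_h$, whence $u\in U_h$ and $\sigma=Ju\in JU_h$.

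The step I expect to be the main obstacle is this last verification that the recovered potential lands in $U_h$: the continuous exactness only supplies $u\in H^2(\Omega)$, and one must translate the $H(\dv)$-continuity and the vertex/midpoint continuity of $\sigma\in\Sigma_{2,h}$ into the $C^2$-type conditions of $U_M$. The essential inputs are the elementary fact that an $H^2$ piecewise polynomial is automatically $C^1$, the observation that $\sigma\bm{n}$-continuity across interior edges is automatic for $C^1$ potentials (so the decisive information is the \emph{pointwise} continuity of the full tensor $\sigma$ at macro-vertices and midpoints furnished by Lemma~\ref{Lem: macrobubble}), and the Airy relations above. Once $u\in U_h$ is confirmed, collecting the three exactness statements completes the proof; as a cross-check, the middle exactness could alternatively be obtained from a dimension count verifying $\dim U_h-\dim\Sigma_{2,h}+\dim V_{2,h}=3$ via rank--nullity for $J$ and $\dv$.
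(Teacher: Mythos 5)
Your proof is correct in substance but takes a genuinely different route from the paper. The paper never identifies $\ker(\dv|_{\Sigma_{2,h}})$ directly: it combines the inclusion $JU_h\subseteq\Sigma_{2,h}$ (Lemma~\ref{Lem: inclusion}), the fact $\ker(J|_{U_h})=P_1(\Omega)$, and the surjectivity $\dv\Sigma_{2,h}=V_{2,h}$ from Theorem~\ref{Thm: inf-sup-2D} with a pure dimension count: exactness is reduced to the inequality $\dim U_h-\dim\Sigma_{2,h}+\dim V_{2,h}\geqslant3$, which is then verified by counting degrees of freedom and invoking Euler's formula $1+\#E=\#V+\#M$ and the edge identity $3\#M=\#E+\#E^i$. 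You instead prove the middle exactness constructively: recover an Airy potential $u\in H^2(\Omega)$ from the continuous sequence, observe that $u$ is piecewise $P_4$ and hence $C^1$, and transfer the pointwise continuity of $\sigma$ (via the decomposition $\sigma=\sigma_c+\sigma_b$, Lemma~\ref{Lem: macrobubble}, and the identities $\bm{n}^T(Ju)\bm{n}=\partial_{\bm{tt}}^2u$, $\bm{t}^T(Ju)\bm{n}=-\partial_{\bm{tn}}^2u$) into the defining conditions of $U_h$. Your route yields the explicit characterization $\ker(\dv|_{\Sigma_{2,h}})=JU_h$ without any counting (and in fact shows the alternating dimension sum equals $3$ exactly, where the paper only needs an inequality), at the price of delicate continuity bookkeeping; both arguments implicitly require $\Omega$ simply connected (you through the continuous exactness, the paper through Euler's formula).

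Two local points in your argument need repair, though neither is fatal. First, your parenthetical justification that $\sigma_b$ vanishes at every macro-element vertex (``two non-parallel edge normals there force the tensor to vanish'') is valid at $x_1$ and $x_2$, where a single sub-element ($K_1$, resp.\ $K_4$) occupies the corner, but fails at $x_0$: there the interior edge $E_2=m_1x_0$ ends, the two sub-elements $K_2$ and $K_3$ meet, and each sees only one boundary edge of $\partial M$ at $x_0$, so the corner argument gives only one vanishing normal component per polynomial piece. The claim itself remains true, but you should verify it from the explicit bubble basis of Section~\ref{sec: 2Dmacrobubble}: the element bubbles vanish at all vertices of their sub-element, and the functions $\Phi_{d_i}$ and $\Phi_{m_i}$ vanish at all macro vertices, so every element of $\Sigma_{M,2,b}$ vanishes at $x_0$ as well. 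Second, your assertion that $u$ is ``$C^2$ at every $m_j$'' holds only macro-element-wise: across the macro edge at $m_j$ the component $\bm{t}^T\sigma\bm{t}=\partial_{\bm{nn}}^2u$ may jump (this is exactly the continuity that the third kind of macro-element bubble relaxes), so globally only $\partial_{\bm{n}}u$, $\partial_{\bm{tn}}^2u$ and $\partial_{\bm{tt}}^2u$ are continuous there --- which, as you then correctly extract via the Airy identities and the $H(\dv)$-continuity of $\sigma\bm{n}$, is precisely what the definition of $U_h$ requires, so the conclusion $u\in U_h$ stands once these two justifications are stated carefully.
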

		\begin{proof}
			

			{Theorem~\ref{Thm: inf-sup-2D}} leads to $\dv\Sigma_{2,h}=V_{2,h}$. This implies {that} the discrete sequence \eqref{discrete_sequence} is exact if $\dim {U_h}-\dim\Sigma_{2,h}+\dim V_{2,h}\geqslant3$. Let $\#M$ be the number of the macro-elements. Let $\#V$, $\#E$ be the number of the vertices, edges of the macro-elements, respectively. Let $\#E^i$ be the number of the interior macro-element edges.


			The dimension of $\Sigma_{2,h}$ can be counted as follows:
			\begin{equation*}
			\begin{aligned}
			\dim\Sigma_{2,h}&\leqslant\left[3\left(\#V+\#E\right)+2\left(2\#E+3\#M\right)+3\cdot4\#M\right]+\#E^i\\
			&=3\#V+7\#E+\#E^i+18\#M.
			\end{aligned}
			\end{equation*}
			Note that $\dim {U_h}=6\#V+3\#E$ and $\dim V_{2,h}=24\#M$. The facts above show
			\begin{equation*}
			\dim {U_h}-\dim\Sigma_{2,h}+\dim V_{2,h}\geqslant3\#V-4\#E-\#E^i+6\#M.
			\end{equation*}
			The inequality above, combined with  {Euler's formula} $1+\#E=\#V+\#M$ and  {the} edge identity $3\#M=\#E+\#E^i$, show
			\begin{equation*}
			\dim {U_h}-\dim\Sigma_{2,h}+\dim V_{2,h}\geqslant3.
			\end{equation*}
			This concludes the proof.

		\end{proof}


		\section{Numerical tests}
		In the computation, the compliance tensor for the homogeneous isotropic  is given by 
		\begin{equation*}
		A\tau=\frac{1}{2\mu}\left(\tau-\frac{\lambda}{2\mu+n\lambda}{\rm tr}(\tau)\delta\right),
		\end{equation*}
		where $\delta$ denotes the identity matrix, and $\mu>0$, $\lambda>0$ are the  {Lam\'{e}} constants.
		\subsection{$k=2$ in 2D}\label{subsec: 2DP2numerics}
		We compute a $2D$ pure displacement problem on the unit square $\Omega=(0,1)^2$ with a homogeneous boundary condition that $u\equiv0$ on $\partial\Omega$. Let $\mu=1/2$ and $\lambda=1$ and the exact solution be
		\begin{align*}
		\begin{aligned}
		u=\begin{pmatrix}
		{\rm exp}(x-y)x(1-x)y(1-y)\\ \sin(\pi x)\sin(\pi y).
		\end{pmatrix}
		\end{aligned}
		\end{align*}
		
		In the computation, the level one mesh shown in Figure~\ref{fig:2Dmac} is obtained by a uniform bisection of the original mesh in Figure~\ref{fig:2Dini}. Each original mesh is refined into a half-sized  mesh uniformly, after that  a uniform  {bisection} leads to the macro-mesh. The convergence results are listed in Table~\ref{table:2Dresult}, which coincides with Theorem~\ref{Thm: 2Destimate}.

		\begin{figure}[h]
			\subcaptionbox{Original mesh.\label{fig:2Dini}}{\begin{tikzpicture}[line width=0.5pt,scale=1.3]
				\coordinate (A) at (0,0);
				\coordinate (B) at (0,2);
				\coordinate (C) at (2,2);
				\coordinate (D) at (2,0);
				
				\draw (A) -- (B) -- (C) -- cycle;
				\draw (A) -- (D) -- (C) -- cycle;

				\end{tikzpicture}
			}\quad\quad\quad\quad\quad\subcaptionbox{Macro-mesh.\label{fig:2Dmac}}{\begin{tikzpicture}[line width=0.5pt,scale=1.3]

				\coordinate (A) at (0,0);
				\coordinate (B) at (0,2);
				\coordinate (C) at (2,2);
				\coordinate (D) at (2,0);
				
				\draw (A) -- (B) -- (C) -- cycle;
				\draw (A) -- (D) -- (C) -- cycle;

				\coordinate (AB) at ($(A)!0.5!(B)$);
				\coordinate (BC) at ($(B)!0.5!(C)$);
				\coordinate (CD) at ($(C)!0.5!(D)$);
				\coordinate (DA) at ($(D)!0.5!(A)$);
				
				\draw (AB) -- (CD);
				\draw (BC) -- (DA);
				\draw (B) -- (D);

				\end{tikzpicture}}
			
			\caption{{Refinement.}}
			\label{fig:2Dexample}
		\end{figure}

		\begin{table}[ht]
			\begin{tabular}{c|cc|cc}
				\hline
				meshsize&$\|\sigma-\sigma_h\|_{L^2(\Omega)}$& rate & $\|u-u_h\|_{L^2(\Omega)}$& rate \\
				\hline
				1     &    0.45246    &         --- & 0.039392     &    ---\\ 
				0.5     &    0.17977     &   1.3316     & 0.019599   & 1.0071 \\
				0.25    &    0.025369  &      2.8251   & 0.0049733 &   1.9785\\ 
				0.125    &   0.0033584   &     2.9172   & 0.0012482  &  1.9943 \\
				0.0625   &   0.00042966   &     2.9665  & 0.00031237 &   1.9985 \\
				\hline
			\end{tabular}
			\caption{The errors for $k=2$ in 2D}
			\label{table:2Dresult}
		\end{table}
	
		\subsection{$k=3$ in 3D}\label{subsec: 3DP3numerics}
		We compute a $3D$ pure displacement problem on the unit cube $\Omega=(0,1)^3$ with a homogeneous boundary condition that $u\equiv0$ on $\partial\Omega$. Let $\mu=1/2$ and $\lambda=1$ and the exact solution be
		\begin{align*}
		\begin{aligned}
		u=\begin{pmatrix}
		2^4\\2^5\\2^6
		\end{pmatrix}x(1-x)y(1-y)z(1-z)
		\end{aligned}
		\end{align*}

		\begin{figure}[h]
			\subcaptionbox{Original mesh. \label{fig:3DP3ini}}{\begin{tikzpicture}[line width=0.5pt,scale=1.3]
				\coordinate (A) at (0,0);
				\coordinate (B) at (0,2);
				\coordinate (C) at (2,2);
				\coordinate (D) at (2,0);
				
				\coordinate (A1) at (0.9,0.3);
				\coordinate (B1) at (0.9,2.3);
				\coordinate (C1) at (2.9,2.3);
				\coordinate (D1) at (2.9,0.3);
				
				\draw (A) -- (B) -- (C) -- (D) -- cycle;
				\draw (C) -- (D) -- (D1) -- (C1) -- cycle;
				\draw (B) -- (B1) -- (C1) -- (C) -- cycle;
				
				\draw (D) -- (C1) -- (B);
				\draw (A) -- (C);
				\draw[dashed] (A) -- (A1) -- (B1);
				\draw[dashed] (A) -- (C1) -- (A1) -- (D1);
				\draw[dashed] (B1) -- (A) -- (D1);

				\end{tikzpicture}
			}\quad\quad\quad\quad\quad\subcaptionbox{Macro-mesh.\label{fig:3DP3mac}}{\begin{tikzpicture}[line width=0.5pt,scale=1.3]

				\coordinate (A) at (0,0);
				\coordinate (B) at (0,2);
				\coordinate (C) at (2,2);
				\coordinate (D) at (2,0);
				
				\coordinate (A1) at (0.9,0.3);
				\coordinate (B1) at (0.9,2.3);
				\coordinate (C1) at (2.9,2.3);
				\coordinate (D1) at (2.9,0.3);
				
				\draw (A) -- (B) -- (C) -- (D) -- cycle;
				\draw (C) -- (D) -- (D1) -- (C1) -- cycle;
				\draw (B) -- (B1) -- (C1) -- (C) -- cycle;
				
				\draw (D) -- (C1) -- (B);
				\draw (A) -- (C);
				\draw[dashed] (A) -- (A1) -- (B1);
				\draw[dashed] (A) -- (C1) -- (A1) -- (D1);
				\draw[dashed] (B1) -- (A) -- (D1);
				
				\coordinate (O) at ($(A)!0.5!(C1)$);
				
				\coordinate (Upper) at ($(B)!0.5!(C1)$);
				\coordinate (Lower) at ($(A)!0.5!(D1)$);
				\coordinate (Left) at ($(A)!0.5!(B1)$);
				\coordinate (Right) at ($(D)!0.5!(C1)$);
				\coordinate (Forward) at ($(A)!0.5!(C)$);
				\coordinate (Backward) at ($(A1)!0.5!(C1)$);

				\draw (B1) -- (C) -- (D1);
				\draw[dashed] (B) -- (A1) -- (D);
				\draw (B) -- (D);
				\draw[dashed] (B1) -- (D1);
				
				\draw[dashed] (B) -- (D1);
				\draw[dashed] (B1) -- (D);
				\draw[dashed] (A1) -- (C);
				
				\draw[dashed] (Upper) -- (Lower);
				\draw[dashed] (Left) -- (Right);
				\draw[dashed] (Forward) -- (Backward);
				
				\draw[color=black, fill=black]  (O) circle (1pt);
				
				\draw[color=black, fill=black]  (Upper) circle (1pt);
				\draw[color=black, fill=black]  (Lower) circle (1pt);
				\draw[color=black, fill=black]  (Left) circle (1pt);
				\draw[color=black, fill=black]  (Right) circle (1pt);
				\draw[color=black, fill=black]  (Forward) circle (1pt);
				\draw[color=black, fill=black]  (Backward) circle (1pt);

				\end{tikzpicture}}
			
			\caption{Refinement.}
			\label{fig:3DP3example}
		\end{figure}

		In the computation, the level one mesh shown in Figure~\ref{fig:3DP3mac} is obtained by dividing each tetrahedron of the original mesh  in Figure~\ref{fig:3DP3ini} into {four} tetrahedra. Each original mesh is refined uniformly, after that dividing as previously leads to the macro-mesh. The convergence results are listed in Table~\ref{table:3DP3result}, which coincides with Theorem~\ref{Thm: 3DP3estimate}.
		\begin{table}[ht]
			\begin{tabular}{c|cc|cc}
				\hline
				meshsize&$\|\sigma-\sigma_h\|_{L^2(\Omega)}$& rate & $\|u-u_h\|_{L^2(\Omega)}$& rate \\
				\hline
				1    &     0.082325     &        ---   &     0.028988      &   --- \\
				0.5   &     0.0051819    &    3.9898 &       0.0040798 &   2.8289 \\
				0.25   &    0.00034209   &      3.921  &     0.00051945 &   2.9735 \\
				
				\hline
			\end{tabular}
			\caption{The errors for $k=3$ in 3D}
			\label{table:3DP3result}
		\end{table}

		{\subsection{$k=2$ in 3D}\label{subsec: 3DP2numerics}
			We compute the same $3D$ problem {as} in Section \ref{subsec: 3DP3numerics}. In the computation, the level one mesh shown in Figure~\ref{fig:3DP2mac} is obtained by dividing each tetrahedron of the original mesh  in Figure~\ref{fig:3DP2ini} into twelve tetrahedra. Each original mesh is refined uniformly, after that dividing as previously leads to the macro-mesh. The convergence results are listed in Table~\ref{table:3DP2result}, {which asymptotically coincides with Theorem~\ref{Thm: 3DP2estimate}.}
			
			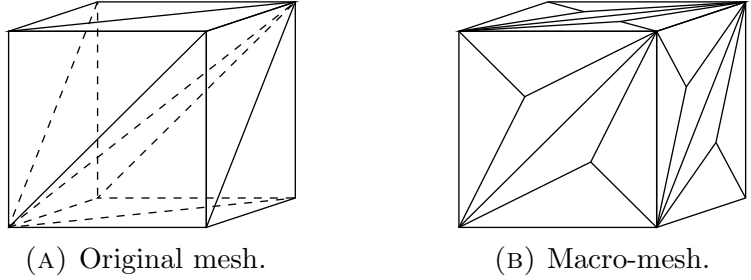
\begin{figure}[h]
				\subcaptionbox{Original mesh. \label{fig:3DP2ini}}{\begin{tikzpicture}[line width=0.5pt,scale=1.3]
					\coordinate (A) at (0,0);
					\coordinate (B) at (0,2);
					\coordinate (C) at (2,2);
					\coordinate (D) at (2,0);
					
					\coordinate (A1) at (0.9,0.3);
					\coordinate (B1) at (0.9,2.3);
					\coordinate (C1) at (2.9,2.3);
					\coordinate (D1) at (2.9,0.3);
					
					\draw (A) -- (B) -- (C) -- (D) -- cycle;
					\draw (C) -- (D) -- (D1) -- (C1) -- cycle;
					\draw (B) -- (B1) -- (C1) -- (C) -- cycle;
					
					\draw (D) -- (C1) -- (B);
					\draw (A) -- (C);
					\draw[dashed] (A) -- (A1) -- (B1);
					\draw[dashed] (A) -- (C1) -- (A1) -- (D1);
					\draw[dashed] (B1) -- (A) -- (D1);

					\end{tikzpicture}
				}\quad\quad\quad\quad\quad\subcaptionbox{Macro-mesh.\label{fig:3DP2mac}}{\begin{tikzpicture}[line width=0.5pt,scale=1.3]

					\coordinate (A) at (0,0);
					\coordinate (B) at (0,2);
					\coordinate (C) at (2,2);
					\coordinate (D) at (2,0);
					
					\coordinate (A1) at (0.9,0.3);
					\coordinate (B1) at (0.9,2.3);
					\coordinate (C1) at (2.9,2.3);
					\coordinate (D1) at (2.9,0.3);
					
					\draw (A) -- (B) -- (C) -- (D) -- cycle;
					\draw (C) -- (D) -- (D1) -- (C1) -- cycle;
					\draw (B) -- (B1) -- (C1) -- (C) -- cycle;
					
					\draw (D) -- (C1) -- (B);
					\draw (A) -- (C);

					\coordinate (m1) at (0.66666666667,1.3333333333333333);
					\draw (A) -- (m1) -- (B);
					\draw (m1) -- (C);
					\coordinate (m2) at (1.3333333333333333,0.66666666667);
					\draw (A) -- (m2) -- (D);
					\draw (m2) -- (C); 		
					\coordinate (m3) at (2.3,1.43333333333333333);
					\draw (D) -- (m3) -- (C1);
					\draw (m3) -- (C);		
					\coordinate (m4) at (2.6,0.8666666666666);
					\draw (D) -- (m4) -- (C1);
					\draw (m4) -- (D1);	 		
					\coordinate (m5) at (1.63333333333333333,2.1);
					\draw (C1) -- (m5) -- (B);
					\draw (m5) -- (C); 	 		
					\coordinate (m6) at (1.26666666666666667,2.2);
					\draw (C1) -- (m6) -- (B);
					\draw (m6) -- (B1);

					%
					%
					%
					%
					%
					%
					%
					%

					\end{tikzpicture}}
				
				\caption{Refinement.}
				\label{fig:3DP2example}
			\end{figure}

			\begin{table}[h]
				\begin{tabular}{c|cc|cc}
					\hline
					meshsize&$\|\sigma-\sigma_h\|_{L^2(\Omega)}$& rate & $\|u-u_h\|_{L^2(\Omega)}$& rate \\
					\hline
					1      &   0.90246        &       ---       &  0.13854      &     ---\\
					0.5  &       0.20592      &    2.1318     &   0.042811    &  1.6942 \\
					0.25    &     0.032968      &    2.6429    &    0.011251   &   1.9279 \\
					
					\hline
				\end{tabular}
				\caption{The errors for $k=2$ in 3D on the macro-mesh.}
				\label{table:3DP2result}
			\end{table}

			{For comparison,} we present the convergence results for the element of degree $k=2$ in Table~\ref{table:3DP2result-ini}. In this case the convergence order of the stress in $L^2$ norm is not optimal. This {justifies} the use of macro-elements to improve the convergence rate.
			
			\begin{table}[h]
				\begin{tabular}{c|cc|cc}
					\hline
					meshsize&$\|\sigma-\sigma_h\|_{L^2(\Omega)}$& rate & $\|u-u_h\|_{L^2(\Omega)}$& rate \\
					\hline
					1     &     1.5784       &         ---       &    0.25691     &      --- \\
					0.5     &      0.39872      &      1.985    &      0.085996 &     1.5789 \\
					0.25        &  0.083743        &   2.2513     &     0.023579   &   1.8668 \\
					0.125     &     0.018084  &         2.2113     &    0.0059988    &  1.9747 \\
					
					\hline
				\end{tabular}
				\caption{The errors for $k=2$ in 3D.}
				\label{table:3DP2result-ini}
			\end{table}

		}
		
		\bibliographystyle{siamplain}
		\bibliography{ref}
		
	\end{document}